\def\al{\alpha} 
\def\be{\beta} 
\def\ga{\gamma}
\def\th{\theta} 
\def\ka{\kappa} 
\def\la{\lambda}
\def\ta{\tau} 
\def\ph{\varphi} 
\def\ps{\psi} 
\def\Ga{\Gamma}
\def\Ph{\Phi} 
\def\Ps{\Psi} 
\def\Om{\Omega}
\def\o{\circ} 
\def\inv{^{-1}} 
\def\x{\times}
\def\p{\partial}
\def\R{{\mathbb R}}
\def\exp{\operatorname{exp}}
\let\on=\operatorname
\let\mb=\mathbb
\let\mc=\mathcal
\newcommand{\ud}{\,\mathrm{d}}
\newtheorem{theorem}[subsection]{Theorem}
\newtheorem{corollary}[subsection]{Corollary}
\newtheorem{lemma}[subsection]{Lemma}
\newtheorem{proposition}[subsection]{Proposition}
\theoremstyle{definition}
\newtheorem{definition}[subsection]{Definition}
\newtheorem{remark}[subsection]{Remark}
\newtheorem*{openquestion*}{Open Question}
\newcommand{\ep}{\varepsilon}
\title[Completeness for Sobolev metrics] 
      {Completeness Properties of Sobolev Metrics on the Space of Curves}
\author[Martins Bruveris]{}
\subjclass{Primary: 58D10; Secondary: 58B20, 53A04, 35A01.}
 \keywords{Immersed curves, Sobolev metrics, completeness, minimizing geodesics, shape space.}
 \email{martins.bruveris@brunel.ac.uk}
\begin{document}
\maketitle

\centerline{\scshape Martins Bruveris }
\medskip
{\footnotesize
 \centerline{Department of Mathematics}
   \centerline{Brunel Unversity London}
   \centerline{Uxbridge UB8 3PH, United Kingdom}
} 

\bigskip

 \centerline{(Communicated by the associate editor name)}

\begin{abstract}
We study completeness properties of Sobolev metrics on the space of immersed curves and on the shape space of unparametrized curves. We show that Sobolev metrics of order $n\geq 2$ are metrically complete on the space $\mc I^n(S^1,\R^d)$ of Sobolev immersions of the same regularity and that any two curves in the same connected component can be joined by a minimizing geodesic. These results then imply that the shape space of unparametrized curves has the structure of a complete length space.
\end{abstract}

\section{Introduction}

The purpose of this paper is to continue the study of completeness properties of Sobolev metrics on the space of closed curves, which was initiated in \cite{Bruveris2014}.
 Sobolev metrics on spaces of curves were introduced independently in \cite{Charpiat2007,Mennucci2007,Michor2006c} and applied to problems in computer vision and shape analysis. They were generalized to immersed higher-dimensional manifolds in \cite{Bauer2011b}. See \cite{Bauer2014} for an overview of their properties and how they relate to other metrics used in shape analysis.

The arguably simplest Riemannian metric on the space $\on{Imm}(S^1,\R^d)$ of smooth closed curves is the $L^2$ metric given by
\[
G_c(h_1,h_2) = \int_{S^1} \langle h_1, h_2 \rangle \ud s\,;
\]
here $c$ is a curve and $h_1,h_2 \in T_c\on{Imm}(S^1,\R^d)$ are tangent vectors. We integrate with respect to arc length, $\ud s = |c'|\ud \th$, in order for the metric to be invariant under the reparametrisation action $(\ph, c) \mapsto c \o \ph$. It was shown in \cite{Michor2005,Bauer2012c} that the geodesic distance induced by the $L^2$-metric vanishes identically, rendering it unsuitable for applications. The quest for stronger metrics has led to the class of almost-local metrics \cite{Shah2008, Michor2006c} as well as the class of Sobolev metrics, which are the object of this work. Sobolev metrics are metrics of the form
\[
G_c(h_1,h_2) = \int_{S^1} a_0 \langle h_1, h_2 \rangle +
a_1 \langle D_s h_1, D_s h_2 \rangle + 
\dots + a_n \langle D_s^n h_1, D_s^n h_2 \rangle \ud s\,,
\]
with $a_j \geq 0$ and $D_s h = h'/|c'|$ denoting differentiation with respect to arc length. For the purposes of this article we will assume that the coefficients $a_j$ are constant.

Sobolev metrics of order $n$ possess various nice properties: the geodesic equation is locally ($n \geq 1$) and globally well-posed ($n \geq 2$); the geodesic distance is nonvanishing ($n \geq 1$) and for some particular metrics geodesics can be computed explicitely. Of particular interest for applications are first order metrics, because they permit geodesics to be computed effectively. The geodesic equation of a Sobolev metric of order $n$ is given by
\begin{multline*}
\p_t \left(\sum_{j=0}^n (-1)^j a_j \,|c'|\, D_s^{2j} c_t\right) = \\
= -\frac{a_0}2 \,|c'|\, D_s\left( \langle c_t, c_t \rangle v \right)
+ \sum_{k=1}^n \sum_{j=1}^{2k-1} (-1)^{k+j} \frac{a_k}{2}\, |c'|\, D_s
\left(\langle D_s^{2k-j} c_t, D_s^j c_t \rangle v \right)\,,
\end{multline*}
and one can see that it is a nonlinear PDE of order $2n$; see \cite{Bruveris2014,Michor2007} for a derivation. First order metrics without an $L^2$-term admit a remarkable transformation that maps immersions modulo translations isometrically to a codimension 2 submanifold of a flat space. This transformations was exploited in \cite{Michor2008a,Bauer2014c,Srivastava2011} to construct efficient numerical methods for computing geodesic distances between curves. Some attempts have been made in \cite{Bauer2014b} to generalize these transformations to higher order Sobolev metrics.

A drawback of first order metrics is that they are not complete. Geodesics can cease to exist after finite time and numerical computations show that geodesics need not exist between two curves. This motivates the study of higher order metrics as was done in \cite{Michor2006c, Mennucci2008,Bruveris2014}.

In particular we focus our attention on completeness properties of Sobolev metrics of order two and higher. For a Riemannian manifold $(M,g)$ there are three notions of completeness.
\begin{enumerate}[(A)]
\item
$(M,\on{dist})$ with the geodesic distance is a complete metric space;
\item
All geodesics can be extended for all time;
\item
Any two points can be joined by a minimizing geodesic.
\end{enumerate}
Property (A) is called metric completeness and (B) is geodesic completeness. In finite dimensions the theorem of Hopf--Rinow asserts that metric and geodesic completeness are equivalent and that either of them implies (C). In infinite dimensions for strong Riemannian manifolds\footnote{An infinite-dimensional Riemannian manifold $(M,g)$ is called \emph{strong}, if $g$ induces the natural topology on each tangent space or equivalently, if the map $g: TM \to (TM)'$ is an isomorphism. If $g$ is merely a smoothly varying nondegenerate bilinear form on $TM$ we call $(M,g)$ a \emph{weak} Riemannian manifold, indicating that the topology induced by $g$ can be weaker than the natural topology on $TM$ or equivalently $g:TM \to (TM)'$ is only injective.} one only has that metric completeness implies geodesic completeness.

It was shown in \cite{Bruveris2014} that $\on{Imm}(S^1,\R^2)$ and $\mc I^n(S^1,\R^2)$, the space of Sobolev immersions of order $n$, are geodesically complete for a Sobolev metric with constant coefficients of order $n\geq 2$. In \cite{Bauer2014_preprint} it is remarked that the same method also implies metric completion of $\mc I^n(S^1,\R^2)$ and \cite{Vialard2014_preprint} shows the existence of minimizing geodesics in $\mc I^n(S^1,\R^2)$. Similar results weere obtained in \cite{Bruveris2014_preprint} for diffeomorphism groups of $\R^d$ and compact manifolds.

We extend the completeness results from plane curves to curves in space and provide a different proof for the existence of minimizing geodesics. We also study the completeness of the quotient space of unparametrized curves.

\subsection{Contributions}

This paper provides a discussion of completeness properties of the spaces of parametrized and unparametrized curves in $\R^d$, equipped with Sobolev metrics. In Sect.~\ref{sec:estimates} we show the main estimate for Sobolev metrics of order $n \geq 2$ with constant coefficients. If $G$ is such a metric on the space $\mc I^n(S^1,\R^d)$ of Sobolev immersions and $B(c_0,r)$ is a metric ball with respect to the induced geodesic distance, then there exists a constant $C = C(c_0,r)$, such that
\[
C\inv \| h \|_{H^n(d\th)} \leq \sqrt{G_c(h,h)} \leq C \| h \|_{H^n(d\th)}
\]
holds for all $c \in B(c_0,r)$. Here $\|\cdot\|_{H^n(d\th)}$ is the inner product defining the topology of $\mc I^n(S^1,\R^d)$. In other words, the inner product defined by $G$ is equivalent to the ambient inner product with a constant that is uniform on metric balls. This is the content of Prop.~\ref{prop:sobolev_uniform_equivalent}, which is a generalization of \cite[Lem. 5.1]{Bruveris2014} from plane curves to curves in $\R^d$. Equivalence is clear for strong Riemannian metrics, the important part is the uniformity of the constant.

The uniform equivalence is used in Sect.~\ref{sec:met_geod_complete} to show that the inequality
\[
\| c_1 - c_2 \|_{H^n(d\th)} \leq C \on{dist}(c_1,c_2)
\]
holds on metric balls with respect to the geodesic distance. Thus, on metric balls, the natural vector space distance on $H^n(S^1,\R^d)$ is Lipschitz with respect to the geodesic distance. This allows us to show that $\mc I^n(S^1,\R^d)$ is metrically and hence geodesically complete, thus extending the result of \cite{Bruveris2014} on geodesic completeness from plane curves to curves in $\R^d$. With an approximation argument we then show in Thm.~\ref{thm:immersion_weak_completion} that the metric completion of the space $\on{Imm}(S^1,\R^d)$ of smooth immersions is equal to $\mc I^n(S^1,\R^d)$. However, since a geodesic with smooth initial conditions remains smooth, the space $\on{Imm}(S^1,\R^d)$ is geodesically complete. This provides a family of geodesically, but not metrically complete (weak) Riemannian manifolds.

In Sect.~\ref{sec:min_geodesics} we show that any two curves in the same connected component  can be connected by a minimizing geodesic. The proof exploits the structure of the arc length differentiation operator $D_s$ to prove a statement about its continuity under weak convergence. The method of proof is different from \cite{Vialard2014_preprint}, which relied instead on reparametrizing curves to constant speed. We also discuss possible extensions of the proof to Sobolev metrics with non-constant coefficients. The question whether the minimizing geodesic joining smooth curves is itself smooth remains open.

We transfer in Sect.~\ref{sec:shape_space} the results from the space of parametrized curves to the shape space of unparametrized curves. Denote by
\[
\mc B^n(S^1,\R^d) = \mc I^n(S^1,\R^d)/ \mc D^n(S^1)\,,
\]
the shape space of unparametrized Sobolev curves. Then $\mc B^n(S^1,\R^d)$ is not a manifold any more, but, equipped with the projection of the geodesic distance, it is a complete metric space. It is also the metric completion of the shape space of smooth immersions,
\[
B_i(S^1,\R^d) = \on{Imm}(S^1,\R^d) / \on{Diff}(S^1)\,.
\]
The distance in $\mc B^n(S^1,\R^d)$ is always realized by geodesics in $\mc I^n(S^1,\R^d)$ in the following sense: given $c_1,c_2 \in \mc I^n(S^1,\R^d)$, there exists $\ps \in \mc D^n(S^1)$, such that
\[
\on{dist}_{\mc B}(\pi(c_1), \pi(c_2)) = \inf_{\ph \in \mc D^n(S^1)} \on{dist}_{\mc I}(c_1, c_2 \o \ph) = \on{dist}_{\mc I}(c_1, c_2 \o \ps)
\]
and $c_1$ and $c_2 \o \ps$ can be joined by a minimizing geodesic. Furthermore $(\mc B^n, \on{dist}_{\mc B})$ carries the structure of a complete length space.

\section{Background Material and Notation}

\subsection{The Space of Curves}

Let $d \geq 1$. The space
\[
\on{Imm}(S^1, \R^d) = \left\{ c \in C^\infty(S^1, \R^d) \,:\, c'(\th) \neq 0 \right\}
\]
of immersions or regular, parametrized curves is an open set in the Fr\'echet space $C^\infty(S^1, \R^d)$ with respect to the $C^\infty$-topology and thus itself a smooth Fr\'echet manifold. For $s \in \R$ and $s > 3/2$ the space
\[
\mc I^s(S^1,\R^d) = \left\{ c \in H^s(S^1,\R^d) \,:\, c'(\th) \neq 0 \right\}
\]
of Sobolev curves of order $s$ is similarly an open subset of $H^s(S^1,\R^d)$ and hence a Hilbert manifold. Because of the Sobolev embedding theorem \cite{Adams2003}, $\mc I^s(S^1,\R^d)$ is well-defined and each curve in $\mc I^s(S^1,\R^d)$ is a $C^1$-immersion. To simplify notation we will sometimes omit the domain and image of the function spaces and write $\on{Imm}$ and $\mc I^s$ for the spaces $\on{Imm}(S^1,\R^d)$ and $\mc I^s(S^1,\R^d)$ respectively.

As open subsets of vector spaces the tangent bundles of the spaces $\on{Imm}(S^1,\R^d)$ and $\mc I^s(S^1,\R^d)$ are trivial,
\begin{align*}
T\on{Imm}(S^1,\R^d) &\cong \on{Imm}(S^1,\R^d) \x C^\infty(S^1,\R^d) \\
T\mc I^s(S^1,\R^d) &\cong \mc I^s(S^1,\R^d) \x H^s(S^1,\R^d)\,.
\end{align*}
From a geometric perspective the tangent space at a curve $c$ consists of vector fields along it, i.e., $T_c \on{Imm} = \Ga(c^\ast T\R^d)$. In the Sobolev case, where $c \in \mc I^s$, the pullback bundle $c^\ast T\R^d$ is not a $C^\infty$-manifold and the tangent space consists of fibre-preserving $H^s$-maps,
\begin{equation*}
T_c \mc I^s(S^1,\R^2) = 
\left\{h \in H^s(S^1,T\R^d): \quad \begin{aligned}\xymatrix{
& T\R^d \ar[d]^{\pi} \\
S^1 \ar[r]^c \ar[ur]^h & \R^d
} \end{aligned} \right\}\,.
\end{equation*}
See \cite{Michor1997,Hamilton1982} for details in the smooth case and \cite{Eells1966, Palais1968} for spaces of Sobolev maps.

For a curve $c \in \mc I^s(S^1,\R^d)$ or $c \in \on{Imm}(S^1,\R^d)$ we denote the parameter by $\th \in S^1$ and differentiation $\p_\th$ by $'$, i.e., $h' = \p_\th h$. Since $c$ is a $C^1$-immersion, the unit-length tangent vector $v = c'/|c'|$ is well-defined. We will denote by $D_s = \p_\th / |c'|$ the derivative with respect to arc length and by $\ud s = |c'| \ud \th$ the integration with respect to arc length. To summarize, we have
\begin{align*}
v &= D_s c\,, &
D_s & = \frac{1}{|c'|} \p_\th\,, &
\ud s & = |c'| \ud \th\,.
\end{align*}
We will write $D_c$ for $D_s$ in Sect.~\ref{sec:min_geodesics} to emphasize the dependence of the arc length derivative on the underlying curve. The length of $c$ is denoted by $\ell_c = \int_{S^1} 1 \ud s$.

\subsection{Sobolev Norms}

In this paper we will only consider Sobolev metrics of integer order. Sometimes it will be necessary to work with Sobolev spaces of fractional order and some of the results, which involve only the topology, are true also for fractional orders. We will denote by $n \in \mb N$ the order of the metric and we will use $s \in \R$, whenever fractional Sobolev orders are allowed or needed.

For $n \geq 1$ we fix the following norm on $H^n(S^1,\R^d)$,
\begin{equation*}
\| h \|^2_{H^n_\th} = \| h \|_{H^n(d\th)}^2 = \int_{S^1} |h(\th)|^2 + |\p_\th^n h(\th)|^2 \ud \th\,.
\end{equation*}
Its counterpart is the $H^n(ds)$-norm
\begin{equation*}
\| h \|_{H^n(ds)}^2 = \int_{S^1} |h(s)|^2 + |D_s^{n}h(s)|^2 \ud s\,,
\end{equation*}
which depends on the curve $c \in \mc I^n(S^1,\R^d)$. The norms $H^n(d\th)$ and $H^n(ds)$ are equivalent, but the constant in the inequalities
\[
C\inv \| h \|_{H^n(d\th)} \leq  \| h \|_{H^n(ds)} \leq C  \| h \|_{H^n(d\th)}
\]
depends on $c$. We will show in Prop.~\ref{prop:sobolev_uniform_equivalent} that if $c$ remains within a certain bounded set, then the constant can be chosen independently of the curve.

The $L^2(d\th)$- and $L^2(ds)$-norms are defined similarly,
\begin{align*}
\| u \|^2_{L^2(d\th)} &= \int_{S^1} |u|^2 \ud \th\,, &
\| u \|^2_{L^2(ds)} = \int_{S^1} |u|^2 \ud s\,,
\end{align*}
and they are related via $\left\| u \sqrt{|c'|} \right\|_{L^2(d\th)} = \| u \|_{L^2(ds)}$.

\subsection{Poincar\'e Inequalities}

The first part of the following lemma is a Sobolev embedding theorem with explicit constants and can be found in \cite{Mennucci2008}. The importance of the last part is that it contains no constant depending on $c$, even though it is a statement about arc length derivatives and the $L^2(ds)$-norms. The proofs can be found in \cite[Lem. 2.14]{Bruveris2014} and \cite[Lem. 2.15]{Bruveris2014}.

\begin{lemma} 
\label{lem:poincare_inequalities}
Let $c \in \mc I^2(S^1,\R^d)$ and $h \in H^1(S^1,\R)$. Then
\[
\| h\|_{L^\infty}^2 \leq \displaystyle\frac 2{\ell_c} \| h \|_{L^2(ds)}^2 + \displaystyle \frac {\ell_c} 2  \| D_s h \|_{L^2(ds)}^2\,,
\]
and if $h \in H^2(S^1,\R)$, then
\[
\| D_s h \|^2_{L^\infty} \leq \frac{\ell_c} 4 \| D_s^2 h \|_{L^2(ds)}^2\,.
\]
If $n \geq 2$, $c \in \mc I^n(S^1,\R^d)$ and $h \in H^n(S^1,\R)$, then for $0 \leq k \leq n$,
\[
\| D_s^k h \|^2_{L^2(ds)} \leq \| h\|^2_{L^2(ds)} + \| D_s^n h \|^2_{L^2(ds)}\,.
\]
\end{lemma}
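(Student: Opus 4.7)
I would prove the three parts separately after reparametrizing $c$ by arc length, so that $S^1$ becomes a circle of circumference $\ell_c$; parts (1) and (2) share a common ``two-arc'' Cauchy--Schwarz trick while (3) reduces to a Fourier-series inequality.

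For (1), first pick $\th_0$ minimizing $h^2$, so that $h(\th_0)^2 \leq \ell_c^{-1}\|h\|_{L^2(ds)}^2$. For any other $\th$, split $S^1$ into the two arcs from $\th_0$ to $\th$, of lengths $a$ and $b = \ell_c - a$. Applying Cauchy--Schwarz to $h(\th) - h(\th_0) = \int D_s h\,\ud s$ on each arc, dividing by $a$ and $b$ respectively, and adding gives $(1/a + 1/b)\,|h(\th) - h(\th_0)|^2 \leq \|D_sh\|_{L^2(ds)}^2$. Since $1/a + 1/b = \ell_c/(ab) \geq 4/\ell_c$, we get $|h(\th) - h(\th_0)|^2 \leq \tfrac{\ell_c}{4}\|D_sh\|_{L^2(ds)}^2$, and then $h(\th)^2 \leq 2h(\th_0)^2 + 2|h(\th) - h(\th_0)|^2$ delivers the stated bound. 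For (2), the same trick applies once one observes that $D_s h$ has zero mean with respect to $\ud s$, since $\int_{S^1} D_s h\,\ud s = \int_{S^1} h'\,\ud\th = 0$ by periodicity; hence $D_s h$ has a zero $\th_0$, and the two-arc argument applied to $D_s h(\th) = \int_{\th_0}^\th D_s^2 h\,\ud s$ immediately gives the claim.

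For (3), expand $h$ in a Fourier series with respect to arc length. Since $D_s^j$ corresponds to multiplying the $m$-th Fourier coefficient by $(2\pi i m/\ell_c)^j$, Plancherel reduces the inequality to the elementary pointwise bound $\lambda^{2k} \leq 1 + \lambda^{2n}$ for $\lambda \geq 0$ and $0 \leq k \leq n$, which is immediate by considering $\lambda \leq 1$ and $\lambda > 1$ separately.

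The main subtlety is getting the constant $\ell_c/4$ (rather than the naive $\ell_c/2$ obtained by integrating over just the shorter arc) in parts (1) and (2); the harmonic-mean bookkeeping across both arcs of $S^1$ is what achieves the sharp value. Part (3) is essentially just Parseval once the elementary scalar inequality is noted.
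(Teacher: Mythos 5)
Your argument is correct and gives exactly the stated constants. Note that the paper does not prove this lemma itself but cites \cite[Lem.~2.14, Lem.~2.15]{Bruveris2014}, so the comparison is with those proofs. For the two $L^\infty$ bounds your route is the standard one (fundamental theorem of calculus for $h$, respectively for $D_s h$ after locating a zero via the mean-zero identity $\int_{S^1} D_s h \ud s = \int_{S^1} h'\ud\th = 0$, plus Cauchy--Schwarz); the harmonic-mean bookkeeping over the two complementary arcs, using $ab \leq \ell_c^2/4$ when $a+b=\ell_c$, is precisely what produces the factors $\ell_c/2$ and $\ell_c/4$, and your splitting $h(\th)^2 \leq 2h(\th_0)^2 + 2|h(\th)-h(\th_0)|^2$ combined with $h(\th_0)^2 \leq \ell_c\inv \|h\|^2_{L^2(ds)}$ reproduces the first inequality verbatim. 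For the third part your Fourier/Plancherel argument differs from the cited proof, which works intrinsically with $D_s$: there one integrates by parts to get $\|D_s^k h\|_{L^2(ds)}^2 \leq \|D_s^{k-1}h\|_{L^2(ds)}\|D_s^{k+1}h\|_{L^2(ds)}$, deduces log-convexity of $k \mapsto \|D_s^k h\|_{L^2(ds)}$, and hence $\|D_s^k h\| \leq \max(\|h\|, \|D_s^n h\|)$. The interpolation route avoids any reparametrization; your route is equally elementary once one grants that passing to the arc-length parameter is harmless, which requires either the fact that composition with an $H^n$-diffeomorphism preserves $H^n$ (for $n > 3/2$; this is the Inci--Kappeler--Topalov result the paper uses elsewhere) or a direct check that the reparametrized function has $n$ weak $L^2$-derivatives. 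That is the only point worth making explicit; otherwise both approaches deliver the same conclusion, and your scalar inequality $\la^{2k} \leq 1 + \la^{2n}$ is exactly the right reduction.
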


\subsection{Gronwall Inequalities}

The following version of Gronwall's inequality can be found in \cite[Thm.~1.3.2]{Pachpatte1998} and \cite{Jones1964}.

\begin{theorem}
\label{thm:gronwall}
Let $A$, $\Ph$, $\Ps$ be real continuous functions defined on $[a,b]$ and $\Ph \geq 0$. We suppose that on $[a,b]$ we have the following inequality
\[
A(t) \leq \Ps(t) + \int_a^t A(s)\Ph(s) \ud s\,.
\]
Then
\[
A(t) \leq \Ps(t) + \int_a^t \Ps(s)\Ph(s) \exp\left(\int_s^t \Ph(u) \ud u \right) \ud s
\]
holds on $[a,b]$.
\end{theorem}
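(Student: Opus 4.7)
The strategy is the classical one: reduce the integral inequality to a linear differential inequality by introducing the auxiliary function $B(t) = \int_a^t A(s)\Ph(s)\ud s$, and then exploit an integrating factor to solve it explicitly.

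First I would observe that, because $A$ and $\Ph$ are continuous, $B$ is of class $C^1$ on $[a,b]$ with $B(a)=0$ and $B'(t)=A(t)\Ph(t)$. Using the hypothesis $A(t)\le \Ps(t)+B(t)$ together with $\Ph\ge 0$, this yields the linear differential inequality
\[
B'(t) - \Ph(t)B(t) \leq \Ps(t)\Ph(t)\,.
\]
Next I would multiply through by the integrating factor $\exp\!\left(-\int_a^t \Ph(u)\ud u\right)$, which rewrites the inequality as
\[
\frac{d}{dt}\!\left[B(t)\exp\!\left(-\int_a^t \Ph(u)\ud u\right)\right]
\leq \Ps(t)\Ph(t)\exp\!\left(-\int_a^t \Ph(u)\ud u\right)\,.
\]
Integrating from $a$ to $t$ (using $B(a)=0$), then multiplying by $\exp\!\left(\int_a^t \Ph(u)\ud u\right)$ and combining the exponentials, produces
\[
B(t) \leq \int_a^t \Ps(s)\Ph(s)\exp\!\left(\int_s^t \Ph(u)\ud u\right)\ud s\,.
\]
Substituting this bound back into $A(t)\le \Ps(t)+B(t)$ yields the claimed inequality.

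I do not foresee a genuine obstacle here, as the argument is entirely classical. The one subtlety worth noting is that $\Ps$ is not assumed to be non-negative or monotone, so one cannot absorb $\Ps$ into $\Ph$ or simplify the right-hand side further; the integrating-factor step nevertheless goes through unchanged for sign-indefinite $\Ps$, and the continuity hypotheses ensure $B$ is genuinely $C^1$ so the differentiation step is valid in the classical sense. An alternative proof would iterate the hypothesis Picard-style, recognising the resulting series as the expansion of the exponential, but the integrating-factor route is shorter and more transparent.
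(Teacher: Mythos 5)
Your proof is correct: the integrating-factor argument for the auxiliary function $B(t)=\int_a^t A(s)\Ph(s)\ud s$ is sound, and the sign of $\Ps$ indeed plays no role since only $\Ph\geq 0$ is used to pass from $A\leq\Ps+B$ to $B'-\Ph B\leq\Ps\Ph$. The paper itself gives no proof, citing Pachpatte and Jones instead, and your argument is precisely the standard one found in those references, so there is nothing further to compare.
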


We will make use of the following corollary.

\begin{corollary}
\label{cor:gronwall_applied}
Let $A$, $G$ be real, continuous functions on $[0,T]$  with $G\geq 0$ and $\al, \be$ nonnegative constants. We suppose that on $[0,T]$ we have the inequality
\[
A(t) \leq A(0) + \int_0^t (\al + \be A(s)) G(s) \ud s\,.
\]
Then
\[
A(t) \leq A(0) + \left(\al + (A(0) + \al N) \be e^{\be N}\right) \int_0^t G(s) \ud s
\]
holds in $[0,T]$ with $N = \int_0^T G(t) \ud t$.
\end{corollary}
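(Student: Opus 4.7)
The plan is to apply Theorem~\ref{thm:gronwall} directly with the identifications $\Ph(t) = \be G(t)$ and $\Ps(t) = A(0) + \al \int_0^t G(s)\ud s$. With these choices, the hypothesis of the corollary becomes exactly the hypothesis of the theorem, so the conclusion gives
\[
A(t) \leq \Ps(t) + \int_0^t \Ps(s) \be G(s) \exp\left(\int_s^t \be G(u) \ud u \right) \ud s\,.
\]
The task is then to massage this into the stated form by replacing the variable terms with constants that depend only on the global quantities $A(0)$, $\al$, $\be$ and $N$.

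First, since $G \geq 0$ and $0 \leq s \leq t \leq T$, one has the monotone bound $\int_s^t \be G(u) \ud u \leq \be N$, so $\exp(\int_s^t \be G(u)\ud u) \leq e^{\be N}$. Second, since $\Ps$ is nondecreasing in $s$ (again using $G \geq 0$), one has $\Ps(s) \leq \Ps(T) \leq A(0) + \al N$ for all $s \in [0,T]$. Substituting these two bounds into the integral term and pulling the resulting constants out yields
\[
A(t) \leq A(0) + \al \int_0^t G(s) \ud s + (A(0) + \al N) \be e^{\be N} \int_0^t G(s) \ud s\,,
\]
which after factoring $\int_0^t G(s)\ud s$ is precisely the stated inequality.

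There is no real obstacle here; the estimate is a mechanical reduction of the Pachpatte form in Theorem~\ref{thm:gronwall} to a form with an affine $\Ps$ and a multiplicative perturbation of $G$. The only point that requires a moment of care is bounding $\Ps(s)$ uniformly on $[0,T]$, which uses nonnegativity of $G$; this is also what forces the constant $N$ to appear inside the exponential in the final estimate, rather than a running integral.
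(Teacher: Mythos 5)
Your proof is correct and is essentially the paper's own argument: the paper likewise applies Theorem~\ref{thm:gronwall} with $\Ps(t) = A(0) + \al\int_0^t G(s)\ud s$ and $\Ph(s)=\be G(s)$, then uses $G\geq 0$ to bound $\int_s^t G(u)\ud u$ by $N$. Your write-up just makes explicit the two uniform bounds ($\Ps(s)\leq A(0)+\al N$ and the exponential by $e^{\be N}$) that the paper leaves implicit.
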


\begin{proof}
Apply the Gronwall inequality with $[a,b]=[0,T]$, $\Ps(t) = A(0) + \al \int_0^t G(s) \ud s$ and $\Ph(s) = \be G(s)$, and note that $G(s) \geq 0$ implies $\int_s^t G(u) \ud u \leq N$.
\end{proof}

\subsection{Continuous Riemannian Metrics}

A Riemannian metric on an (infinite-dimensional) manifold $M$ is a smooth, symmetric, bilinear, non-degenerate map
\[
g : TM \x_M TM \to \R\,.
\]
The induced geodesic distance is defined as
\[
\on{dist}(x,y) = \inf \left\{ L_g(\ga) \,:\, \ga(0) = x,\, \ga(1) = y,\, \ga \text{ piecewise smooth} \right\}\,,
\]
where, using $|v|_x = \sqrt{g_x(v,v)}$ to denote the induced norm,
\[
L_g(\ga) = \int_0^1 |\dot \ga(t)|_{\ga(t)} \ud t
\]
is the length of a path. We shall denote by $B(x,r)$ the open metric ball with respect to the geodesic distance,
\begin{align*}
B(x,r) &= \left\{ y \,:\, \on{dist}(x,y) < r \right\} \\
&= \left\{ \ga(1) \,:\, \ga(0) = x,\, L_g(\ga) < r \right\}\,.
\end{align*}

For some statements about the geodesic distance it is only necessary for $g$ to be a continuous Riemnannian metric; smoothness is not required. To be precise, we call $g$ \emph{weakly continuous}, if the map
\[
g : TM \x_M TM \to \R
\]
is continuous. This is to be contrasted with \emph{strong continuity}, which requires
\[
g : M \to L^2_{\on{sym}}(TM)
\]
to be a continuous section. Continuous Riemannian metrics and their induced geo\-de\-sic distance have been studied in finite dimensions in \cite{Burtscher2013_preprint}.

\subsection{Notation}

We will write
\[
f \lesssim_{A} g
\]
if there exists a constant $C > 0$, possibly depending on $A$, such that the inequality $f \leq C g$ holds.

For a smooth map $F$ from $\mc I^n(S^1,\R^d)$ or $\on{Imm}(S^1,\R^d)$ to any convenient vector space we denote by
\[
D_{c,h} F = \left.\frac{\ud}{\ud t}\right|_{t=0} F(c+th)
\]
the variation in the direction $h$.

\section{Estimates for the Geodesic Distance}
\label{sec:estimates}

In this section we prove estimates relating to the geodesic distance of Riemnnian metrics that are sufficiently strong. The main result will be Prop.~\ref{prop:sobolev_uniform_equivalent} showing that the ambient $H^n(d\th)$-norm and a Sobolev metric of order $n$ are equivalent with uniform constants on metric balls. This section extends the results of \cite{Bruveris2014} from plane curves to curves in $\R^d$.

We will make the following assumption on the Riemannian metric $G$ on the space $\on{Imm}(S^1,\R^d)$ for the rest of the section.

\begin{equation} \tag{$H_n$}
\label{hyp:hyp_compare_sobolev}
\begin{minipage}{0.9\textwidth}
Given a metric ball $B(c_0,r)$ in $\on{Imm}(S^1,\R^d)$, there exists a constant $C$, such that
\[
\| h \|^2_{H^n(ds)} = \int_{S^1} |h|^2 + |D_s^n h|^2 \ud s \leq C G_c(h,h)
\]
holds for all $c \in B(c_0,r)$ and all $h \in T_c\on{Imm}(S^1,\R^d)$.
\end{minipage}
\end{equation}

Note in particular that the class of metrics satisfying \eqref{hyp:hyp_compare_sobolev} includes Sobolev metrics with constant coefficients. Furthermore, Lem.~\ref{lem:poincare_inequalities} shows that if the metric $G$ satisfies \eqref{hyp:hyp_compare_sobolev}, then it also satisfies $(H_k)$ with $k \leq n$. To simplify the exposition we will work with smooth curves for now and extend the results to Sobolev immersions in Rem.~\ref{rem:extend_results}. First we collect some results from \cite{Bruveris2014}.

\begin{proposition}
\label{prop:lipschitz_basic_results}
Let $n \geq 2$ and $G$ be a weakly continuous Riemannian metric on $\on{Imm}(S^1,\R^d)$ satisfying \eqref{hyp:hyp_compare_sobolev}.
Then the following functions are continuous and Lipschitz continuous on every metric ball,
\begin{align*}
\log |c'| &: \big(\on{Imm}(S^1,\R^d), \on{dist}^G\big) \to L^\infty(S^1,\R)\,, \\
\ell_c^{1/2}, \ell_c^{-1/2} &: \big(\on{Imm}(S^1,\R^d), \on{dist}^G\big) \to \R_{>0}\,.
\end{align*}
In particular the following expressions are bounded on every metric ball
\[
\| c' \|_{L^\infty}, \left\| |c'|\inv \right\|_{L^\infty}, \ell_c, \ell_c\inv\,.
\]
Furthermore, the norms $L^2(d\th)$ and $L^2(ds)$ are uniformly equivalent on every metric ball, i.e., given a metric ball $B(c_0,r)$ in $\on{Imm}(S^1,\R^d)$, there exists a constant $C$, such that
\[
C\inv \| h \|_{L^2(d\th)} \leq \| h \|_{L^2(ds)} \leq C \| h \|_{L^2(d\th)}
\] 
holds for all $c \in B(c_0,r)$ and all $h \in L^2(S^1)$.
\end{proposition}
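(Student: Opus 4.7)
The plan is a bootstrap in which each successive bound on a geometric quantity of the curve is deduced from the previous ones; no Gronwall iteration is needed, only direct estimation along smooth paths. Fixing a metric ball $B(c_0,r)$, I would take a smooth path $\ga : [0,1] \to \on{Imm}(S^1,\R^d)$ with $\ga(0) = c_0$, $\ga(1) = c$, and $L_G(\ga) < r$, write $c(t) = \ga(t)$, $c_t = \p_t c$, $|c_t|_{c(t)} = \sqrt{G_{c(t)}(c_t,c_t)}$, estimate the $\p_t$-derivative of each scalar quantity along $\ga$, and integrate, afterwards taking the infimum over such paths.

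First I would prove Lipschitz continuity of $\ell_c^{1/2}$. From $\p_t |c'| = \langle D_s c_t, v \rangle |c'|$ one has $\p_t \ell_c = \int_{S^1} \langle D_s c_t, v\rangle \ud s$, so Cauchy--Schwarz gives
\[
\left| \p_t \ell_c^{1/2} \right| \;=\; \frac{|\p_t \ell_c|}{2 \ell_c^{1/2}} \;\leq\; \tfrac{1}{2} \| D_s c_t \|_{L^2(ds)}.
\]
Combining the third inequality of Lemma \ref{lem:poincare_inequalities} (applied componentwise with $k=1$) with \eqref{hyp:hyp_compare_sobolev} bounds the right hand side by $C |c_t|_{c(t)}$, with $C$ independent of $c$. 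Integration along $\ga$ then yields Lipschitz continuity of $\ell_c^{1/2}$ with respect to $\on{dist}^G$, and in particular the uniform upper bound $\ell_c \leq (\ell_{c_0}^{1/2} + Cr)^2$ on $B(c_0,r)$.

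Next I would tackle $\log |c'|$. Since $\p_t \log|c'| = \langle D_s c_t, v \rangle$, the second Poincar\'e inequality of Lemma \ref{lem:poincare_inequalities} gives $\| \p_t \log|c'| \|_{L^\infty} \leq \tfrac{\ell_c^{1/2}}{2} \| D_s^2 c_t \|_{L^2(ds)}$, and the just-obtained bound on $\ell_c$ together with \eqref{hyp:hyp_compare_sobolev} (available because $n \geq 2$) converts this into $\| \p_t \log|c'| \|_{L^\infty} \leq C |c_t|_{c(t)}$. Integration delivers Lipschitz continuity of $\log |c'| : (\on{Imm}, \on{dist}^G) \to L^\infty$, and the pointwise identity $|c'| = |c_0'| \exp\bigl(\log|c'| - \log|c_0'|\bigr)$ then turns this into uniform bounds $\|c'\|_{L^\infty}, \| |c'|^{-1} \|_{L^\infty} \lesssim_{c_0,r} 1$ on $B(c_0,r)$.

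The remaining items are essentially algebraic. A pointwise lower bound on $|c'|$ gives $\ell_c \geq 2\pi \| |c'|^{-1} \|_{L^\infty}^{-1}$, hence a uniform positive lower bound on $\ell_c$; combining this with the Lipschitz property of $\ell_c^{1/2}$ through the identity $\ell_c^{-1/2} - \ell_{c_0}^{-1/2} = (\ell_{c_0}^{1/2} - \ell_c^{1/2})/(\ell_c^{1/2} \ell_{c_0}^{1/2})$ yields Lipschitz continuity of $\ell_c^{-1/2}$. Finally, $\| u \|_{L^2(ds)}^2 = \int_{S^1} |u|^2 |c'| \ud\th$ sandwiched between the $L^\infty$-bounds on $|c'|$ and $|c'|^{-1}$ gives the uniform equivalence of $L^2(d\th)$ and $L^2(ds)$. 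The main delicacy is the ordering: the only handle on $\|D_s c_t\|_{L^\infty}$ carries an $\ell_c^{1/2}$ factor, so $\ell_c$ must be controlled before $\log|c'|$ can be attacked, which forces one to start from the estimate for $\p_t \ell_c^{1/2}$, which is the one computation dimensionless in $\ell_c$.
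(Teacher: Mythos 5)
Your proposal is correct, and it takes a genuinely different route from the paper: the paper's proof of the Lipschitz claims for $\ell_c^{\pm 1/2}$ and $\log|c'|$ is essentially a citation to \cite{Bruveris2014} (Cor.~4.2, Lem.~4.4, Lem.~4.10) together with the observation that those arguments carry over from $d=2$ to $\R^d$ and from global to ball-local constants, whereas you reconstruct the estimates from scratch. Your bootstrap is sound: the identity $\p_t|c'| = \langle D_s c_t, v\rangle\,|c'|$ gives $|\p_t \ell_c^{1/2}| \leq \tfrac12 \|D_s c_t\|_{L^2(ds)}$, which the third inequality of Lem.~\ref{lem:poincare_inequalities} and \eqref{hyp:hyp_compare_sobolev} bound by $C\,|c_t|_{c(t)}$ with no curve-dependent factor, so $\ell_c$ must indeed come first; only then does the $L^\infty$-estimate $\|\p_t \log|c'|\|_{L^\infty} \leq \tfrac{\ell_c^{1/2}}{2}\|D_s^2 c_t\|_{L^2(ds)}$ close up. Because your derivative bounds do not involve the function being estimated on the right-hand side, you genuinely avoid the Gronwall step that the paper's general principle (Lem.~\ref{lem:loc_lipschitz_result}) is built around; what you lose is the reusable black box, what you gain is a self-contained two-page argument. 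Two small points you should make explicit: (i) to get the Lipschitz constant for an arbitrary pair $c_1,c_2 \in B(c_0,r)$, and not just for pairs $(c_0,c)$, you must run the integration along near-minimizing paths between $c_1$ and $c_2$, which stay only in $B(c_0,3r)$, so the constants from \eqref{hyp:hyp_compare_sobolev} and from your first bootstrap step must be taken for the enlarged ball (the paper records exactly this $3r$ bookkeeping after Lem.~\ref{lem:loc_lipschitz_result}); (ii) the first Poincar\'e inequality is not needed, but the second requires $c_t \in H^2$, which holds since you work with piecewise smooth paths in $\on{Imm}(S^1,\R^d)$. The final $L^2(d\th)$--$L^2(ds)$ equivalence via the sandwich $\min|c'| \cdot \|h\|^2_{L^2(d\th)} \leq \|h\|^2_{L^2(ds)} \leq \|c'\|_{L^\infty}\|h\|^2_{L^2(d\th)}$ is identical to the paper's.
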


\begin{proof}
The Lipschitz continuity of $\ell_c^{1/2}$ and $\ell_c^{-1/2}$ is shown in \cite[Cor. 4.2]{Bruveris2014} and \cite[Lem. 4.4]{Bruveris2014} and the Lipschitz continuity of $\log |c'|$ in \cite[Lem. 4.10]{Bruveris2014}. The results there are formulated under slightly more restrictive hypotheses: it is assumed that $G$ is globally stronger than the $H^n(ds)$-norm with a constant, that does not depend on the choice of a metric ball and that $d=2$, i.e., for plane curves. Since all the arguments only consider paths, that lie in some metric ball, the constant $C$ in~\eqref{hyp:hyp_compare_sobolev} can also depend on the ball and the variational formulae in these proofs are valid for curves in $\R^d$ without a change. The equivalence of the norms $L^2(d\th)$ and $L^2(ds)$ follows from
\[
\left( \min_{\th \in S^1} |c'(\th)| \right) \| h \|^2_{L^2(d\th)}
\leq \| h \|^2_{L^2(ds)} \leq
\| c'\|_{L^\infty} \| h \|^2_{L^2(d\th)}\,,
\]
and the boundedness of $|c'(\th)|$ from above and below on a metric ball.
\end{proof}

The following lemma encapsulates a general principle for proving the Lipschitz continuity of functions with respect to the geodesic distance.

\begin{lemma}
\label{lem:loc_lipschitz_result}
Let $(M,g)$ be a Riemannian manifold with a weakly continuous metric and $f : M \to F$ a $C^1$-function into a normed space $F$. Assume that for each metric ball $B(y,r)$ in $M$ there exists a constant $C$, such that
\begin{equation}
\label{eq:derivative_bound}
| T_x f.v |_F \leq C \left(1 + |f(x)|_F\right) |v|_x
\end{equation}
holds for all $x \in B(y,r)$ and all $v \in T_x M$. Then the function
\[
f : (M, d) \to (F, |\cdot|_F)
\]
is continuous and Lipschitz continuous on every metric ball. In particular $f$ is bounded on every metric ball.

If the constant $C$ can be chosen such that \eqref{eq:derivative_bound} holds globally for $x \in M$, then $f$ is globally Lipschitz continuous.
\end{lemma}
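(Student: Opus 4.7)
The strategy is to integrate the derivative bound along piecewise $C^1$ paths and apply the Gronwall inequality of Corollary~\ref{cor:gronwall_applied} to control the growth of $|f|_F$, then pass to the infimum to turn a length bound into a geodesic-distance bound.

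For any piecewise $C^1$ path $\gamma:[0,1] \to M$ that lies in a region where the hypothesis holds with some constant $C$, the composition $f \circ \gamma$ is a $C^1$-curve in the normed space $F$, so the fundamental theorem of calculus and the assumed bound give
\[
A(t) := |f(\gamma(t))|_F \leq A(0) + \int_0^t C\bigl(1 + A(s)\bigr)|\dot\gamma(s)|_{\gamma(s)} \ud s.
\]
Applying Corollary~\ref{cor:gronwall_applied} with $\al = \be = C$ and $G(s) = |\dot\gamma(s)|_{\gamma(s)}$ (so that $N = L_g(\gamma)$) yields a pointwise upper bound on $A(t)$ depending only on $A(0)$, $C$, and the length $L_g(\gamma)$.

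For the local statement, I first prove boundedness of $f$ on metric balls and then use that boundedness to extract a uniform Lipschitz constant. Fix $B(y,r)$ and choose the constant $C$ from the hypothesis applied to the slightly enlarged ball $B(y,r+1)$. For any $x \in B(y,r)$ and any path $\ga$ from $y$ to $x$ of length less than $r$, the entire image of $\ga$ lies in $B(y,r+1)$, since $\on{dist}(y,\ga(t))\leq L_g(\ga|_{[0,t]})$. The Gronwall estimate above then bounds $|f(x)|_F$ by a constant $K$ depending only on $|f(y)|_F$, $C$, and $r$, so $f$ is bounded on $B(y,r)$. For the Lipschitz estimate, take $x_0, x_1 \in B(y,r)$ and a path $\ga$ from $x_0$ to $x_1$ with $L_g(\ga) < \on{dist}(x_0,x_1) + \ep < 2r + \ep$. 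This path lies in $B(y, 3r+1)$, where I invoke the hypothesis to get a constant $C'$, and the previously established uniform bound $|f(\ga(t))|_F \leq K'$ on that larger ball. Substituting into the derivative bound and integrating,
\[
|f(x_1) - f(x_0)|_F \leq \int_0^1 C'(1+K')|\dot\ga(t)|_{\ga(t)} \ud t = C'(1+K') L_g(\ga),
\]
and letting $\ep \to 0$ and taking the infimum over paths delivers the Lipschitz inequality with constant $C'(1+K')$ depending only on $B(y,r)$. Continuity is then immediate, since every point of $M$ lies in some metric ball.

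For the global statement, the same Gronwall argument applies directly without the enlargement step, since the constant $C$ is no longer tied to a particular ball, and one obtains a bound depending only on $C$ and the universal quantities in the hypothesis. The main obstacle in the whole argument is the bootstrap character of the estimate: the Lipschitz bound on $B(y,r)$ requires a uniform bound on $|f|_F$, which must first be established by a separate application of the same Gronwall machinery on a slightly enlarged ball. Careful choice of the enlargement ensures that the final constants depend only on the original ball $B(y,r)$ and the reference value $|f(y)|_F$.
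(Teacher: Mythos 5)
Your proof is correct and follows essentially the same route as the paper: integrate the derivative bound along a path, control the growth of $|f|_F$ via the Gronwall inequality of Cor.~\ref{cor:gronwall_applied}, keep the path inside an enlarged metric ball ($3r$ versus your $3r+1$), and bootstrap off the reference value $|f(y)|_F$ at the centre. The only difference is organisational --- you establish boundedness of $f$ on balls first and then deduce the Lipschitz estimate by direct integration, whereas the paper derives the Lipschitz-type bound with an $|f(x_1)|_F$-dependent constant first and removes that dependence afterwards by specialising $x_2=y$ --- and your treatment of the global case is exactly as brief as the paper's.
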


By carefully following the proof, it is possible to find explicit values for the Lipschitz constant. We will not need the explicit values and so we only note that the Lipschitz constant of $f$ on the ball $B(y,r)$ will depend on the constant $C$ for the ball $B(y,3r)$.

\begin{proof}
Fix a metric ball $B(y,r)$ and two points $x_1, x_2 \in B(y,r)$. Then $d(x_1,x_2) < 2r$ and we can choose a piecewise smooth path $x(t)$ connecting $x_1$ and $x_2$ with $L_g(x) < 2r$. Then $d(y, x(t)) < 3r$ and thus the path $x$ remains within a metric ball of radius $3r$ around $y$.

Starting from
\[
f(x(t)) - f(x_1) = \int_0^t T_{x(\ta)} f.\dot x(\ta) \ud \ta\,,
\]
we obtain
\[
\left|f(x(t)) - f(x_1)\right|_F \leq \int_0^t \left| T_{x(\ta)} f.\dot x(\ta)\right|_F \ud \ta \lesssim_{y,r}
\int_0^t \big(1 + \left|f(x(\ta))\right|_F\big) |\dot x(\ta)|_{x(\ta)} \ud \ta\,,
\]
and by setting
\[
A(t) = \left|f(x(t)) - f(x_1)\right|_F\,,
\]
we can rewrite the above inequality as
\[
A(t) \lesssim_{y,r} \int_0^t \big(1 + |f(x_1)|_F + A(t)\big) |\dot x(\ta)|_{x(\ta)} \ud \ta\,.
\]
Using Gronwall's inequality Cor.~\ref{cor:gronwall_applied} this leads to
\[
A(t) \lesssim_{y,r} \big( 1 + |f(x_1)|_F \big) \int_0^t |\dot x(\ta)|_{x(\ta)} \ud \ta
\leq \big( 1 + |f(x_1)|_F \big) L_g(x)\,.
\]
Taking the infimum over all paths $x$ between $x_1$ and $x_2$ we obtain almost the required inequality,
\[
\left|f(x_1) - f(x_2)\right|_F \lesssim_{y,r} ( 1 + |f(x_1)|_F )\, d(x_1, x_2)\,.
\]
To remove the dependence on $|f(x_1)|_F$ on the right hand side, we use the inequality with $x_2=y$ as follows,
\[
|f(x_1)|_F \leq |f(y)|_F + |f(x_1) - f(y)|_F
\lesssim_{y,r} (1 + r) |f(y)|_F + r\,.
\]
This concludes the proof.
\end{proof}

The next lemma is a preparation to prove Prop.~\ref{prop:things_lipschitz}. We need to calculate the variations of $D_s^k c$ and $D_s^k |c'|$. In fact we are only interested in the terms of highest order and collect the rest in the polynomials $P$ and $Q$.

\begin{lemma}
\label{lem:variation_Dsk_c}
Let $c \in \on{Imm}(S^1,\R^d)$, $h \in T_c \on{Imm}(S^1,\R^d)$ and $k \geq 0$. Then
\begin{align*}
D_{c,h} \left( D_s ^k c \right) &= D_s^k h - \langle D_s^k h, v \rangle v - k \langle D_s h, v \rangle D_s^k c - \langle D_s h, D_s^k c \rangle v +{} \\
&\qquad\qquad+ P(D_s c, \dots, D_s^{k-1}c; D_s h,\dots,D_s^{k-1} h) \\
D_{c,h} \left( D_s^k |c'| \right) &= \langle D_s^{k+1} h, v \rangle |c'|
- (k-1) \langle D_s h, v \rangle D_s^k |c'| + \langle D_s h, D_s^{k+1} c \rangle |c'| +{} \\
&\qquad\qquad+ Q(|c'|,\dots,D_s^{k-1}|c'|, D_s c, \dots,D_s^k c; D_s h,\dots
D_s^k h)
\end{align*}
and $P(\dots)$ and $Q(\dots)$ are polynomials in the respective variables and linear in the components of $D_s h, \dots, D_s^k h$.
\end{lemma}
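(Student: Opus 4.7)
The proof reduces to two elementary variational identities, after which everything is a careful application of the Leibniz rule. Starting from $|c'|^2=\langle c',c'\rangle$ one gets
\[
D_{c,h}|c'| = \frac{\langle c',h'\rangle}{|c'|} = |c'|\langle D_s h, v\rangle,
\]
and then from $D_s = |c'|^{-1}\p_\th$ the operator identity
\[
D_{c,h} D_s = -\langle D_s h, v\rangle\, D_s.
\]
These are the only places the geometry of $\on{Imm}$ enters; everything else follows from unwinding $D_s^k$ into $k$ composed copies of $D_s$ and applying these two identities together with Leibniz. Linearity of $P$ and $Q$ in the $h$-derivatives is automatic from the linearity of $D_{c,h}$, and the polynomial structure of the residuals is maintained inductively.

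For the first formula I would induct on $k$, writing $D_s^k c = D_s(D_s^{k-1} c)$. The product rule yields
\[
D_{c,h}(D_s^k c) = -\langle D_s h, v\rangle\, D_s^k c + D_s\bigl(D_{c,h}(D_s^{k-1} c)\bigr).
\]
Substituting the inductive hypothesis and distributing $D_s$, each of the four top-order terms at level $k$ appears exactly once: $D_s^k h$ from differentiating $D_s^{k-1} h$; $-\langle D_s^k h, v\rangle v$ from $D_s$ landing on $D_s^{k-1} h$ inside $-\langle D_s^{k-1} h, v\rangle v$; $-\langle D_s h, D_s^k c\rangle v$ from $D_s$ landing on $D_s^{k-1} c$ inside $-\langle D_s h, D_s^{k-1} c\rangle v$; and $-(k-1)\langle D_s h, v\rangle D_s^k c$ from $D_s$ landing on $D_s^{k-1} c$ inside $-(k-1)\langle D_s h, v\rangle D_s^{k-1} c$. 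The last of these combines with the standalone $-\langle D_s h, v\rangle D_s^k c$ to produce the required coefficient $-k$. All remaining contributions, including $D_s P_{k-1}$, only involve $D_s^{\le k-1} c$ and $D_s^{\le k-1} h$ and are linear in the latter, so they may be absorbed into the new $P_k$.

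The same scheme handles $D_s^k|c'|$. Unwinding the recursion gives
\[
D_{c,h}(D_s^k|c'|) = -\sum_{j=0}^{k-1} D_s^j\bigl(\langle D_s h, v\rangle D_s^{k-j}|c'|\bigr) + D_s^k\bigl(|c'|\langle D_s h, v\rangle\bigr).
\]
In the first sum, the $i=0$ Leibniz term of each $D_s^j$ produces $-\langle D_s h, v\rangle D_s^k|c'|$, summing to $-k\langle D_s h, v\rangle D_s^k|c'|$; in the second expression, the $i=k$ Leibniz term yields $+\langle D_s h, v\rangle D_s^k|c'|$. Their combination is the claimed $-(k-1)\langle D_s h, v\rangle D_s^k|c'|$. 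The leading $h$-term $\langle D_s^{k+1} h, v\rangle|c'|$ arises from the $i=0$ term in $D_s^k(|c'|\langle D_s h, v\rangle)$ with every $D_s$ falling on $D_s h$; and $\langle D_s h, D_s^{k+1} c\rangle|c'|$ arises from the same $i=0$ term with every $D_s$ falling on $v$, using $D_s^k v = D_s^{k+1} c$. Every other Leibniz term involves only lower-order derivatives of $c$, $|c'|$, and $h$ and is collected into $Q_k$.

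The only real difficulty is the combinatorial bookkeeping, and the counts $-k$ and $-(k-1)$ come out precisely because $D_s^k$ is a composition of $k$ copies of $D_s$, so $D_{c,h} D_s^k$ contributes $k$ rank-one perturbations that, in the $|c'|$ case, combine with a single cancelling top-order Leibniz term. There is no analytic subtlety beyond careful index tracking.
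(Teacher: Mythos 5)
Your proposal is correct and rests on exactly the same ingredients as the paper's proof: the variation identities $D_{c,h}|c'| = |c'|\langle D_s h, v\rangle$ and $D_{c,h}(D_s) = -\langle D_s h, v\rangle D_s$, followed by Leibniz bookkeeping of the top-order terms, with the treatment of $D_s^k|c'|$ being essentially verbatim the paper's argument. The only cosmetic difference is that for $D_s^k c$ you run an induction via $D_s^k = D_s \circ D_s^{k-1}$ (correctly noting that $D_s P_{k-1}$ stays within the admissible form for $P_k$), whereas the paper expands the full sum $-\sum_{j=0}^{k-1} D_s^j\bigl(\langle D_s h, v\rangle D_s^{k-j}c\bigr)$ in closed form using the binomial identity $\sum_{j=i}^{k-1}\binom{j}{i}=\binom{k}{i+1}$ and then reads off the highest-order terms; both routes yield the stated coefficients.
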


\begin{proof}
We have
\[
D_{c,h}(D_s^k) = - \sum_{j=0}^{k-1} D_s^j \o \langle D_sh,v \rangle \o D_s^{k-j}\,,
\]
and thus
\[
D_{c,h} \left(D_s^k c \right) = D_s^k h - \sum_{j=0}^{k-1} D_s^j \big( \langle D_s h, v\rangle D_s^{k-j} c \big)\,.
\]
Next we use the identity \cite[(26.3.7)]{NIST2010},
\[
\sum_{j=i}^{k-1} \binom{j}{i} = \binom{k}{i+1} \,,
\]
and the product rule for differentiation to obtain
\begin{align*}
D_{c,h} \left(D_s^k c \right) &= D_s^k h - \sum_{j=0}^{k-1} \sum_{i=0}^j \binom{j}{i} D_s^i \langle D_s h, v\rangle D_s^{k-j+j-i} c \\
&= D_s^k h - \sum_{i=0}^{k-1} \sum_{j=i}^{k-1} \binom{j}{i} D_s^i \langle D_s h, v\rangle D_s^{k-i} c\\
&= D_s^k h - \sum_{i=0}^{k-1} \binom{k}{i+1} D_s^i \langle D_s h, v\rangle D_s^{k-i} c\,.
\end{align*}
It is clear that the expression is linear in $h$. It remains to isolate the terms involving derivatives of order $k$. These are
\begin{multline*}
D_s^k h - \binom{k}{1} \langle D_s h, v \rangle D_s^k c - \binom{k}{k} \langle D_s^k h, v \rangle D_s c - \binom{k}{k} \langle D_s h, D_s^{k-1} v \rangle D_s c = \\
= D_s^k h - \langle D_s^k h, v \rangle v - k \langle D_s h, v \rangle D_s^k c -
\langle D_s h, D_s^k c\rangle D_s c\,,
\end{multline*}
thus proving the first formula. For the second one we have
\begin{align*}
D_{c,h} \left( D_s^k |c'| \right) &= D_s^k \left( \langle D_s h, v \rangle |c'|\right)
- \sum_{j=0}^{k-1} D_s^j \big( \langle D_s h, v\rangle D_s^{k-j} |c'| \big) \\
&= D_s^k \left( \langle D_s h, v \rangle |c'|\right) -
\sum_{i=0}^{k-1} \binom{k}{i+1} D_s^i \langle D_s h, v\rangle D_s^{k-i} |c'|\,.
\end{align*}
The terms involving $k+1$ derivatives are
\[
\langle D_s^{k+1} h, v \rangle |c'| + \langle D_s h, D_s^{k+1} c \rangle |c'|
+ \langle D_s h, v\rangle D_s^k|c'| - k \langle D_s h, v \rangle D_s^k |c'|
\]
and the remaining terms can be collected in the polynomial $Q(\dots)$.
\end{proof}

This result can be seen as the generalization of \cite[Thm. 4.7]{Bruveris2014} to curves in $\R^d$. It is the main tool to prove Prop.~\ref{prop:sobolev_uniform_equivalent}.

\begin{proposition}
\label{prop:things_lipschitz}
Let $n \geq 2$ and $G$ be a weakly continuous Riemannian metric on $\on{Imm}(S^1,\R^d)$ satisfying \eqref{hyp:hyp_compare_sobolev}. 

Then the following functions are continuous and Lipschitz continuous on every metric ball,
\begin{align*}
D_s^k c &: \big(\on{Imm}(S^1,\R^d), \on{dist}^G\big) \to L^2(S^1,\R^d)\,,
&& 0 \leq k \leq n\,, \\
D_s^k c &: \big(\on{Imm}(S^1,\R^d), \on{dist}^G\big) \to L^\infty(S^1,\R^d)\,,
&& 0 \leq k \leq n-1\,, \\
D_s^k |c'| &: \big(\on{Imm}(S^1,\R^d), \on{dist}^G\big) \to L^2(S^1,\R)\,,
&& 0 \leq k \leq n-1\,, \\
D_s^k |c'| &: \big(\on{Imm}(S^1,\R^d), \on{dist}^G\big) \to L^\infty(S^1,\R)\,,
&& 0 \leq k \leq n-2\,.
\end{align*}
In particular the following expressions
\begin{align*}
\| c \|_{L^\infty}, \dots, \| D_s^{n-1} c \|_{L^\infty}, \big\| |c'| \big\|_{L^\infty},
\dots, \big\| D_s^{n-2} |c'| \big\|_{L^\infty} \\
\| D_s^n c \|_{L^2(d\th)}, \| D_s^n c \|_{L^2(ds)}, \| c \|_{H^n(ds)}, \big \| D_s^{n-1} |c'| \big\|_{L^2(d\th)}
\end{align*}
are bounded on every metric ball. 
\end{proposition}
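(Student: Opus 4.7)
The plan is to deduce each of the eight estimates by a single application of Lem \ref{lem:loc_lipschitz_result}, with $f(c) = D_s^k c$ or $f(c) = D_s^k |c'|$ into the appropriate target space $F \in \{L^2, L^\infty\}$, and with the derivative $T_{c,h}f$ provided by the variation formulas of Lem \ref{lem:variation_Dsk_c}. In each case I need to verify the pointwise derivative bound
\[
|T_{c,h}f|_F \leq C(c_0,r)\bigl(1 + |f(c)|_F\bigr)\sqrt{G_c(h,h)}
\]
on an arbitrary metric ball $B(c_0,r)$. Lipschitz continuity then follows directly from Lem \ref{lem:loc_lipschitz_result}, and boundedness on metric balls from Lipschitz continuity. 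The final $L^2(d\th)$-statements reduce to their $L^2(ds)$-counterparts via the uniform equivalence already established in Prop \ref{prop:lipschitz_basic_results}.

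As a preparatory step I would first record that, on any metric ball, the arc-length derivatives of a tangent vector $h$ are uniformly controlled by $G$: namely
\[
\| D_s^j h \|_{L^2(ds)}^2 \lesssim_{c_0,r} G_c(h,h) \quad (0 \leq j \leq n)
\]
follows from \eqref{hyp:hyp_compare_sobolev} combined with the constant-free inequality in Lem \ref{lem:poincare_inequalities}, while
\[
\| D_s^j h \|_{L^\infty}^2 \lesssim_{c_0,r} G_c(h,h) \quad (0 \leq j \leq n-1)
\]
follows by applying the Sobolev-type estimate in Lem \ref{lem:poincare_inequalities} to $D_s^j h$ and invoking Prop \ref{prop:lipschitz_basic_results} to bound the resulting factors of $\ell_c$ and $\ell_c^{-1}$.

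With these estimates in hand, the eight statements are proved by induction on $k$, handling $D_s^k c$ first and then $D_s^k |c'|$. For the variation of $D_s^k c$ one reads off from Lem \ref{lem:variation_Dsk_c} that every leading term is of the form $(\text{uniformly bounded factor})\cdot D_s^j h$ with $j\leq k$, so its $L^2(ds)$-norm is $\lesssim_{c_0,r} \sqrt{G_c(h,h)}$; the only term that is not already controlled in this way is $-k\langle D_s h, v\rangle D_s^k c$, which contributes the allowed extra factor $\|D_s^k c\|_{L^2(ds)} = |f(c)|_F$. The remainder $P$ is a sum of products of $D_s^j c$ for $j\leq k-1$ (bounded in $L^\infty$ by the inductive hypothesis, together with Prop \ref{prop:lipschitz_basic_results} for $j=0,1$) with one factor $D_s^j h$ for $j\leq k-1$, whose $L^2(ds)$-norm is controlled by the preparatory step. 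The $L^\infty$-version of the bound for $D_s^k c$ is identical, except that it requires $k\leq n-1$ so that $\|D_s^k h\|_{L^\infty}\lesssim \sqrt{G_c(h,h)}$. The variation of $D_s^k |c'|$ is treated analogously, using that $\langle D_s h, D_s^{k+1} c\rangle |c'|$ is controlled in $L^2(ds)$ provided $k+1\leq n$, respectively in $L^\infty$ provided $k+1\leq n-1$; the polynomial $Q$ involves only $D_s^j |c'|$ for $j\leq k-1$ and $D_s^j c$ for $j\leq k$, all already handled at previous stages of the induction.

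The single genuine difficulty in this argument is organisational rather than analytic: one must verify for each of the eight target estimates that the appearance of $D_s^j c$ and $D_s^j |c'|$ inside $P$ or $Q$ is always of strictly lower order than the quantity currently being estimated, so that the induction is not circular, and that the hypothesis $(H_n)$ with $n \geq 2$ provides enough Poincaré room to pass from $L^2(ds)$-control of $D_s^n h$ down to $L^\infty$-control of $D_s^{n-1} h$. Once this bookkeeping is set up, each individual estimate is a routine combination of Lem \ref{lem:variation_Dsk_c}, Lem \ref{lem:poincare_inequalities}, and Prop \ref{prop:lipschitz_basic_results}.
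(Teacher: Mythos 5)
Your proposal is correct and follows essentially the same route as the paper: apply Lem.~\ref{lem:loc_lipschitz_result} with $f = D_s^k c$ or $D_s^k|c'|$, use the variational formulas of Lem.~\ref{lem:variation_Dsk_c} together with the Poincar\'e inequalities of Lem.~\ref{lem:poincare_inequalities} and the uniform bounds of Prop.~\ref{prop:lipschitz_basic_results} to verify the derivative bound $\lesssim (1+|f(c)|_F)\sqrt{G_c(h,h)}$, and induct on $k$ treating $D_s^k c$ before $D_s^k|c'|$. The only difference is that you make the preparatory estimates on $\|D_s^j h\|_{L^2(ds)}$ and $\|D_s^j h\|_{L^\infty}$ and the non-circularity of the induction explicit, which the paper leaves implicit.
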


\begin{proof}
Fix a metric ball $B(c_0,r)$. We will use Lem.~\ref{lem:loc_lipschitz_result} to establish the proposition. Let us start with the Lipschitz continuity of $D_s^k c$ in the $L^\infty$-norm. We fix $n$ and proceed via induction on $k$. For $k=0$ we have $D_{c,h} c = h$ and via
\[
\|h\|_{L^\infty} \lesssim_{d,R} \| h \|_{L^2(ds)} + \| D_s h \|_{L^2(ds)} \lesssim_{d,R} \sqrt{G_c(h,h)}\,,
\]
we are done. For $k=1$ we similarly have
\[
D_{c,h} (D_s c) = D_s h - \langle D_s h, v \rangle v\,,
\]
and
\[
\| D_s h - \langle D_s h, v \rangle v  \|_{L^\infty} \leq \| D_s h \|_{L^\infty} \lesssim_{d,r} \| D_s^2 h \|_{L^2(ds)} \lesssim_{d,R} \sqrt{G_c(h,h)}\,.
\]
For the induction step assume $2 \leq k \leq n-1$ and that the result has been established for $k-1$. Then $\| D_s^j h\|_{L^\infty}$ is bounded on metric balls for $0 \leq j \leq k-1$ and we can estimate using Lem.~\ref{lem:variation_Dsk_c},
\begin{align*}
\left\| D_{c,h} \left( D_s ^k c \right) \right\|_{L^\infty} &\leq 
\left\| D_s^k h \right\|_{L^\infty} + (k+1) \left\| D_s h \right\|_{L^\infty} \left\| D_s^k c \right\|_{L^\infty} +{} \\
&\qquad\qquad+ \left\| P(D_s c, \dots, D_s^{k-1}c; D_s h,\dots,D_s^{k-1} h)\right\|_{L^\infty} \\
&\lesssim_{d,R} \left( 1 + \| D_s^k c \|_{L^\infty} \right) \sqrt{G_c(h,h)}\,,
\end{align*}
since $P$ is linear in $h$. Via Lem.~\ref{lem:loc_lipschitz_result} this concludes the proof of the $L^\infty$-continuity of $D_s^k c$.

Next we show the $L^2(d\th)$-continuity of $D_s^k c$ for $0 \leq k \leq n$. Again via Lem.~\ref{lem:variation_Dsk_c} we have
\begin{align*}
\left\| D_{c,h} \left( D_s ^k c \right) \right\|_{L^2(d\th)} &\leq 
\left\| D_s^k h \right\|_{L^2(d\th)} + (k+1) \left\| D_s h \right\|_{L^\infty} \left\| D_s^k c \right\|_{L^2(d\th)} +{} \\
&\qquad\qquad+ \left\| P(D_s c, \dots, D_s^{k-1}c; D_s h,\dots,D_s^{k-1} h)\right\|_{L^2(d\th)} 
\end{align*}
Since $c, D_s c, \dots,  D_s^{n-1} c$ are bounded in the $L^\infty$-norm, we can bound $P(\dots)$ in the $L^\infty$-norm by
\[
\left\| P(D_s c, \dots, D_s^{k-1}c; D_s h,\dots,D_s^{k-1} h)\right\|_{L^\infty}
\lesssim_{d,R} \sqrt{G_c(h,h)}\,,
\]
and thus
\[
\left\| D_{c,h} \left( D_s ^k c \right) \right\|_{L^2(d\th)} \lesssim_{d,R} \left( 1 + \| D_s^k c \|_{L^2(d\th)} \right) \sqrt{G_c(h,h)}\,.
\]
Now apply Lem.~\ref{lem:loc_lipschitz_result}.

The Lipschitz continuity of $D_s^k|c'|$ in the $L^\infty$- and $L^2(d\th)$-norms can be shown in exactly the same way, using the second part of Lem.~\ref{lem:variation_Dsk_c}; note that since $k \leq n-1$, all the terms involving $D_s c,\dots D_s^k c$ in $Q(\dots)$ are bounded on metric balls in the $L^\infty$-norm and thus can be effectively ignored.
\end{proof}

This is the main result of the section and it will be essential to show metric completeness of Sobolev metrics.

\begin{proposition}
\label{prop:sobolev_uniform_equivalent}
Let $n \geq 2$ and $G$ be a weakly continuous Riemannian metric on $\on{Imm}(S^1,\R^d)$ satisfying \eqref{hyp:hyp_compare_sobolev}.

Then, given a metric ball $B(c_0,r)$ in $\on{Imm}(S^1,\R^d)$, there exists a constant $C$, such that
\[
C\inv \| h \|_{H^n(d\th)} \leq \| h \|_{H^n(ds)} \leq C \| h \|_{H^n(d\th)}
\]
holds for all $c \in B(c_0,r)$ and all $h \in H^n(S^1,\R)$.
\end{proposition}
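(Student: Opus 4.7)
The plan is to compare the two norms term by term, using the algebraic identity $\partial_\theta = |c'|\,D_s$ to expand $\partial_\theta^n$ and $D_s^n$ in terms of each other, and then to apply the bounds supplied by Propositions~\ref{prop:lipschitz_basic_results} and~\ref{prop:things_lipschitz}. Concretely, I first establish by induction on $n$ (or by a direct Fa\`a~di~Bruno-style expansion) the identity
\[
\partial_\theta^n h = |c'|^n D_s^n h + \sum_{k=1}^{n-1} R_{n,k}\bigl(|c'|, D_s|c'|, \ldots, D_s^{n-k}|c'|\bigr)\, D_s^k h,
\]
where each $R_{n,k}$ is a polynomial in its arguments. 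A weight-bookkeeping argument shows that in every monomial of the expansion the order $k$ of the derivative on $h$ plus the total order of all $D_s^j|c'|$ factors equals exactly $n$; in particular $D_s^{n-1}|c'|$ can only appear linearly and only inside $R_{n,1}$. This separates the right-hand side into the principal term $|c'|^n D_s^n h$, a collection of \emph{safe} lower-order terms whose coefficients involve only $D_s^j|c'|$ with $j \leq n-2$, and one \emph{critical} term of the form $c_n\, |c'|^{n-1}\bigl(D_s^{n-1}|c'|\bigr)\, D_s h$.

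Fix a metric ball $B(c_0,r)$ and estimate each contribution in $L^2(d\theta)$. The principal term satisfies $\bigl\||c'|^n D_s^n h\bigr\|_{L^2(d\theta)} \lesssim_{c_0,r} \|D_s^n h\|_{L^2(ds)}$, since $\|\,|c'|\,\|_{L^\infty}$ and $\|\,|c'|^{-1}\,\|_{L^\infty}$ are uniformly bounded on the ball by Proposition~\ref{prop:lipschitz_basic_results}. The safe lower-order terms are handled by pulling the $D_s^j|c'|$ factors into the $L^\infty$-norm, which is bounded by Proposition~\ref{prop:things_lipschitz} since $j \leq n-2$, and then controlling the remaining $\|D_s^k h\|_{L^2(ds)}$ by $\|h\|_{H^n(ds)}$ via the third part of Lemma~\ref{lem:poincare_inequalities}. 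The critical term is the main subtlety: since $D_s^{n-1}|c'|$ is controlled only in $L^2(d\theta)$, I pair it with $\|D_s h\|_{L^\infty}$, which the second part of Lemma~\ref{lem:poincare_inequalities} bounds by $\tfrac12\ell_c^{1/2}\|D_s^2 h\|_{L^2(ds)} \leq C\|h\|_{H^n(ds)}$; this is precisely where the hypothesis $n \geq 2$ is used. Summing the contributions yields $\|h\|_{H^n(d\theta)} \leq C\|h\|_{H^n(ds)}$ on $B(c_0,r)$.

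The reverse inequality $\|h\|_{H^n(ds)} \leq C\|h\|_{H^n(d\theta)}$ follows by the same argument applied to the dual identity $D_s = |c'|^{-1}\partial_\theta$, which yields an analogous polynomial expansion of $D_s^n h$ in terms of $\partial_\theta^k h$. After rewriting any $\partial_\theta^j|c'|$ coefficients that arise as polynomials in $D_s^j|c'|$ via the first identity, the $L^\infty$-bound on $|c'|^{-1}$ from Proposition~\ref{prop:lipschitz_basic_results} plays the role previously played by the bound on $|c'|$, and the same pairing of $D_s^{n-1}|c'|$ with a Sobolev-embedded derivative of $h$ handles the critical term. The main obstacle I anticipate is the combinatorial bookkeeping that confines $D_s^{n-1}|c'|$ to a single linear slot; once that is clean, the assumption $n \geq 2$ together with Lemma~\ref{lem:poincare_inequalities} and the $L^2(d\theta)$--$L^2(ds)$ equivalence of Proposition~\ref{prop:lipschitz_basic_results} reduces every remaining estimate to a routine application of H\"older's inequality.
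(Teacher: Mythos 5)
Your argument is correct and follows essentially the same route as the proof the paper defers to (\cite[Lem.~5.1]{Bruveris2014}): expand $\p_\th^n$ and $D_s^n$ in terms of each other, observe that the weight count confines $D_s^{n-1}|c'|$ to a single linear slot multiplying $D_s h$, and then combine the $L^\infty$-bounds on $D_s^j|c'|$ for $j\le n-2$ and the $L^2(d\th)$-bound on $D_s^{n-1}|c'|$ from Prop.~\ref{prop:things_lipschitz} with the embeddings of Lem.~\ref{lem:poincare_inequalities} and the uniform $L^2(d\th)$--$L^2(ds)$ equivalence of Prop.~\ref{prop:lipschitz_basic_results}. Your treatment of the critical term, and your identification of $n\ge 2$ as the point where Lem.~\ref{lem:poincare_inequalities} is needed, match the cited proof; no gap.
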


The proof of this proposition can be found in \cite[Lem. 5.1]{Bruveris2014} for plane curves. The proof can be reused without change for curves in $\R^d$, if we refer to Prop.~\ref{prop:things_lipschitz} to obtain boundedness of $D_s^k |c'|$ on metric balls, where necessary. 

If $G$ is a Sobolev metric of order $n \geq 2$ with constant coefficients, then Lem.~\ref{lem:poincare_inequalities} shows that the norm induced by $G_c(\cdot,\cdot)$ is equivalent to the $H^n(ds)$-norm with a uniform constant, i.e., there exists $C_1$, such that
\[
C_1\inv \| h \|_{H^n(ds)} \leq \sqrt{G_c(h,h)} \leq C_1 \| h \|_{H^n(ds)}
\]
holds for all $c \in \on{Imm}(S^1,\R^d)$ and all $h \in H^n(S^1,\R)$. Hence the the norm induced by $G_c(\cdot,\cdot)$ is also equivalent to the ambient $H^n(d\th)$-norm with uniform constants on every metric ball.

\begin{remark}
\label{rem:extend_results}
Let $n \geq 2$ and $G$ be a weakly continuous metric on $\on{Imm}(S^1,\R^d)$. If $G$ can be extended to a weakly continuous Riemannian metric on $\mc I^n(S^1,\R^d)$, then the statements of this section can also be extended from $\on{Imm}(S^1,\R^d)$ to $\mc I^n(S^1,\R^d)$. This is true for Prop.~\ref{prop:lipschitz_basic_results}, the calculations in Lem.~\ref{lem:variation_Dsk_c}, Prop.~\ref{prop:things_lipschitz} and Prop.~\ref{prop:sobolev_uniform_equivalent}. Consider for example the inequality
\[
\| D^n_{c_1} c_1 - D^n_{c_2} c_2 \|_{L^2(d\th)} \leq C \on{dist}(c_1,c_2)\,,
\]
from Prop.~\ref{prop:things_lipschitz}, valid for $c_1, c_2 \in \on{Imm}(S^1,\R^d)$ in a bounded metric ball. Here $\on{dist}$ is the geodesic distance on $(\on{Imm}(S^1,\R^d), G)$. Proposition~\ref{prop:geod_distance_submanifold} and Rem.~\ref{rem:construct_P_by_hand} show that the geodesic distance on $\mc I^n(S^1,\R^d)$ restricted to $\on{Imm}(S^1,\R^d)$ coincides with the geodesic distance on $\on{Imm}(S^1,\R^d)$. Given $c_1, c_2 \in \mc I^n(S^1,\R^d)$, choose sequences of smooth immersions $c_1^j, c_2^j \in \on{Imm}(S^1,\R^d)$ with $c_i^j \to c_i$ in $\mc I^n(S^1,\R^d)$. Then $\on{dist}(c_1^j, c_2^j) \to \on{dist}(c_1, c_2)$, because the metric topology is weaker than the manifold topology. The left hand side also converges, because $c \mapsto D_s^n c$ is a continuous map $\mc I^n(S^1,\R^d) \to L^2(S^1,\R^d)$. Thus the inequality continues to hold on metric balls in $\mc I^n(S^1,\R^d)$.
\end{remark}

\section{Metric and Geodesic Completeness}
\label{sec:met_geod_complete}

\subsection{Space of Sobolev Immersions}

The estimates of the previous section allow us to find upper and lower bounds for the geodesic distance on $\mc I^n(S^1,\R^d)$.

\begin{lemma} 
\label{lem:dist_uniform_equivalence}
Let $n \geq 2$ and $G$ be a Sobolev metric of order $n$ with constant coefficients. Then
\begin{enumerate}
\item
Given a metric ball $B(c_0,r)$ in $\mc I^n(S^1,\R^d)$, there exists $C$, such that
\[
\| c_1 - c_2 \|_{H^n(d\th)} \leq C \on{dist}(c_1, c_2)\,,
\]
holds for all $c_1, c_2 \in B(c_0,r)$.
\item
Given $c_0 \in \mc I^n(S^1,\R^m)$, there exist $r>0$ and $C$, such that
\[
\on{dist}(c_1, c_2) \leq C \| c_1 - c_2 \|_{H^n(d\th)}\,,
\]
holds for all $c_1, c_2 \in B(c_0,r)$.
\end{enumerate}
\end{lemma}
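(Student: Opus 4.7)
Both inequalities are complementary consequences of Proposition~\ref{prop:sobolev_uniform_equivalent}, which supplies the uniform equivalence between $G_c(h,h)$ and $\|h\|_{H^n(d\th)}^2$ on metric balls. The plan is to use the lower bound $\|h\|_{H^n(d\th)} \lesssim \sqrt{G_c(h,h)}$ along arbitrary near-minimizing paths to get (1), and the upper bound $\sqrt{G_c(h,h)} \lesssim \|h\|_{H^n(d\th)}$ along the straight-line path to get (2). Part (2) will be argued using part (1) to bridge the two topologies involved.

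For (1), I would fix $c_1,c_2 \in B(c_0,r)$ and $\ep \in (0,r)$, and choose a piecewise smooth path $\ga:[0,1]\to \mc I^n$ from $c_1$ to $c_2$ with $L_G(\ga) < \on{dist}(c_1,c_2) + \ep < 2r + \ep$. By the triangle inequality for $\on{dist}$, the image of $\ga$ lies in $B(c_0,4r)$. On that enlarged ball Proposition~\ref{prop:sobolev_uniform_equivalent} (extended to $\mc I^n$ by Remark~\ref{rem:extend_results}) provides a constant $C$ depending only on $c_0$ and $r$ with $\|v\|_{H^n(d\th)} \leq C\sqrt{G_c(v,v)}$ for every $c \in B(c_0,4r)$. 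Writing $c_2-c_1 = \int_0^1 \ga_t(t)\,\ud t$ as a Bochner integral into $H^n(d\th)$ and applying the triangle inequality yields
\[
\|c_2 - c_1\|_{H^n(d\th)} \leq \int_0^1 \|\ga_t(t)\|_{H^n(d\th)}\,\ud t \leq C\int_0^1 \sqrt{G_{\ga(t)}(\ga_t,\ga_t)}\,\ud t = C\,L_G(\ga).
\]
Letting $\ep \to 0$ and infimizing over $\ga$ gives the assertion.

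For (2), the competitor path is the straight line $\ga(t) = (1-t)c_1 + tc_2$. Since $\mc I^n$ is $H^n$-open, I can pick $\rh > 0$ so small that every $c$ with $\|c-c_0\|_{H^n(d\th)} < \rh$ is an immersion and satisfies uniform upper and lower bounds on $|c'|$ (using $H^n \hookrightarrow C^{n-1}$). On this $H^n$-ball the explicit expansion of $D_s^j h$ in terms of $\p_\th^i h$ and $\p_\th^i c$, together with the uniform bound on $|c'|^{-1}$, gives an estimate $G_c(h,h) \leq C^2\|h\|_{H^n(d\th)}^2$ with constant depending only on $c_0$ and $\rh$. Now I use part (1) to choose $r > 0$ so small that $B(c_0,r)$ is contained in the $H^n$-ball of radius $\rh/2$ around $c_0$; by convexity of that $H^n$-ball, the straight line $\ga$ connecting any two points of $B(c_0,r)$ stays inside the $\rh$-ball, so it is a $C^\infty$ path in $\mc I^n$ and
\[
\on{dist}(c_1,c_2) \leq L_G(\ga) = \int_0^1 \sqrt{G_{\ga(t)}(c_2-c_1,c_2-c_1)}\,\ud t \leq C\|c_2-c_1\|_{H^n(d\th)}.
\]

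The only genuinely delicate point is the change of topology in (2): the upper bound on $G_c$ is available on $H^n$-balls, but the hypothesis only gives points in a geodesic ball. Bridging this is exactly what part (1) does, which is why the two statements must be proved in the order given. Everything else reduces to routine bookkeeping around Proposition~\ref{prop:sobolev_uniform_equivalent}.
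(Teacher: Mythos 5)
Your proof is correct, and part (1) follows the paper's argument exactly (take a near-minimizing path, note it stays in an enlarged metric ball, apply the uniform lower bound of Prop.~\ref{prop:sobolev_uniform_equivalent} under the integral defining the length); you are in fact slightly more careful than the paper, which writes $L(c)<r$ where only $L(c)<2r+\ep$ is available. For part (2) both you and the paper use the straight-line competitor path, but you differ in how you guarantee that this path stays where the upper bound $\sqrt{G_c(h,h)}\lesssim\|h\|_{H^n(d\th)}$ holds uniformly. The paper chooses a convex manifold-open neighborhood $U$ of $c_0$ and invokes Lang's result that for a strong Riemannian metric the geodesic distance induces the manifold topology, so that some metric ball $B(c_0,r)$ sits inside $U$, and then quotes Prop.~\ref{prop:sobolev_uniform_equivalent} for the constant. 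You instead bootstrap from part (1): the Lipschitz estimate $\|c-c_0\|_{H^n(d\th)}\leq C\on{dist}(c_0,c)$ places $B(c_0,r)$ inside a small $H^n(d\th)$-ball for $r$ small, convexity of that ball keeps the segment inside it, and you prove the bound $G_c(h,h)\leq C^2\|h\|^2_{H^n(d\th)}$ there directly from the structure of the constant-coefficient metric (uniform bounds on $\|c\|_{H^n_\th}$ and $\||c'|\inv\|_{L^\infty}$ plus the usual product estimates, where at most one factor fails to be in $L^\infty$ and is then controlled in $L^2$). Your route is more self-contained --- it avoids the appeal to Lang and sidesteps the question of which metric ball the convex set $U$ lies in, a point the paper leaves implicit when it says the constant is ``again given by Prop.~\ref{prop:sobolev_uniform_equivalent}'' --- at the cost of redoing the elementary half of the norm equivalence by hand. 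Both arguments are sound.
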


\begin{proof}
Given $c_1, c_2 \in B(c_0,r)$, let $c(t, \th)$ be a piecewise smooth path of length $L(c) < r$ connecting them. Then,
\[
\| c_1 - c_2 \|_{H^n(d\th)} \leq \int_0^1 \| \dot c(t) \|_{H^n(d\th)} \ud t \leq
C \int_0^1 \sqrt{G_c(\dot c, \dot c)} \ud t \leq C L(c)\,,
\]
where $C$ is given by Prop.~\ref{prop:sobolev_uniform_equivalent} and depends only on $c_0$ and $r$. By taking the infimum over all paths we obtain the first part of the statement.

Given $c_0 \in \mc I^n(S^1,\R^d)$, let $U$ be a convex, open neighborhood of $c_0$ in $\mc I^n(S^1,\R^d)$ and $r > 0$, such that $B(c_0,r) \subseteq U$. Such an $r$ exists, because $G$ is a smooth, strong Riemannian metric and hence the geodesic distance induces the manifold topology, see \cite[Prop. 6.1]{Lang1999}. Given $c_1, c_2 \in B(c_0,r)$, define the path $c(t) = c_1 + t(c_2 - c_1)$ to be the linear interpolation between $c_1$ and $c_2$. Then,
\[
\on{dist}(c_1, c_2) \leq L(c) = \int_0^1 \sqrt{G_c(c_2 - c_1, c_2 - c_1)} \ud t
\leq C \| c_2 - c_1 \|_{H^n(d\th)}\,,
\]
with $C$ again given by Prop.~\ref{prop:sobolev_uniform_equivalent}. This proves the second part.
\end{proof}

The lemma shows that the identity map
\[
\on{Id} : (\mc I^n(S^1,\R^d), \on{dist}) \to (\mc I^n(S^1,\R^d), \| \cdot \|_{H^n(d\th)})
\]
is locally bi-Lipschitz. This is sufficient to show the metric completeness of the space $(\mc I^n(S^1,\R^d), G)$.

\begin{theorem}
\label{thm:metric_completeness}
Let $n \geq 2$ and $G$ be a Sobolev metric of order $n$ with constant coefficients. Then
\begin{enumerate}
\item
$\left(\mc I^n(S^1,\R^d), \on{dist}\right)$ is a complete metric space;
\item
$\left(\mc I^n(S^1,\R^d), G\right)$ is geodesically complete.
\end{enumerate}
\end{theorem}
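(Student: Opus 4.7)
My plan is to leverage the bi-Lipschitz comparison of Lem.~\ref{lem:dist_uniform_equivalence} together with Prop.~\ref{prop:lipschitz_basic_results} to transfer completeness from the ambient Hilbert space $H^n(S^1,\R^d)$ to $(\mc I^n,\on{dist})$, and then invoke the standard passage from metric to geodesic completeness in the strong Riemannian setting.

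For part (1), let $(c_k)$ be a $\on{dist}$-Cauchy sequence. Since it is bounded, it is eventually contained in a single metric ball $B(c_K,r)$. Applying Lem.~\ref{lem:dist_uniform_equivalence}(1) on this ball gives $\|c_k-c_j\|_{H^n(d\th)} \lesssim \on{dist}(c_k,c_j)$, so $(c_k)$ is Cauchy in the Hilbert space $H^n(S^1,\R^d)$ and converges to some $c_\infty$. To see that the limit is an immersion I invoke Prop.~\ref{prop:lipschitz_basic_results}: $\log|c'|$ is bounded in $L^\infty$ on metric balls, so $|c_k'|\geq e^{-M}$ uniformly for some $M$; the Sobolev embedding $H^{n-1}\hookrightarrow C^0$ (valid since $n-1\geq 1$) applied to $c_k' \to c_\infty'$ in $H^{n-1}$ then yields $|c_\infty'|\geq e^{-M}>0$, whence $c_\infty\in\mc I^n$.

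The delicate step is promoting $H^n$-convergence to $\on{dist}$-convergence. Lem.~\ref{lem:dist_uniform_equivalence}(2) cannot be applied directly because its hypothesis requires the $c_k$ to lie in a geodesic ball around $c_\infty$ --- exactly what I am trying to establish. Instead I will rerun its proof on the linear segment $c(t)=(1-t)c_k+t c_\infty$: for $k$ large, the uniform lower bound on $|c_\infty'|$ together with the $L^\infty$-closeness of $c_k'$ to $c_\infty'$ forces $|c(t)'|\geq e^{-M}/2>0$ for all $t\in[0,1]$, so the whole segment lies in $\mc I^n$. Its image is compact there, and continuity of the strong Riemannian metric $G$ on $\mc I^n$ provides a uniform bound $\sqrt{G_{c(t)}(h,h)} \lesssim \|h\|_{H^n(d\th)}$ along the segment. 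Integrating yields $\on{dist}(c_k,c_\infty) \leq L_G(c) \lesssim \|c_k-c_\infty\|_{H^n(d\th)} \to 0$.

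Part (2) follows from part (1) by the classical argument in the strong Riemannian setting (see e.g.~\cite{Lang1999}): a maximal geodesic $\ga:[0,T)\to\mc I^n$ with $T<\infty$ has constant speed, so $\ga(t_n)$ is $\on{dist}$-Cauchy whenever $t_n\to T^-$; the limit $\ga(T)\in\mc I^n$ exists by part (1), and local well-posedness of the geodesic equation around $\ga(T)$ permits an extension past $T$, contradicting maximality. The main obstacle I foresee is the circularity in part (1) described above; it is resolved by using Prop.~\ref{prop:lipschitz_basic_results} to keep the linear interpolation uniformly away from non-immersions so that $G$ can be controlled on a compact path joining $c_k$ to $c_\infty$.
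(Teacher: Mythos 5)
Your proof is correct and follows essentially the same route as the paper: Cauchy in $\on{dist}$ implies Cauchy in $H^n(d\th)$ via Lem.~\ref{lem:dist_uniform_equivalence}(1), the limit is an immersion by the uniform lower bound on $|c'|$ from Prop.~\ref{prop:lipschitz_basic_results}, convergence in $\on{dist}$ follows from the reverse estimate, and geodesic completeness is the standard consequence for strong metrics. The only deviation is your handling of the last step: the circularity you worry about is resolved more simply by noting that, $G$ being a smooth strong metric, the ball $B(c_\infty,r)$ from Lem.~\ref{lem:dist_uniform_equivalence}(2) is open in the manifold topology and hence eventually contains the $H^n$-convergent sequence, so the lemma applies directly; your re-derivation along the linear segment is a valid (and essentially equivalent) substitute.
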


\begin{proof}
Let $(c^j)_{j \in \mb N}$ be a Cauchy sequence with respect to the geodesic distance. Then the sequence remains within a bounded metric ball in $\mc I^n(S^1,\R^d)$ and by Lem.~\ref{lem:dist_uniform_equivalence} it is also a Cauchy sequence with respect to $\| \cdot \|_{H^n(d\th)}$. As $H^n(S^1,\R^d)$ is complete, there exists a limit $c^\ast \in H^n(S^1,\R^d)$ and $\| c^j - c^\ast \|_{H^n(d\th)} \to 0$. From Prop.~\ref{prop:lipschitz_basic_results} we see that $\| \p_\th c^j(\th)\| \geq C > 0$ is bounded from below, away from 0, on metric balls and thus, so is the limit; in particular $c^\ast \in \mc I^n(S^1,\R^d)$. Finally, the second part of Lem.~\ref{lem:dist_uniform_equivalence} shows that $\on{dist}(c^j, c^\ast) \to 0$. Hence $(\mc I^n(S^1,\R^d), \on{dist})$ is complete.

It is shown in \cite[Sect.~3]{Bruveris2014} that Sobolev metrics of order $n \geq 2$ are smooth on $\mc I^n(S^1,\R^d)$ and \cite[Prop. 6.5]{Lang1999} shows that on a strong Riemannian manifold metric completeness implies geodesic completeness.
\end{proof}

A direct proof of geodesic completeness for plane curves can be found in \cite{Bruveris2014}. In the next section we will prove the third completeness statement, the existence of minimizing geodesics between any two curves.

\subsection{Space of Smooth Immersions}

Of course one can also consider Sobolev metrics on the space $\on{Imm}(S^1,\R^d)$ of smooth immersions. In this case we do not have metric completeness, but interestingly enough the space $(\on{Imm}(S^1,\R^d), G)$ is geodesically complete. We are nevertheless able to identify the metric completion of $\on{Imm}(S^1,\R^d)$. That the metric completion of $\on{Imm}(S^1,\R^d)$ equals $\mc I^n(S^1,\R^d)$ for plane curves was remarked in \cite{Bauer2014_preprint} using the same method as below.

\begin{theorem}
\label{thm:immersion_weak_completion}
Let $n \geq 2$ and $G$ be a Sobolev metric of order $n$ with constant coefficients. For $m > n$ we have:
\begin{enumerate}
\item
The geodesic distance on $(\mc I^m, G)$ coincides with the restriction of the geodesic distance on $(\mc I^n, G)$ to $\mc I^m$. In particular the metric completion of $(\mc I^m, \on{dist}^n)$ is $(\mc I^n, \on{dist}^n)$.
\item
$(\mc I^m, G)$ is geodesically complete.
\end{enumerate}
The same holds for $m=\infty$, i.e., the space $\on{Imm}$ of smooth immersions.
\end{theorem}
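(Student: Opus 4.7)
The plan is to prove the two assertions separately, then reduce the $m = \infty$ case to the finite-$m$ ones. For part (1), the inequality $\on{dist}^n(c_1,c_2) \leq \on{dist}^m(c_1,c_2)$ on $\mc I^m \times \mc I^m$ is immediate: $G$ is literally the same Riemannian metric on both spaces, so every piecewise smooth path in $\mc I^m$ is also a path in $\mc I^n$ of identical $G$-length, and the infimum over the larger class of paths can only decrease. The opposite inequality is the substantive content of (1) and requires approximating an arbitrary $\mc I^n$-path connecting two points of $\mc I^m$ by one lying in $\mc I^m$ whose length is arbitrarily close.

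To carry out this approximation, given $\varepsilon > 0$ I would fix a piecewise smooth path $c : [0,1] \to \mc I^n$ joining $c_1, c_2 \in \mc I^m$ with $L(c) < \on{dist}^n(c_1,c_2) + \varepsilon$ and mollify in $\theta$, setting $c_\delta(t,\theta) = (\rho_\delta \star_\theta c(t,\cdot))(\theta)$ for a smooth mollifier $\rho_\delta$ on $S^1$. For small $\delta$ the curve $c_\delta(t)$ lies in $\mc I^m$ for every $t$ (it is smooth in $\theta$ and close to $c(t)$ in $C^1$), $c_\delta \to c$ in $C([0,1],H^n(d\theta))$, and $\dot c_\delta \to \dot c$ in $L^2([0,1],H^n(d\theta))$. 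Since $G$ is weakly continuous and uniformly equivalent to the $H^n(d\theta)$-inner product on metric balls by Prop.~\ref{prop:sobolev_uniform_equivalent}, dominated convergence (with dominating function proportional to $\|\dot c(t)\|_{H^n(d\theta)}$) yields $L(c_\delta) \to L(c)$. The endpoints $c_\delta(0),c_\delta(1)$ differ from $c_1,c_2$ by quantities tending to zero in $H^n(d\theta)$, so I would prepend the linear interpolation from $c_1$ to $c_\delta(0)$ and append the one from $c_\delta(1)$ to $c_2$; both interpolations lie in $\mc I^m$ and their $G$-lengths are controlled by $C\|c_1 - c_\delta(0)\|_{H^n(d\theta)}$ and $C\|c_2 - c_\delta(1)\|_{H^n(d\theta)}$ via Lem.~\ref{lem:dist_uniform_equivalence}(2) applied on a fixed neighbourhood of $c_1,c_2$. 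Letting $\delta \to 0$ and then $\varepsilon \to 0$ gives $\on{dist}^m \leq \on{dist}^n$ on $\mc I^m$. The completion statement follows at once: $(\mc I^n, \on{dist}^n)$ is already complete by Thm.~\ref{thm:metric_completeness}, $\mc I^m$ is dense in $\mc I^n$ in the $H^n(d\theta)$-topology by standard mollification, and Lem.~\ref{lem:dist_uniform_equivalence} identifies this topology locally with that of $\on{dist}^n$, so the $\on{dist}^n$-closure of $\mc I^m$ is all of $\mc I^n$.

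For part (2), given $(c_0,h_0) \in T\mc I^m$ the corresponding geodesic $c : \R \to \mc I^n$ is defined for all time by Thm.~\ref{thm:metric_completeness}, and it remains to show $c(t) \in \mc I^m$ for every $t$. Local existence on $\mc I^m$ (the geodesic spray restricted to $T\mc I^m$ is a smooth vector field there) combined with uniqueness of the $\mc I^n$-ODE yields an $\mc I^m$-geodesic on a maximal interval $[0,T^\ast)$ coinciding with the $\mc I^n$-geodesic. If $T^\ast < \infty$ then $\|c(t)\|_{H^m(d\theta)}$ would blow up as $t \nearrow T^\ast$; to rule this out I would use that $c(t)$ stays within a fixed metric ball in $\mc I^n$ on $[0,T^\ast]$, so by Prop.~\ref{prop:things_lipschitz} the lower-order quantities $D_s^k c$ (for $k \leq n-1$) and $D_s^k |c'|$ (for $k \leq n-2$) are uniformly bounded in $L^\infty$. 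I would then derive an energy estimate for the top-order quantity $\|D_s^m c\|_{L^2(ds)}^2$ directly from the geodesic equation and close it via Gronwall using Cor.~\ref{cor:gronwall_applied}, giving a uniform $H^m$-bound on $[0,T^\ast]$ and the required contradiction. The case $m = \infty$ reduces to the case of each finite $m \in \mb N$. The main obstacle is this top-order energy estimate: one must identify (and exhibit cancellation among) the terms in the geodesic equation of highest $H^m$-order in $c$, which is the heart of the no-loss-no-gain regularity phenomenon for Sobolev metrics and the only nontrivial analytic ingredient not already assembled in Sect.~\ref{sec:estimates}.
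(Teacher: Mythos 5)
Your treatment of part (1) is essentially the argument the paper uses: the paper packages the two-sided comparison of $\on{dist}^m$ and $\on{dist}^n|_{\mc I^m}$ into Prop.~\ref{prop:geod_distance_submanifold} on dense weak submanifolds, approximating a path in $\mc I^n$ by $P_j \o \ga$ for smoothing operators $P_j$ (taken to be convolution with mollifiers in the case $m=\infty$, cf.\ Rem.~\ref{rem:construct_P_by_hand}) and correcting the endpoints by linear interpolation in a chart, exactly as you do; the completion statement then follows from Thm.~\ref{thm:metric_completeness} together with density. This half of your proposal is sound, modulo the routine verification that all the mollified paths $c_\de$ stay in a single metric ball so that the uniform equivalence constant of Prop.~\ref{prop:sobolev_uniform_equivalent} applies to all of them at once.

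Part (2) contains a genuine gap, and you have in effect named it yourself: the uniform $H^m(d\th)$-bound on the geodesic up to time $T^\ast$, i.e.\ the assertion that the geodesic flow neither loses nor gains regularity. The surrounding scaffolding --- local existence of the $\mc I^m$-geodesic from smoothness of the spray on $T\mc I^m$, coincidence with the $\mc I^n$-geodesic by uniqueness, the blow-up alternative, and the uniform $L^\infty$-control of the lower-order quantities via Prop.~\ref{prop:things_lipschitz} --- is correct, but without the top-order estimate the proof does not close, and you explicitly defer it. The paper does not prove this estimate either; it invokes \cite[Thm.~3.7]{Bruveris2014} (observing that it carries over from plane curves to curves in $\R^d$) together with \cite[Thm.~12.1]{Ebin1970b}. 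Note moreover that the route you sketch for the missing step --- writing out the geodesic equation, isolating the terms of highest $H^m$-order in $c$, exhibiting a cancellation, and closing via Cor.~\ref{cor:gronwall_applied} --- is not how the cited results are obtained: the standard no-loss-no-gain argument of \cite{Ebin1970b,Ebin1970} does not hunt for cancellations in the PDE but exploits the invariance of $G$ under reparametrizations, which shows that $\p_\th c$ satisfies an equation of the same type and hence that the existence time on $T\mc I^m$ is controlled by quantities measured only in $\mc I^n$. If you insist on the direct energy estimate you must actually display the commutator structure that makes the top-order terms harmless; as written, the decisive analytic step is announced but not performed.
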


\begin{proof}
Let $m > n$ or $m=\infty$. Then $\mc I^m$ is a dense, weak submanifold of $\mc I^n$ and thus by Prop.~\ref{prop:geod_distance_submanifold} the restriction of the geodesic distance on $\mc I^n$ coincides with the geodesic distance on $\mc I^m$. In particular the notation $(\mc I^m, \on{dist})$ is unambiguous. The metric space $(\mc I^n, \on{dist})$ is complete by Thm.~\ref{thm:metric_completeness} and thus it is the metric completion of $(\mc I^m, \on{dist})$. For $m=\infty$ we need to use Rem.~\ref{rem:construct_P_by_hand} and one can choose the sequence of operators $P_j : H^n \to C^\infty$, for example, to be convolution with mollifiers, see, e.g., \cite[Sect. 2.28]{Adams2003}.

Geodesic completeness of $(\mc I^m, G)$ follows from the property of the geodesic equation to preserve the smoothness of the initial conditions. Thus, given $(c_0, u_0) \in T\mc I^m$, the corresponding geodesic $c(t)$ exists for all time in $\mc I^n$ and by \cite[Thm. 3.7]{Bruveris2014}, which remains valid for curves in $\R^d$, we have $(c(t), \dot c(t)) \in T \mc I^m$ for all $t > 0$. See also \cite[Thm. 12.1]{Ebin1970b} and \cite[App. A]{Bauer2014b} for more details on why the geodesic equation preserves the smoothness of initial conditions.
\end{proof}

\section{Existence of Minimizing Geodesics}
\label{sec:min_geodesics}

\subsection{Space of Sobolev Immersions}

In this section we will show that any two curves in the same connected component of $\mc I^n(S^1,\R^d)$ can be joined by a minimizing geodesic with respect to a Sobolev metric of order $n \geq 2$ with constant coefficients. See Sect.~\ref{sec:connectivity} for a discussion of the connectivity of $\mc I^n(S^1,\R^d)$.

We will denote in this section the unit interval by $I = [0,1]$. To shorten notaion we set
\[
H^1_t H^n_\th = H^1_tH^n_\th([0,1] \x S^1, \R^d) \cong H^1(I, H^n(S^1,\R^d))\,,
\]
and similarly for $C_t H^n_\th$, $L^2_t L^2_\th$, etc.

\begin{theorem}
\label{thm:ex_of_minimizers}
Let $n \geq 2$ and $G$ be a Sobolev metric of order $n$ with constant coefficients. Given $c_0 \in \mc I^n(S^1,\R^d)$ and a weakly closed set $A \subseteq \mc I^n(S^1,\R^d)$, such that at least one curve in $A$ belongs to the same connected component as $c_0$, there exists a geodesic realizing the minimal distance between $c_0$ and $A$.
\end{theorem}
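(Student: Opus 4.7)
My plan is to apply the direct method of the calculus of variations in the space $H^1(I, H^n(S^1, \R^d))$ of $H^1$-paths with values in $H^n$. Let $d_\ast := \inf\{\on{dist}(c_0,c) : c \in A\}$, which is finite by hypothesis, and choose a minimizing sequence of paths $c^j \in H^1(I, \mc I^n)$ with $c^j(0)=c_0$, $c^j(1) \in A$, and $L(c^j) \to d_\ast$. After reparametrizing each $c^j$ to constant $G$-speed we have $E(c^j) = L(c^j)^2 \to d_\ast^2$. The paths stay in a bounded metric ball $B(c_0, R)$, so Prop.~\ref{prop:sobolev_uniform_equivalent} yields a uniform bound on $\|\dot c^j\|_{L^2_t H^n_\th}$, while Prop.~\ref{prop:things_lipschitz} yields $\|c^j\|_{L^\infty_t H^n_\th}$ bounded with $|\p_\th c^j| \geq \delta > 0$ uniformly. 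Hence $\{c^j\}$ is bounded in $H^1(I, H^n_\th)$.

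Second, by reflexivity I would extract a subsequence with $c^j \rightharpoonup c^\infty$ weakly in $H^1_t H^n_\th$. The Aubin--Lions lemma (with $H^n \Subset H^{n-\ep}$) gives strong convergence $c^j \to c^\infty$ in $C(I, H^{n-\ep}_\th)$ for any $\ep > 0$, and Sobolev embedding upgrades this to $\p_\th^\ell c^j \to \p_\th^\ell c^\infty$ uniformly on $I \times S^1$ for every $0 \leq \ell \leq n-1$. The lower bound $|\p_\th c^\infty| \geq \delta$ persists, so $c^\infty(t) \in \mc I^n$ for every $t$. The boundary conditions pass to the limit: $c^\infty(0) = c_0$ by uniform convergence, and $c^\infty(1) \in A$ since $c^j(1) \rightharpoonup c^\infty(1)$ in $H^n_\th$ and $A$ is weakly closed.

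The crux is lower semicontinuity of the energy, $E(c^\infty) \leq \liminf_j E(c^j)$, which will identify $c^\infty$ as a minimizer and hence, after reparametrization, a minimizing geodesic realizing $\on{dist}(c_0, A)$. Writing
\[
E(c) = \sum_{k=0}^n a_k \big\|\sqrt{|c'|}\, D_c^k \dot c\big\|_{L^2(I \times S^1)}^2,
\]
weak lower semicontinuity of the $L^2$-norm reduces the task to showing, for each $0 \leq k \leq n$, that $\sqrt{|c^{j\prime}|}\, D_{c^j}^k \dot c^j \rightharpoonup \sqrt{|c^{\infty\prime}|}\, D_{c^\infty}^k \dot c^\infty$ weakly in $L^2(I \times S^1, \R^d)$. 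Expanding $D_c^k \dot c$ as a polynomial in $\dot c^{(m)}$ ($m \leq k$) and $c^{(\ell)}$ ($\ell \leq n$) with rational coefficients in $|c'|$, every monomial whose $c$-factors have order $\leq n-1$ converges by ``uniform $\times$ weak $\Rightarrow$ weak'' in $L^2$.

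The obstacle is the monomials containing $c^{j(n)}$, which converges only weakly in $L^2$, multiplied by some $\dot c^{j(m)}$ with $m \leq n-1$, also only weakly: ``weak $\times$ weak'' is insufficient. I would overcome this via the Leibniz identity
\[
c^{j(n)}\, \dot c^{j(m)} \;=\; \p_\th\bigl(c^{j(n-1)}\, \dot c^{j(m)}\bigr) - c^{j(n-1)}\, \dot c^{j(m+1)}.
\]
Here $c^{j(n-1)} \to c^{\infty(n-1)}$ \emph{uniformly} and $\dot c^{j(m)}, \dot c^{j(m+1)} \rightharpoonup \dot c^{\infty(m)}, \dot c^{\infty(m+1)}$ weakly in $L^2$ (as $m+1 \leq n$), so both terms on the right converge distributionally to the expected $c^\infty$-expression, while the left-hand side is $L^2$-bounded. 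Uniqueness of weak limits then upgrades this to weak $L^2$-convergence $c^{j(n)}\, \dot c^{j(m)} \rightharpoonup c^{\infty(n)}\, \dot c^{\infty(m)}$, and multiplying by the uniformly convergent remaining coefficients concludes the proof of LSC. Then $c^\infty$ minimizes $E$, so by the standard variational argument it is (a reparametrization of) a minimizing geodesic.
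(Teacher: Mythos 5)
Your proof is correct and its overall skeleton --- the direct method on the set of $H^1$-paths from $c_0$ into $A$, uniform $H^1_tH^n_\th$ bounds via Prop.~\ref{prop:sobolev_uniform_equivalent} and Lem.~\ref{lem:dist_uniform_equivalence}, Aubin--Lions compactness in $C_tH^{n-\ep}_\th$, persistence of $|\p_\th c|\geq\de>0$ and of the boundary conditions, and weak sequential lower semicontinuity of the energy --- is exactly the paper's. Where you genuinely diverge is in the one nontrivial step, the weak $L^2_tL^2_\th$-convergence of $D_{c^j}^k\dot c^j$ to $D_{c^\ast}^k\dot c^\ast$. The paper isolates this as Lem.~\ref{lem:Ds_weak_convergence} and proves it by induction on $k$, keeping the operator $D_{c^j}=|\p_\th c^j|\inv\p_\th$ intact: one tests $D_{c^j}h^j-D_ch$ against a dense set of better test functions, splits off the coefficient difference $|\p_\th c^j|\inv-|\p_\th c|\inv$ (which tends to $0$ in $C_tC_\th$ by Lem.~\ref{lem:one_over_norm_cp_continuous}) and pairs the weakly convergent $\p_\th h^j$ against the fixed function $|\p_\th c|\inv w$. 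You instead expand $D_c^k\dot c$ into monomials and treat the single problematic product $c^{j(n)}\dot c^{j(m)}$ by the Leibniz identity, passing to the limit distributionally and invoking uniqueness of limits for an $L^2$-bounded sequence; this is a sound compensated-compactness style argument. Two small points you should make explicit in your route: first, that no monomial of $D_c^k\dot c$ with $k\le n$ contains two $c$-factors of top order $n$ (derivative counting gives $m+\sum_i(\ell_i-1)=k$, so a factor $c^{(\ell_i)}$ with $\ell_i=n$ forces $m\le 1$, hence $m+1\le n$, and excludes a second top-order factor except formally for $n=2$, where the explicit formula for $D_c^2\dot c$ shows no such term occurs); second, that the product rule identifying the distributional limit with $c^{\ast(n)}\dot c^{\ast(m)}$ is legitimate because $c^{\ast(n-1)}(t)$ and $\dot c^{\ast(m)}(t)$ lie in $H^1_\th$ for a.e.\ $t$. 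The paper's duality argument buys a cleaner, reusable statement about $D_c^k$ under weak convergence (quoted again for the shape-space theorem and generalized to non-constant coefficients via \eqref{eq:A_convergence_prop}); your monomial-plus-integration-by-parts argument is more elementary and avoids the density step, at the cost of combinatorial bookkeeping.
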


To restate the theorem, given $c_0$ and $A$, there exists $c_1 \in A$ and a geodesic $c(t)$ with $c(0) = c_0$ and $c(1) = c_1$, such that
\[
L(c) = \on{dist}(c_0, c_1) = \on{dist}(c_0, A) = \inf_{\tilde c \in A} \on{dist}(c_0, \tilde c)\,,
\]
and the same holds for the energy $E(c)$ and the squared distance.

Before we proceed with the proof of the theorem, which will be a bit technical, we would like to comment on possible generalizations of the result.

\subsection{Metrics with Non-Constant Coefficients}

Sobolev metrics with non-con\-stant coefficients have been of interest; for example \cite{Shah2013, Bauer2014b} look at second order metrics and \cite{Mennucci2008} at metrics of higher order. Similarly length-weighted metrics are studied in \cite{Bauer2012d}. 

\begin{remark}
The proof of Thm.~\ref{thm:ex_of_minimizers} continues to work in a slightly more general setting. We need that $G$ is a continuous Riemannian metric on $\mc I^n(S^1,\R^d)$, that is uniformly bounded and uniformly coercive with respect to the background $H^n(d\th)$-norm on every metric ball. This is necessary to show that a minimizing sequence is bounded in the Hilbert space $H^1(I, H^n(d\th))$. The condition $n\geq 2$ is necessary to show that weak limits still satisfy $|c'(t,\th)| > 0$. In fact $n > 3/2$ would be sufficient here. 

Finally, to show that the energy $E$ is sequentially weakly lower semicontinuous we used special properties of the arc length derivative, established in Lem.~\ref{lem:Ds_weak_convergence}. The same argument works, if the metric $G$ is of the form
\[
G_c(h,h) = \sum_{i=1}^N \| A_i(c)h\|^2_{F_i}
\]
with some Hilbert spaces $F_i$ and smooth maps $A_i : \mc I^n \to L(H^n, F_i)$, and the maps $A_i$ have the following property:
\begin{equation}
\label{eq:A_convergence_prop}
\begin{array}{c}
c^j \to c \text{ weakly in } H^1_t \mc I^n_\th \\
(c^j)_{j \in \mb N} \text{ bounded in } H^1_t H^n_\th
\end{array}
\Rightarrow
A_i(c^j)\dot c^j \rightharpoonup A_i(c) \dot c \text{ weakly in } L^2(I, F_i)\,.
\end{equation}
The proof can then be reused without change.
\end{remark}

This remark allows us to consider Sobolev metrics with non-constant coefficients, for example the curvature weighted metric of order 3,
\[
G_c(h,h) = \int_{S^1} (1 + \ka^2) (|h|^2 + |D_s^3 h|^2) \ud s\,,
\]
or the length weighted metric of order 2
\[
G_c(h,h) = \int_{S^1} \frac{2\pi}{\ell_c} |h|^2 + \left(\frac{\ell_c}{2\pi}\right)^3 |D_s^2 h|^2 \ud s\,.
\]
The latter metric has the property, that it is constant on curves, which are pa\-ra\-met\-rized by constant speed; that is, if $c \in \mc I^2$ with $|c'|\equiv \text{const.}$, then $|c'| = \ell_c/2\pi$ and
\[
G_c(h,h) = \int_{S^1} |h|^2 + |h''|^2 \ud \th\,.
\]
We see that the right hand side is independent of $c$. However the uniform boundedness and uniform coercivity for this metric do not follow immediately from the results in Sect.~\ref{sec:estimates}, since it is not clear that $G$ satisfies hypothesis \eqref{hyp:hyp_compare_sobolev}.

\begin{remark}
A related existence result is presented in \cite{Rumpf2012_preprint}. There the authors assume that $g : U \to L^2_{\on{sym}}(E)$ is a continuous Riemannian metric, with $U$ being an open subset of a Hilbert space $E$, uniformly bounded and coercive with respect to the background metric. With regard to continuity they make the following stronger assumption: let $F$ be another Hilbert space and the embedding $E \hookrightarrow F$ compact; then $g$ should be continuous with respect to the topology of $F$.

While we cannot use this result by itself, since the functional $\int_{S^1} |D_s^n h|^2 \ud s$ is not continuous in a weaker topology than the $H^n(d\th)$ topology, the above result permits us to add lower order terms to the metric. The embedding $H^n \hookrightarrow H^{n-1}$ is compact and so we are free to add metrics, that are continuous on $\mc I^{n-1}$ to $G$, without having to worry, whether they are of the specific form to satisfy \eqref{eq:A_convergence_prop}.
\end{remark}

\subsection{Weak Convergence of Arc Length Derivatives}

To prove Thm.~\ref{thm:ex_of_minimizers} we will need a result about the behaviour of arc length derivatives. This will be Lem.~\ref{lem:Ds_weak_convergence}. We start with two facts about smoothness of operations in Sobolev spaces. The first is a simple generalization of \cite[Prop. 2.20]{Inci2013}.

\begin{lemma}[Prop. 2.20, \cite{Inci2013}]
\label{lem:left_composition}
Let $M$ be a closed manifold, $s > \dim M/2$ and $g \in C_b^\infty(\R^d, \R^m)$. Then left-translation
\[
L_g : H^s(M, \R^d) \to H^s(M,\R^m)\,,\quad f \mapsto g \o f
\]
is a $C^\infty$-map.
\end{lemma}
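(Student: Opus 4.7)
The plan is to reduce this lemma to the exact statement of \cite[Prop. 2.20]{Inci2013}, since the claim is a ``simple generalization'' and the only difference is that the target $\R^m$ replaces $\R$ (or whatever specific target was treated in the reference). First I would decompose $g = (g_1,\dots,g_m)$ with $g_i \in C_b^\infty(\R^d,\R)$, and observe that $H^s(M,\R^m) \cong \prod_{i=1}^m H^s(M,\R)$ as Hilbert spaces. The map $L_g$ then factors as $(L_{g_1},\dots,L_{g_m})$, and a map into a finite product of Banach spaces is $C^\infty$ if and only if each component is $C^\infty$. Thus it suffices to treat the scalar case $m=1$, which is precisely the cited proposition (modulo, possibly, the base manifold: if \cite{Inci2013} treats only a specific closed $M$, I would patch via finitely many charts and a partition of unity to reduce to the Euclidean model).

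For the scalar case, should it need to be reproved, the standard sequence of steps is: (i) use the Sobolev embedding $H^s(M) \hookrightarrow C^0(M)$, valid precisely because $s > \dim M/2$, so that $g \circ f$ is pointwise well-defined and bounded; (ii) establish a Moser estimate $\|g \circ f\|_{H^s} \leq C(\|f\|_{L^\infty})\,(1 + \|f\|_{H^s})$ using the Banach algebra property of $H^s$ together with the chain rule applied to $\partial^\alpha(g \circ f)$; (iii) show continuity of $L_g$ via a difference estimate; (iv) identify the Gateaux derivative $DL_g(f)\cdot h = (Dg \circ f)\cdot h$, which is the composition of multiplication in $H^s$ (bilinear and bounded) with the analogous left-translation operator $L_{Dg}$; (v) iterate, producing the $k$-th derivative as a finite sum of terms of the form $(D^j g \circ f) \cdot (h_1 \otimes \cdots \otimes h_j)$, and invoke induction on $k$ to establish that each $D^k L_g$ exists and is continuous.

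The only real potential obstacle is the Moser-type estimate in step (ii), which requires care when $s$ is not an integer, but this is exactly what the cited reference provides; once it is in hand, the smoothness statement is automatic from the recursive form of the derivatives. Since the paper invokes this as a black-box lemma rather than a new result, I expect the actual proof in the paper to consist of little more than a pointer to \cite{Inci2013} together with the component-wise reduction described above.
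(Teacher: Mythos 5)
The paper offers no proof of this lemma at all---it is stated purely as a pointer to \cite[Prop.~2.20]{Inci2013} (``a simple generalization'' thereof)---so your reduction to the scalar-target case via $H^s(M,\R^m)\cong\prod_{i=1}^m H^s(M,\R)$, together with the chart/partition-of-unity localization to pass from the Euclidean model to a closed $M$, is precisely the routine bookkeeping the author is implicitly relying on. Your proposal is correct and matches the paper's (tacit) approach.
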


We now apply this lemma to show that the term $|c'|\inv$, that appears in the arc-length derivative is well-behaved. We will need to apply the lemma with Sobolev spaces of non-integer order. To emphasize this we will use $s$ instead of $n$ for the Sobolev order.

\begin{lemma}
\label{lem:one_over_norm_cp_continuous}
Let $s \in \R$ and $s>3/2$. The map
\[
\mc I^s(S^1,\R^d) \to H^{s-1}(S^1,\R^d),\quad c \mapsto \frac{1}{|c'|}
\]
is smooth and bounded on sets with $\| c\|_{H^s_\th}$ bounded from above and $\inf_{\th \in S^1} |c'| > M$ for some $M>0$.
\end{lemma}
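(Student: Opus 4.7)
The plan is to factor the map $c \mapsto 1/|c'|$ as
\[
c \;\longmapsto\; c' \;\longmapsto\; g \o c',
\]
where $g \in C_b^\infty(\R^d,\R)$ is a smooth bounded function agreeing with $v \mapsto 1/|v|$ on a suitable annulus away from the origin. The first arrow is the restriction to the open subset $\mc I^s \subset H^s(S^1,\R^d)$ of the bounded linear operator $\p_\th : H^s \to H^{s-1}$ and is hence $C^\infty$. Lem.~\ref{lem:left_composition}, applied with Sobolev order $s-1 > 1/2 = \dim S^1/2$, shows that the second arrow $L_g : H^{s-1}(S^1,\R^d) \to H^{s-1}(S^1,\R)$ is also $C^\infty$. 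Composition yields the smoothness claim, provided $g$ is arranged so that $g \o c' = 1/|c'|$ pointwise on the set of interest.

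To build $g$, fix a subset $\mc S \subset \mc I^s$ with $\|c\|_{H^s_\th} \leq K$ and $\inf_\th |c'(\th)| > M$ for some $K, M > 0$. Because $s-1 > 1/2$, the Sobolev embedding $H^{s-1}(S^1) \hookrightarrow L^\infty$ gives a uniform upper bound $\|c'\|_{L^\infty} \leq R$ on $\mc S$, with $R$ depending only on $K$. I then choose $g \in C_b^\infty(\R^d,\R)$ equal to $1/|v|$ on the closed annulus $\{M/2 \leq |v| \leq 2R\}$; explicitly, multiply $1/|v|$ by a smooth cut-off supported in the slightly larger annulus $\{M/4 \leq |v| \leq 3R\}$. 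Since this support is compact and bounded away from the origin, $g$ together with all its derivatives is bounded, so indeed $g \in C_b^\infty(\R^d,\R)$. For $c \in \mc S$ the pointwise inequalities $M/2 < |c'(\th)| < 2R$ hold for every $\th$, so $g(c'(\th)) = 1/|c'(\th)|$, and the composition displayed above coincides on $\mc S$ with the map in the statement.

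Smoothness of $c \mapsto 1/|c'|$ on all of $\mc I^s$ then follows by locality: any $c_0 \in \mc I^s$ sits in an open neighbourhood of the above type, for instance with $K = \|c_0\|_{H^s_\th} + 1$ and $M = \tfrac12 \min_\th |c_0'(\th)|$. Boundedness on $\mc S$ is obtained from the same factorisation once one observes that the Nemytskii operator $L_g$ with symbol $g \in C_b^\infty$ maps bounded subsets of $H^{s-1}(S^1,\R^d)$ to bounded subsets of $H^{s-1}(S^1,\R)$, which is the standard estimate underlying the proof of Lem.~\ref{lem:left_composition}. The step requiring the most care is the construction of $g$: one must exhibit a function lying \emph{globally} in $C_b^\infty(\R^d,\R)$ yet matching $1/|v|$ on the compact annulus dictated by $\mc S$, which is precisely why the lower bound $\inf|c'| > M$ appears alongside the upper bound on $\|c\|_{H^s_\th}$ in the hypotheses.
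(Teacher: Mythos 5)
Your proof is correct and takes essentially the same route as the paper: both factor the map through the bounded linear operator $\p_\th : H^s \to H^{s-1}$ and then invoke Lem.~\ref{lem:left_composition} after replacing $x \mapsto 1/|x|$ by a $C_b^\infty$ extension that agrees with it on the range of $c'$ for $c$ in the given set. Your version merely spells out the construction of the cut-off and the resulting uniform bounds in more detail than the paper does.
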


\begin{proof}
Let $U \subset \mc I^s(S^1,\R^d)$ be an open subset with $\| c\|_{H^s_\th}$ bounded from above and $\inf |c'|$ from below. Then we can extend the function $g(x) = 1/|x|$ to $g \in C_b^\infty(\R^d, \R)$, such that $g \o c'(\th) = 1/|c'(\th)|$ for all $c \in U$. The lemma now follows from Lem.~\ref{lem:left_composition}.
\end{proof}

And now the main lemma.

\begin{lemma}
\label{lem:Ds_weak_convergence}
Let $s \in \R$, $s > 3/2$ and $0 \leq k \leq s$. If $c^j, c \in H^1_t \mc I^s_\th$ and $h^j, h \in L^2_t H^k_\th$, then
\[
\begin{array}{c}
c^j \rightharpoonup c \text{ weakly in } H^1_t \mc I^s_\th \\
h^j \rightharpoonup h \text{ weakly in } L^2_t H^k_\th \\
(h^j)_{j \in \mb N} \text{ bounded in } L^2_t H^k_\th
\end{array}
\; \Rightarrow \;
D_{c^j}^k h^j \rightharpoonup D_c^k h \text{ weakly in } L^2_t L^2_\th\,.
\]
\end{lemma}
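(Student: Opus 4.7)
The strategy is induction on $k$, with $s$ fixed. The base case $k = 0$ is immediate: the hypothesis gives $h^j \rightharpoonup h$ weakly in $L^2_t H^0_\th = L^2_t L^2_\th$. For the inductive step, setting $u := 1/|c'|$, $u^j := 1/|(c^j)'|$, $\tilde h^j := D_{c^j} h^j = u^j (h^j)'$ and $\tilde h := u h'$, one has $D_{c^j}^k h^j = D_{c^j}^{k-1} \tilde h^j$ and $D_c^k h = D_c^{k-1} \tilde h$. So by the induction hypothesis applied to the pair $(\tilde h^j, \tilde h)$ at Sobolev order $k-1$, it suffices to show (a) $\tilde h^j \rightharpoonup \tilde h$ weakly in $L^2_t H^{k-1}_\th$ and (b) $\tilde h^j$ is bounded there.

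\textbf{Compactness preparation.} Fix $s'$ with $3/2 < s' < s$. The embedding $H^s(S^1) \hookrightarrow H^{s'}(S^1)$ is compact, so by Aubin--Lions the embedding $H^1_t H^s_\th \hookrightarrow C_t H^{s'}_\th$ is compact; hence $c^j \to c$ strongly in $C_t H^{s'}_\th$ (the whole sequence, by uniqueness of the weak limit). The Sobolev embedding $H^{s'} \hookrightarrow C^1$ then gives $|(c^j)'| \to |c'|$ uniformly on $I \x S^1$, so $\inf_{t,\th}|(c^j)'|$ stays bounded away from zero uniformly in $j$ for $j$ large. Lem.~\ref{lem:one_over_norm_cp_continuous}, applied pointwise in $t$, produces strong convergence $u^j \to u$ in $C_t H^{s'-1}_\th$ together with uniform boundedness of $u^j$ in $C_t H^{s-1}_\th$.

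\textbf{Verifying (a) and (b).} Both rest on two standard Sobolev multiplication inequalities on $S^1$: $H^a \cdot H^b \hookrightarrow H^b$ whenever $a > 1/2$ and $a \geq b \geq 0$, and $H^a \cdot H^b \hookrightarrow L^2$ whenever $a,b \geq 0$ and $a + b \geq 1/2$. Since $s > 3/2$ and $s \geq k$, the first estimate with $a = s - 1$, $b = k - 1$ yields
\[
\|\tilde h^j\|_{L^2_t H^{k-1}_\th} \lesssim \|u^j\|_{C_t H^{s-1}_\th}\,\|(h^j)'\|_{L^2_t H^{k-1}_\th}\,,
\]
proving (b). For (a), split $\tilde h^j - \tilde h = (u^j - u)(h^j)' + u\bigl((h^j)' - h'\bigr)$. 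The first summand converges to zero in $L^2_t L^2_\th$ by the second multiplication inequality with $a = s' - 1$, $b = k - 1$ (note $a + b = s' + k - 2 \geq s' - 1 > 1/2$), combined with strong convergence $u^j - u \to 0$ in $C_t H^{s'-1}_\th$ and boundedness of $(h^j)'$ in $L^2_t H^{k-1}_\th$; being also bounded in $L^2_t H^{k-1}_\th$, it must then converge weakly to zero in $L^2_t H^{k-1}_\th$. The second summand converges weakly to zero because multiplication by the fixed $u$ is a bounded linear, hence weakly continuous, operator on $L^2_t H^{k-1}_\th$. Adding the two, (a) follows, closing the induction.

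\textbf{Main obstacle.} The only delicate point is the edge case $k = s$ (integer), where $u \in H^{s-1}$ only narrowly embeds into $L^\infty$. The induction-on-$k$ device is what makes the argument clean: peeling off one $D_{c^j}$ at a time avoids a direct expansion of $D_c^k$ into summands involving $u^{(k-1)}$, which in the edge case would live only in $L^2$ and would make the weak/strong product argument considerably more intricate. The only constraint on $s$ (beyond $s > 3/2$) is $s \geq k$, which is exactly what renders the multiplication $H^{s-1} \cdot H^{k-1} \hookrightarrow H^{k-1}$ continuous.
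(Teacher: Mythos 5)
Your proof is correct and follows essentially the same route as the paper's: induction peeling off one arc-length derivative at a time, the compact embedding $H^1_tH^s_\th \hookrightarrow C_tH^{s'}_\th$ together with Lem.~\ref{lem:one_over_norm_cp_continuous} to upgrade $c^j$ to strong convergence, and the same splitting $(u^j-u)(h^j)' + u\bigl((h^j)'-h'\bigr)$. The only (harmless) difference is in how the weak convergence of the two summands is verified: the paper pairs against a dense set of test functions $u \in L^2_tH^{2k-2}_\th$ and reduces to $L^2$ pairings, whereas you invoke Sobolev multiplication estimates plus the weak continuity of the bounded multiplication operator by the fixed $u$ --- both are standard and equivalent here.
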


\begin{proof}
We will show that the above hypotheses imply
\[
\begin{array}{c}
D_{c^j} h^j \rightharpoonup D_c h \text{ weakly in } L^2_t H^{k-1}_\th
\text{ and } \\
(D_{c^j} h^j)_{j \in \mb N} \text{ is bounded in } L^2_t H^{k-1}_\th\,.
\end{array}
\]
The result then follows by induction. Let $\ep$ be such that $0 < \ep < 1$ and $s-\ep > 3/2$. Since a sequence converges against a limit, if every subsequence has a subsequence converging against that same limit, we are free to work with subsequences in our argument. The embedding $H^1_tH^s_\th \hookrightarrow C_t H^{s-\ep}_\th$ is compact, and so we can choose a subsequence of $(c^j)_{j \in \mb N}$, such that $c^j \to c$ in $C_t H^{s-\ep}_\th$.

The sequence $(h^j)_{j \in \mathbb N}$ is bounded, $L^2_tH^{2k-2}_\th$ is dense in $L^2_t H^k_\th$ and so by \cite[Thm. V.1.3]{Yosida1980} it is enough show that
\[
\langle D_{c^j} h^j - D_c h, u \rangle_{L^2_\th H^{k-1}_\th} \to 0
\]
for every $u \in L^2_t H^{2k-2}_\th$. Setting $w = u + (-1)^{k-1} \p_\th^{2k-2} u$ we have $w \in L^2_tL^2_\th$ and
\begin{align*}
\Big| &\langle D_{c^j} h^j - D_c h, u \rangle_{L^2_t H^{k-1}_\th} \Big| =
\Big| \langle D_{c^j} h^j - D_c h, w \rangle_{L^2_t L^2_\th} \Big| \leq \\
&\leq\left| \int_0^1 \int_{S^1} \left\langle 
\left( |\p_\th c^j|\inv - |\p_\th c|\inv\right) \p_\th h^j
+ |\p_\th c|\inv \left( \p_\th h^j - \p_\th h \right), 
w \right\rangle \ud \th \ud t \right| \\
&\leq \left\| |\p_\th c^j|\inv - |\p_\th c|\inv \right\|_{C_t C_\th}
\| \p_\th h^j \|_{L^2_tL^2_\th} \| w \|_{L^2_tL^2_\th} +
\left|\left\langle \p_\th h^j - \p_\th h, |\p_\th c|\inv w \right\rangle_{L^2_t L^2_\th}\right|\!.
\end{align*}
Using Lem.~\ref{lem:one_over_norm_cp_continuous} with $s-\ep$, since $I=[0,1]$ is compact, we obtain $|\p_\th c^j|\inv \to |\p_\th c|\inv$ not only pointwise in $t$, but uniformly, that is in $C_t H^{s-\ep-1}_\th$, and with the help of the Sobolev embedding $H^{s-\ep-1}_\th \hookrightarrow C_\th$ also in $C_t C_\th$. The term $\| \p_\th h^j \|_{L^2_tL^2_\th}$ is bounded because weakly convergent sequences are bounded. Since $\p_\th h^j \rightharpoonup \p_\th h$ weakly in $L^2_t H^{k-1}_\th$, we obtain
\[
\left\langle \p_\th h^j - \p_\th h, |\p_\th c|\inv w \right\rangle_{L^2_t L^2_\th} \to 0\,.
\]
This shows the required weak convergence.

The boundedness of $(D_{c^j} h^j)_{j \in \mb N}$ follows from the inequality
\[
\left\| |\p_\th c^j|\inv \p_\th h^j \right\|_{L^2_t H^{k-1}_\th} \leq
C \left\| |\p_\th c^j|\inv \right\|_{C_t H^{n-\ep}_\th}
\left\| \p_\th h^j \right\|_{L^2_t H^{k-1}_\th}\,.
\]
Because $c(t) \in \mc I^s$ and $c^j \to c$ in $C_tH^{s-\ep}_\th$, the set $\{ c^j(t) \,:\, (t, j) \in I \x \mb N\}$ clearly has $|c'|$ bounded from below and thus by Lem.~\ref{lem:one_over_norm_cp_continuous} the first term on the right hand side is bounded. This concludes the proof.
\end{proof}

Now we have all the tools together to prove the main theorem about the existence of minimizers.

\subsection*{Poof of Theorem~\ref{thm:ex_of_minimizers}}
Define the energy
\[
E(c) = \int_0^1 G_{c}(\dot c, \dot c ) \ud t\,,
\]
and the set
\[
\Om_{c_0, A}H^1 = \left\{ c \in H^1(I, \mc I^n) \,:\, c(0)=c_0,\, c(1) \in A \right\}\,,
\]
of curves starting at $c_0$ and ending in $A$. It is enough to show that $E$ attains a minimum on the set $\Om_{c_0, A}H^1$, since it is shown in \cite[Lem. 2.4.3]{Klingenberg1995}, that the minimum is a minimizing geodesic between $c_0$ and $c(1) \in A$. 

Let $(c^j)_{j \in \mathbb N}$ be a minimizing sequence. Then $E(c^j)$ is bounded and we let $r^2 > 0$ be an upper bound. We have the inequality
\[
\on{dist}(c_0, c^j(t)) \leq \sqrt{E(c^j)} \leq r \qquad \forall (t, j) \in I\x\mb N\,,
\]
and we see that all curves $c^j(t)$ lie in a metric ball around $c_0$ of radius $r$. This implies via Prop~\ref{prop:sobolev_uniform_equivalent} the existence of a constant $C>0$, s.t.
\begin{equation}
C\inv \| h \|_{H^n_\th} \leq \sqrt{G_{c^j(t)}(h,h)} \leq C \| h \|_{H^n_\th}
\end{equation}
holds for all $h \in H^n$ and all curves $c^j(t)$. 

As $E(c^j)$ is bounded for $j \in \mb N$ and $c^j(0) = c_0$, it follows from
\[
\| c^j(t) \|_{H^n_\th} \leq \| c_0 \|_{H^n_\th} + 
\| c^j(t) - c_0 \|_{H^n_\th} \leq \| c_0 \|_{H^n_\th} + 
C_1 \on{dist}(c_0, c^j(t)) \leq \| c_0 \|_{H^n_\th} + C_1R\,,
\]
with the constant $C_1$ given by Lem.~\ref{lem:dist_uniform_equivalence}, together with
\[
\| c^j \|_{H^1_t H^n_\th}^2 =
\int_0^1 \| c^j \|_{H^n_\th}^2 + \| \dot c^j \|_{H^n_\th}^2 \ud t
\leq \left( \| c_0 \|_{H^n_\th}^2 + C_1R \right)^2 + C^2 E(c^j)\,,
\]
that $\|c^j\|_{H^1_t H^n_\th}$ is bounded as well. Thus there exists a weakly convergent subsequence, again denoted by $(c^j)_{j \in \mb N}$, converging to $c^\ast \in H^1_t H^n_\th$. Let $\ep$ be chosen such that $n-\ep > 3/2$ and $0 < \ep < 1$. Since the embedding $H^1_tH^n_\th \hookrightarrow C_tH^{n-\ep}_\th$ is compact, by the Aubin--Dubinskii lemma---see, e.g., \cite{Amann2000}---we can further assume that $c^j \to c^\ast$ strongly in $C_tH^{n-\ep}_\th$.

From Prop.~\ref{prop:lipschitz_basic_results} we obtain another constant $C_2 = C_2(c_0, r)$, such that
\begin{equation}
\label{eq:lower_bound_mcp}
\left| \p_\th c^j(t,\th) \right| \geq C_2\qquad \forall \th \in S^1,\, \forall t \in I,\, \forall j \in \mathbb N\,,
\end{equation}
and because of the strong convergence in $C_t H_\th^{n-\ep}$ the bound remains valid for the limit as well. In particular this shows $c^\ast(t) \in \mc I^n$ for all $t \in I$.
Weak convergence in $H^1_t H^n_\th$ also shows $c^\ast(0) = c_0$ and $c^\ast(1) \in A$, since $A$ is weakly closed, and thus $c^\ast \in\Om_{c_0, A}H^1$.

It remains to show that $c^\ast$ is a minimizer for $E$. Because $G$ is a Sobolev metric with constant coefficients, we can write $E$ as
\[
E(c) = \sum_{k=0}^n a_k \left\| D_c^k \dot c \sqrt{|c'|} \right\|^2_{L^2_tL^2_\th}
\]
with constants $a_k > 0$. Here we write $D_c$ for $D_s$ to emphasize the dependence of the arc length derivative on the curve $c$. As $c^j$ is bounded in $H^1_t H^n_\th$ and $c^j \rightharpoonup c^\ast$ weakly in $H^1_t H^n_\th$, it follows from Lem.~\ref{lem:Ds_weak_convergence} that $D^k_{c^j} \dot c^j \rightharpoonup D^k_{c^\ast} \dot c^\ast$ weakly in $L^2_t L^2_\th$. Furthermore $c^j \to c^\ast$ in $C_t H^{n-\ep}_\th$ and hence $\sqrt{|\p_\th c^j|} \to \sqrt{|\p_\th c^\ast|}$ in $C_t H^{n-1-\ep}_\th$. Since $n-1-\ep > 1/2$, the pointwise product converges weakly,
\[
D^k_{c^j} \dot c^j \sqrt{|\p_\th c^j|} \rightharpoonup D^k_{c^\ast} \dot c^\ast \sqrt{|\p_\th c^\ast|} \text{ in } L^2_tL^2_\th\,,
\]
and since the norm-squared function $h \mapsto \| h\|^2$ is weakly sequentially lower semicontinuous, it follows that
\begin{multline*}
E(c^\ast) = \sum_{k=0}^n a_k \left\| D^k_{c^\ast} \dot c^\ast \sqrt{|\p_\th c^\ast|} \right\|^2_{L^2_2 L^2_\th} \leq \\ \leq \liminf_{j \to \infty}
\sum_{k=0}^n a_k \left\| D^k_{c^j} \dot c^j \sqrt{|\p_\th c^j|} \right\|^2_{L^2_2 L^2_\th}
\leq \liminf_{j \to \infty} E(c^j)\,.
\end{multline*}
Thus $c^\ast$ is a minimizer. \qed

\subsection{Space of Smooth Immersions}

We can also consider the question whether minimizing geodesics exist in the space $\on{Imm}(S^1,\R^d)$ of smooth curves. It is a characteristic property of geodesic equations on function spaces to preserve the smoothness of initial conditions. Let $G$ be a Sobolev metric of order $n$ and $(c_0, u_0)$ an initial position and velocity, that lie in $H^m$ with $m > n$ or even in $C^\infty$. Then  the geodesic with the given initial conditions will also lie in $H^m$ or $C^\infty$ respectively. This behaviour is shared by the Euler equation \cite{Ebin1970}, the Camassa-Holm equation \cite{Kouranbaeva1999,GayBalmaz2009}, geodesic equations of general Sobolev metrics on the diffeomorphism group \cite{Smolentsev2006} as well as on the space of curves \cite{Michor2007}, immersions \cite{Bauer2011b} or Riemannian metrics \cite{Bauer2013c} to name but a few examples \cite{Bauer2014}.

It is then tempting to argue as follows: given two smooth curves $c_0, c_1$, there exists a minimizing geodesic $c(t) \in \mc I^n(S^1,\R^d)$ connecting them. The geodesic cannot lose or gain smoothness and since the endpoints are smooth, so is the whole geodesic. Unfortunately this argument is flawed. To use the preservation of smoothness along the geodesic, we need to know about the smoothness of both the initial position $c_0$ and the initial velocity $u_0$. The map
\[
(c_0, u_0) \mapsto \left(\on{Exp}_{c_0} (u_0), \p_t|_{t=1} \on{Exp}_{c_0}(tu_0)\right)
\]
preserves smoothness. Whether the map
\[
(c_0, c_1) \mapsto \on{Log}_{c_0}(c_1)
\]
does the same is a different---and a more difficult---question.

A positive answer is given in \cite{Kappeler2008} for right-invariant Sobolev metrics in a neighborhood around the identity on the diffeomorphism group of the torus and one suspects that the proof can be generalized without too much difficulty to arbitrary compact manifolds. On the space of curves the problem remains open.

\begin{openquestion*}
Let $G$ be a Sobolev metric with constant coefficients of order $n\geq 2$. If $c(t)$ is a minimizing geodesic in $\mc I^n(S^1,\R^d)$ between $c_0$ and $c_1$ as given by Thm.~\ref{thm:ex_of_minimizers} and $c_0, c_1 \in \on{Imm}(S^1,\R^d)$, does it follow that $c(t) \in \on{Imm}(S^1,\R^d)$ for all $t \in I$? In other words, can any two curves in the same connected component of $\on{Imm}(S^1,\R^d)$ be joined by a minimizing geodesic?
\end{openquestion*}

\section{Shape Space}
\label{sec:shape_space}

\subsection{Quotient Spaces}

In this section we want to transfer the completeness results from $\mc I^n(S^1,\R^d)$ and $\on{Imm}(S^1,\R^d)$ to the shape space of unparametrized curves,
\[
B_i(S^1,\R^d) = \on{Imm}(S^1,\R^d)/\on{Diff}(S^1)\,.
\]
This space is a manifold, if we restrict ourselves to the regular orbits of the $\on{Diff}(S^1)$-action. Denote by $\on{Imm}_f(S^1,\R^d)$ the set of immersions upon which $\on{Diff}(S^1)$ acts freely. We have
\[
c \in \on{Imm}_f(S^1,\R^d) \;\text{ iff }\; \big(c \o \ph = c \;\Rightarrow\; \ph = \on{Id}_{S^1} \big)\,.
\]
The set $\on{Imm}_f$ is the open and dense set of regular points for the $\on{Diff}(S^1)$-action and we denote the quotient space by
\[
B_{i,f}(S^1,\R^d) = \on{Imm}_f(S^1,\R^d)/\on{Diff}(S^1)\,.
\]
It is shown in \cite[Sect. 1.5]{Michor1991} that $B_{i,f}$ is a smooth Fr\'echet manifold and the projection $\pi : \on{Imm}_f \to B_{i,f}$ is a smooth prinicpal fibration with structure group $\on{Diff}(S^1)$. The space $B_i$ is almost a manifold; for plane curves its singularities are described in \cite[Sect. 2.5]{Michor2006c}. Since $\on{Imm}_f$ is open and dense in $\on{Imm}$, so is $B_{i,f}$ in $B_i$.

We will also need the shape space of Sobolev immersions,
\[
\mc B^n(S^1,\R^d) = \mc I^n(S^1,\R^d)/ \mc D^n(S^1)\,.
\]
The space $\mc B^n$ does not appear to carry the structure of a manifold. To see this, note that for a plane curve $c \in \on{Imm}_f$, a chart arout $\pi(c) \in B_{i,f}$ is given by
\[
\Ph : \{ a\,:\,\| a \|_{C^1} < \ep \}\subset C^\infty \to B_{i,f}\,,\quad a \mapsto \pi(c + an_c)\,,
\]
with $\ep$ sufficiently small. However, if $c \in \mc I^n$, then the normal field $n_c$ lies only in $H^{n-1}$. Similarly the action of $\mc D^n(S^1)$ on $\mc I^n$ is only continuous and not smooth. We will show that the space $\mc B^n$ is the metric completion of $B_i$ and $B_{i,f}$.

While $\mc B^n$ may not be a manifold, it is a Hausdorff topological space. This can be shown more generally for the quotient of $\mc I^s(M,N)$, where $M$ is a compact and $N$ a finite-dimensional manifold, both without boundary. The following is a generalization of the results in \cite{Michor1991} to the Sobolev category.

\begin{proposition}
\label{prop:shape_space_hausdorff}
Let $M, N$ be finite-dimensional manifolds without boundary, $M$ compact and $s \in \R$ with $s > \on{dim}M/2+1$. Then $\mc D^s(M)$ acts continuously on $\mc I^s(M,N)$ and the quotient space $\mc I^s(M,N)/\mc D^s(M)$ is Hausdorff.
\end{proposition}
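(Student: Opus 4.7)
The proposition has two assertions: continuity of the action and Hausdorffness of the quotient. For continuity, I would invoke standard results on composition of Sobolev maps. The hypothesis $s > \dim M/2 + 1$ gives the embedding $H^s \hookrightarrow C^1$, which is precisely what is needed both to make $\mc D^s(M)$ a topological group under composition and to ensure that $(f,\varphi) \mapsto f \circ \varphi$ from $\mc I^s(M,N) \times \mc D^s(M)$ to $\mc I^s(M,N)$ is jointly continuous. The left-composition version is exactly Lem.~\ref{lem:left_composition} in this paper; the right-composition version can be proved by combining Lem.~\ref{lem:left_composition} with local coordinate charts and the continuity of $\varphi \mapsto 1/|\varphi'|$ provided by Lem.~\ref{lem:one_over_norm_cp_continuous}, along the lines of Inci-Kappeler-Topalov or Ebin-Marsden.

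For Hausdorffness, since $\mc D^s(M)$ acts continuously on the Hausdorff space $\mc I^s(M,N)$, it suffices to show that the orbit equivalence relation
\[
R = \left\{ (f_1,f_2) \in \mc I^s \times \mc I^s \,:\, \exists \varphi \in \mc D^s(M),\, f_2 = f_1 \circ \varphi \right\}
\]
is closed. I take sequences $f_1^j \to f_1$ and $f_2^j \to f_2$ in $\mc I^s$ with $f_2^j = f_1^j \circ \varphi^j$ for some $\varphi^j \in \mc D^s(M)$, and must produce $\varphi \in \mc D^s(M)$ realizing $f_2 = f_1 \circ \varphi$. By Sobolev embedding the convergence holds in $C^1$, and the $f_i^j$ are uniformly $C^1$-immersions, so the pseudo-inverses $(Df_1^j)^+$ are uniformly bounded by compactness of $M$. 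The chain rule $Df_2^j = (Df_1^j|_{\varphi^j})\cdot D\varphi^j$ gives $D\varphi^j = (Df_1^j|_{\varphi^j})^+ Df_2^j$, and exchanging the roles of $f_1^j$ and $f_2^j$ produces the analogous bound for $D(\varphi^j)^{-1}$. Hence the family $\{\varphi^j\}$ is equi-bi-Lipschitz on $M$, and Arzel\`a--Ascoli extracts a subsequence with $\varphi^j \to \varphi$ in $C^0(M,M)$, where $\varphi$ is a bi-Lipschitz homeomorphism. Passing to the limit in $f_2^j = f_1^j \circ \varphi^j$ gives the identity $f_2 = f_1 \circ \varphi$.

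It remains to bootstrap $\varphi$ from $C^0$ to $H^s$. Around each $p \in M$, the immersion $f_1$ restricts to an embedding on a small neighborhood $U$ of $\varphi(p)$ and admits an $H^s$ left-inverse $\rho$ defined on an open set in $N$ containing $f_1(\varphi(p))$ (built from a local $H^s$ tubular neighborhood of the embedded piece). For $p'$ close enough to $p$, the $C^0$-proximity of $\varphi(p')$ to $\varphi(p)$ forces $\varphi(p') \in U$, so the identity $\varphi = \rho \circ f_2$ holds on a neighborhood of $p$; the Sobolev composition theorem, again using $s > \dim M/2 + 1$, then gives $\varphi \in H^s$ near $p$. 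A partition-of-unity argument assembles these local pieces into $\varphi \in H^s(M,M)$, and applying the same argument with $f_1$ and $f_2$ swapped shows $\varphi^{-1} \in H^s$, so $\varphi \in \mc D^s(M)$, as required. The main technical obstacle is this local-to-global bootstrap for immersions that are not embeddings: one must select a consistent branch of the multi-valued inverse $f_1^{-1}$ across $M$, and this is exactly what the $C^0$-limit $\varphi$ supplies.
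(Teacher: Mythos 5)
Your overall architecture is sound: reducing Hausdorffness to closedness of the orbit relation (the quotient map of a group action is open, so this suffices), extracting an equi-bi-Lipschitz bound on $(\varphi^j)$ from the chain rule and the uniformly bounded pseudo-inverses, applying Arzel\`a--Ascoli to get a bi-Lipschitz $C^0$-limit $\varphi$, and passing to the limit in $f_2^j = f_1^j \o \varphi^j$ are all correct. This is in substance the scheme of the Cervera--Mascar\'o--Michor argument \cite{Michor1991} that the paper's own proof simply cites (together with \cite{Inci2013} for continuity of the action), carried out by hand.

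The gap is in the final bootstrap from $\varphi \in C^0$ to $\varphi \in H^s$. First, a ``local $H^s$ tubular neighbourhood'' of the embedded piece $f_1(U)$ costs a derivative: a normal frame along the $H^s$-immersion $f_1$ is only of class $H^{s-1}$, so the retraction $\rho$ built from it is only $H^{s-1}$. Second, and more seriously, even if $\rho$ were $H^s$ on an open subset $V$ of $N$, the step ``$\rho \o f_2 \in H^s$ by the Sobolev composition theorem'' is not justified: results such as Lem.~\ref{lem:left_composition} require the outer map to be smooth (or at least $C^k$ with $k \geq s$), and when $\on{dim} N > \on{dim} M$ the operation $\rho \mapsto \rho \o f_2$ is, locally, a restriction to a lower-dimensional submanifold, which on $H^s$ loses $(\on{dim} N - \on{dim} M)/2$ derivatives by the trace theorem. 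The repair is to make the problem equidimensional before inverting anything: fix $p$, work in a chart of $N$ around $f_1(\varphi(p))$, set $T = \on{im} Df_1(\varphi(p))$ and let $\pi_T$ be the \emph{linear} (hence smooth) projection onto $T \cong \R^{\on{dim} M}$. Then $\pi_T \o f_1$ and $\pi_T \o f_2$ are $H^s$ maps between open subsets of $\R^{\on{dim} M}$ with invertible differentials near $\varphi(p)$ and $p$ respectively (note $\on{im} Df_2(p) = T$, since $\on{im} Df_2^j(p) \subseteq \on{im} Df_1^j(\varphi^j(p)) \to T$), and $\varphi = (\pi_T \o f_1)\inv \o (\pi_T \o f_2)$ near $p$. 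Inversion and composition of local $C^1$-diffeomorphisms of class $H^s$ in equal dimension preserve $H^s$ for $s > \on{dim} M/2 + 1$ --- this is exactly the groupoid property underlying the fact that $\mc D^s(M)$ is a group, see \cite{Inci2013} --- so $\varphi \in H^s$ locally, and your partition-of-unity globalization and the automatic regularity of $\varphi\inv$ then go through as you describe.
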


\begin{proof}
The continuity of the action is shown in \cite[Prop. 3.10]{Inci2013}. To show that the quotient space is Hausdorff, the proof of \cite[Thm. 2.1]{Michor1991}---showing the same statement for the quotient space $\on{Imm}_{\on{prop}}(M,N)/\on{Diff}(M)$ in the smooth category---can be reused without changes. Lemmas 2.2, 2.3, 2.5 and 2.8, Claim 2.6 as well as Construction 2.7 in \cite{Michor1991} are valid more generally for $C^1$-immersions.
\end{proof}

\subsection{Connectivity}
\label{sec:connectivity}

The connectivity of the space of immersions and of the shape space depends on the dimension $d$ of the ambient space. For $d=2$ the spaces $\on{Imm}_f$, $\on{Imm}$ and $\mc I^n$ decompose into connected components according to the degree of the curve \cite[Sect. 2.9]{Michor2006c}. The groups $\on{Diff}(S^1)$ and $\mc D^n(S^1)$ also have two connected components, the set of orientation-preserving and orientation-reversing diffeomorphisms. Orientation-preserving diffeomorphisms respect the degree of the curve while orientation-reversing diffeomorphisms map curves of degree $p$ to curves of degree $-p$. Denote by $\on{Imm}_p$ curves of degree $p$ and by $\on{Diff}^+(S^1)$ the orientation-preserving subgroup. Then the connected components $B_{i,p}$ of $B_i$ correspond to the non-negative degrees in the sense that $B_i = \bigcup_{p\geq 0} B_{i,p}$ and
\[
\pi\inv(B_{i,p}) = \on{Imm}_p \cup \on{Imm}_{-p}\,.
\]
For $p\neq 0$ we have
\[
B_{i,p} = \left( \on{Imm}_p \cup \on{Imm}_{-p} \right)/ \on{Diff}(S^1)
\cong \on{Imm}_p / \on{Diff}^+(S^1)\,,
\]
and the latter is a quotient of a connected space. For degree $p=0$ one simply has 
$B_{i,0} = \on{Imm}_0 / \on{Diff}(S^1)$. Similar statements hold for the spaces $\on{Imm}_f$ and $\mc I^n$. See \cite{Michor2006c, Kodama2006} for details.

For $d > 2$ the situation is simpler, since then $\on{Imm}_f$, $\on{Imm}$ and $\mc I^n$ are connected and path-connected and thus so are $B_{i,f}$, $B_i$ and $\mc B^n$.

\subsection{Completeness}

Now let us equip $\mc I^n$ with a Sobolev metric $G$ of order $n \geq 2$ with constant coefficients. Then $(\mc I^n, \on{dist})$ with the induced geodesic distance is a complete metric space and we can project the metric to a metric on $\mc B^n$ using the following general lemma.

\begin{lemma}
\label{lem:project_complete_metric}
Let $(X,d)$ be a metric space upon which the group $G$ acts by isometries. If the quotient space $X/G$ is Hausdorff, then
\[
d(G.x, G.y) := \inf_{g, h \in G} d(g.x, h.y) = \inf_{h \in G} d(x, h.y)
\]
defines a metric on $X/G$, that is compatible with the quotient topology on $X/G$. 

If $(X,d)$ is complete, then so is $(X/G,d)$.
\end{lemma}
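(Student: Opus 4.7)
The plan is to verify the three metric axioms in turn, then show that the induced topology agrees with the quotient topology, and finally reduce completeness of $X/G$ to completeness of $X$ by lifting a sufficiently fast Cauchy sequence back to $X$. The isometry assumption does most of the algebraic work; the Hausdorff hypothesis is needed in exactly one place, namely nondegeneracy.

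First I would observe that since $G$ acts by isometries, $d(g.x, h.y) = d(x, g\inv h.y)$, so the double infimum collapses to $\inf_{k \in G} d(x, k.y)$; symmetry in $x, y$ then follows from $d(x, k.y) = d(k\inv.x, y)$. For the triangle inequality, given $\ep > 0$, I would pick $h_1, h_2 \in G$ realizing the infima $d(G.x, G.y)$ and $d(G.y, G.z)$ up to $\ep$; the isometry property gives $d(h_1.y, h_1 h_2.z) = d(y, h_2.z)$, so
\[
d(G.x, G.z) \leq d(x, h_1 h_2.z) \leq d(x, h_1.y) + d(y, h_2.z) < d(G.x, G.y) + d(G.y, G.z) + 2\ep\,,
\]
and letting $\ep \to 0$ yields the inequality.

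The delicate step is nondegeneracy. If $d(G.x, G.y) = 0$, I would choose $h_n \in G$ with $h_n.y \to x$ in $X$ and claim that every orbit is closed in $X$: if $z \notin G.y$, then $\pi(z) \neq \pi(y)$ in $X/G$, and by Hausdorffness one finds disjoint open neighborhoods in $X/G$; the $\pi$-preimage of the one containing $\pi(z)$ is an open $X$-neighborhood of $z$ disjoint from $G.y$. Hence $G.y$ is closed, so the limit $x$ of $(h_n.y)$ lies in $G.y$, giving $G.x = G.y$. For the topological statement, I would verify both inclusions: the $\pi$-preimage of a $d$-ball $\{G.y \,:\, d(G.x, G.y) < r\}$ is the open set $\bigcup_{g \in G} B_X(g.x, r)$, so $d$-open sets are quotient-open; conversely, if $W \subseteq X/G$ is quotient-open and $G.x \in W$, then some $B_X(x, r) \subseteq \pi\inv(W)$, and the definition of $d$ immediately gives the corresponding $d$-ball around $G.x$ inside $W$.

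To show completeness, I would take a Cauchy sequence $(G.x_n)$ in $X/G$ and pass to a subsequence with $d(G.x_{n}, G.x_{n+1}) < 2^{-n}$. Starting from $y_1 = x_1$ and using the infimum definition at each stage, I can recursively choose $y_{n+1} \in G.x_{n+1}$ with $d(y_n, y_{n+1}) < 2^{-n}$; then $(y_n)$ is Cauchy in $X$, converges to some $y^\ast \in X$ by assumption, and $d(G.y_n, G.y^\ast) \leq d(y_n, y^\ast) \to 0$, so the subsequence converges in $X/G$; a Cauchy sequence with a convergent subsequence converges. The only conceptual obstacle is the orbit-closedness argument in the nondegeneracy step; everything else is routine manipulation of the definition together with the isometry hypothesis.
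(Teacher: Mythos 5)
Your proposal is correct and follows essentially the same route as the paper: collapse the double infimum via the isometry property, identify preimages of $d$-balls with saturated open sets to match the quotient topology, and lift a rapidly Cauchy sequence by choosing representatives recursively. The only cosmetic difference is in nondegeneracy, where you show orbits are closed directly from the Hausdorff hypothesis, while the paper instead notes that a pseudometric inducing a Hausdorff topology is automatically a metric; both use the hypothesis in the same place and to the same effect.
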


\begin{proof}
Since $G$ acts on $X$ by isometries, we have $d(g.x, h.y) = d(x, g\inv h.y)$. Then
\[
d(G.x, G.z) = \inf_{g \in G} d(x, g.z) \leq d(x, h.y) + \inf_{g \in G} d(h.y, g.z)
= d(x, h.y) + d(G.y, G.z)\,.
\]
As $h \in G$ is arbitrary taking the infimum shows the triangle inequality. Symmetry is obvious, as is the property $d(G.x,G.x) = 0$.

To see that the topologies coincide, denote by $B_X(x,\ep)$ and $B_{X/G}(G.x,\ep)$ the open balls in $X$ and $X/G$ respectively and by $\pi : X \to X/G$ the canonical projection. Then
\begin{align*}
\pi\inv\left(B_{X/G}(G.x,\ep)\right) 
&= \left\{ y \,:\, \inf_{h\in G} d(x,h.y) < \ep \right\} \\
&= \left\{ g.y \,:\, g\in G,\, y \in B_X(x,\ep) \right\}
= G.B_X(x,\ep)
\end{align*}
and since $G.B_X(x,\ep)$ is open in $X$, it follows that $B_{X/G}(x,\ep)$ is open in $X/G$.

Now let $U \subseteq X/G$ be open, $G.x \in U$ and $\ep$ be such that $B_X(x,\ep) \subseteq \pi\inv(U)$. If $d(G.x,G.y) < \ep$ for some $y$, then $d(x, g.y) < \ep$ for some $g$ and hence $g.y \in B_X(x,\ep)$, implying $G.y = \pi(g.y) \in U$. Thus $B_{X/G}(G.x,\ep) \subseteq U$ and the topology induced by $d$ coincides with the quotient topology. As $X/G$ is assumed to be Hausdorff, it follows that $d(G.x,G.y)=0$ implies $G.x = G.y$.

Now let $(X,d)$ be complete and $(G.x_n)_{n \in \mb N}$ a Cauchy sequence. We can choose a subsequence, such that $d(G.x_n,G.x_{n+1}) < 2^{-n}$ holds for all $n\in \mb N$. Next we choose representatives of the orbit with $d(x_n,x_{n+1}) < d(G.x_n,G.x_{n+1}) + 2^{-n}$. Then
\[
d(x_n,x_{n+k}) \leq \sum_{i=n}^{n+k-1} d(x_i,x_{i+1}) \leq
\sum_{i=n}^{n+k-1} d(G.x_i,G.x_{i+1}) + 2^{-i}
\leq 2^{2-n}(1-2^{-k})\,,
\]
showing that $(x_n)_{n \in \mb N}$ is a Cauchy sequence in $X$. Let $x$ be the limit. Then
$\lim G.x_n = \lim \pi(x_n) = G.x$ and thus $(X/G,d)$ is complete.
\end{proof}

With the help of Lem.~\ref{lem:project_complete_metric} we can show that $(\mc B^n, \on{dist})$ is a complete metric space and furthermore the infimum in the definition of the quotient metric is attained.

\begin{theorem}
Let $n \geq 2$ and $G$ be a Sobolev metric of order $n$ with constant coefficients. Then $(\mc B^n(S^1,\R^d), \on{dist})$ with the quotient metric induced by the geodesic distance on $(\mc I^n(S^1,\R^d), G)$ is a complete metric space.

Given $C_1, C_2 \in \mc B^n(S^1,\R^d)$ in the same connected component, there exist $c_1, c_2 \in \mc I^n(S^1,\R^d)$ with $c_1 \in \pi\inv(C_1)$ and $c_2 \in \pi\inv(C_2)$, such that
\[
\on{dist}_{\mc B}(C_1, C_2) = \on{dist}_{\mc I}(c_1, c_2)\,;
\]
equivalently, the infimum in
\[
\on{dist}_{\mc B}(\pi(c_1), \pi(c_2)) = \inf_{\ph \in \mc D^n(S^1)} \on{dist}_{\mc I}(c_1, c_2 \o \ph)
\]
is attained.
\end{theorem}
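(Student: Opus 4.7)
For the completeness assertion the plan is to apply Lem.~\ref{lem:project_complete_metric} to $(X,d)=(\mc I^n(S^1,\R^d),\on{dist})$ with the group $G=\mc D^n(S^1)$. The three hypotheses are (i) $\mc D^n(S^1)$ acts by isometries, which follows from reparametrization invariance of the Sobolev metric: arc-length integration makes both the tangent bilinear form $G_c(h,h)$ and hence the length of any path invariant under right composition $c \mapsto c \o \ph$; (ii) the quotient $\mc B^n(S^1,\R^d)$ is Hausdorff, supplied by Prop.~\ref{prop:shape_space_hausdorff}; and (iii) $(\mc I^n(S^1,\R^d),\on{dist})$ is complete, which is Thm.~\ref{thm:metric_completeness}. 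The lemma then immediately yields the quotient metric $\on{dist}_{\mc B}$ together with its completeness.

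For the attainment assertion my plan is to apply Thm.~\ref{thm:ex_of_minimizers} with base point $c_1$ and target set
\[
A = \{c_2 \o \ph : \ph \in \mc D^n(S^1)\}\,.
\]
The hypothesis that $C_1,C_2$ lie in the same connected component of $\mc B^n(S^1,\R^d)$ supplies some $\ph_0 \in \mc D^n(S^1)$ with $c_2 \o \ph_0$ in the connected component of $c_1$, so the connectivity hypothesis of Thm.~\ref{thm:ex_of_minimizers} is met. That theorem then produces a $\ps \in \mc D^n(S^1)$ together with a minimizing geodesic from $c_1$ to $c_2 \o \ps$; because right composition acts by isometries, this geodesic realizes $\inf_\ph \on{dist}_{\mc I}(c_1, c_2 \o \ph) = \on{dist}_{\mc B}(C_1,C_2)$, which is the claim.

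The main obstacle is verifying the remaining hypothesis of Thm.~\ref{thm:ex_of_minimizers}: that the orbit $A$ is weakly closed in $\mc I^n$. Suppose $c_2 \o \ph_j \rightharpoonup c^*$ weakly in $H^n$ with $c^* \in \mc I^n$; one must produce $\ph^* \in \mc D^n(S^1)$ with $c^* = c_2 \o \ph^*$. By compactness of the embedding $H^n \hookrightarrow C^1$ (valid for $n\geq 2$), $|(c_2 \o \ph_j)'| \to |(c^*)'|$ uniformly; since $c^*$ is an immersion, $|(c^*)'| \geq \de > 0$, so $|(c_2 \o \ph_j)'| \geq \de/2$ for large $j$. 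Combined with the fixed uniform upper and lower bounds on $|c_2'|$ (themselves consequences of $c_2 \in \mc I^n$), this forces
\[
\ph_j' = \frac{|(c_2 \o \ph_j)'|}{|c_2'(\ph_j)|}
\]
to be uniformly bounded above and below by positive constants. Iteratively solving the iterated-chain-rule expansion
\[
(c_2 \o \ph_j)^{(k)} = c_2'(\ph_j)\,\ph_j^{(k)} + (\text{lower order in }\ph_j)
\]
for the top-order derivative $\ph_j^{(k)}$---admissible because $|c_2'|$ is bounded away from zero---shows $(\ph_j)_{j\in\mb N}$ is bounded in $H^n(S^1,S^1)$. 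Extracting a subsequence with $\ph_j \rightharpoonup \ph^*$ in $H^n$ and $\ph_j \to \ph^*$ in $C^0$ gives the pointwise identity $c^* = c_2 \o \ph^*$, and the uniform positive lower bound on $\ph_j'$ passes to $\ph^{*\prime}$, so $\ph^* \in \mc D^n(S^1)$ and $c^* \in A$. The bookkeeping in this last step---extracting uniform $H^n$-control of $\ph_j$ from $H^n$-control of $c_2 \o \ph_j$---is the key technical difficulty.
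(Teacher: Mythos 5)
Your completeness argument coincides with the paper's: Hausdorffness of the quotient (Prop.~\ref{prop:shape_space_hausdorff}), completeness of $(\mc I^n,\on{dist})$ (Thm.~\ref{thm:metric_completeness}) and Lem.~\ref{lem:project_complete_metric}. For the attainment of the infimum you take a genuinely different route. The paper explicitly declines to apply Thm.~\ref{thm:ex_of_minimizers} to $A=c_2\o\mc D^n(S^1)$, remarking that it does not know whether this orbit is weakly closed; instead it reruns the direct method on the path space with endpoint constraint $c(1)\in c_2\o\mc D^n(S^1)$, uses the strong convergence $c^j(1)\to c^\ast(1)$ in $H^{n-\ep}_\th$ together with closedness of orbits at the level $n-\ep$ to write $c^\ast(1)=c_2\o\ph$ with $\ph\in\mc D^{n-\ep}(S^1)$, and then bootstraps $\ph$ into $\mc D^n(S^1)$. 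You instead prove weak closedness of the orbit directly, which lets you quote Thm.~\ref{thm:ex_of_minimizers} verbatim; your argument is sound, since the hypothesis $c^\ast\in\mc I^n$ is precisely what pinches $|\ph_j'|$ between positive constants, after which the $H^n$-bound on $\ph_j$ follows. Two remarks. First, you can avoid the Fa\`a di Bruno bookkeeping entirely by borrowing the paper's normalization: assume w.l.o.g.\ that $c_2$ has constant speed, so that $|\ph_j'|=\tfrac{2\pi}{\ell_{c_2}}\,|(c_2\o\ph_j)'|$; since $(c_2\o\ph_j)'$ is bounded in $H^{n-1}$ and $|(c_2\o\ph_j)'|$ is bounded away from zero, Lem.~\ref{lem:left_composition} gives the $H^{n-1}$-bound on $\ph_j'$ in one step. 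Second, make explicit two routine points you gloss over: pass to a subsequence of constant orientation so that $\ph_j'$ has a fixed sign, and work with normalized lifts $\ph_j:\R\to\R$ with $\ph_j(\th+2\pi)=\ph_j(\th)\pm 2\pi$ so that ``bounded in $H^n$'' and the weak limit $\ph^\ast$ are meaningful; the limit then has $|\ph^{\ast\prime}|$ bounded below and descends to an element of $\mc D^n(S^1)$. What your route buys is a self-contained strengthening---the orbits are in fact weakly closed relative to $\mc I^n$, settling the doubt the paper sidesteps---while the paper's route buys a shorter proof whose regularity bootstrap is a one-line computation.
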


\begin{proof}
It is shown in Prop.~\ref{prop:shape_space_hausdorff} that $\mc B^n(S^1,\R^d)$ is Hausdorff and in Thm.~\ref{thm:metric_completeness}, that $(\mc I^n(S^1,\R^d), \on{dist}_{\mc I})$ is complete. Then by Lem.~\ref{lem:project_complete_metric} it follows that $(\mc B^n(S^1,\R^d), \on{dist}_{\mc B})$ is a complete metric space.

Fix $c_1, c_2 \in \mc I^n(S^1,\R^d)$. As in the proof of Thm.~\ref{thm:ex_of_minimizers} we consider the energy
\[
E(c) = \int_0^1 G_{c}(\dot c, \dot c ) \ud t\,,
\]
this time on the set
\[
\Om_{c_1, c_2 \o \mc D^n(S^1)}H^1 = \left\{ c \in H^1(I, \mc I^n) \,:\, c(0)=c_1,\, c(1) \in c_2 \o \mc D^n(S^1) \right\}\,.
\]
We cannot use Thm.~\ref{thm:ex_of_minimizers} directly, because the orbit $c_2 \o \mc D^n(S^1)$ is closed, but we don't know if it also weakly closed. Instead, we choose a minimizing sequence $(c^j)_{j \in \mathbb N}$, i.e.,
\[
\lim_{j \to \infty} E(c^j) = \on{dist}_{\mc B}(\pi(c_1), \pi(c_2))\,.
\]
Following the proof of Thm.~\ref{thm:ex_of_minimizers} we can pass to a subsequence, again denoted by $(c^j)_{j \in \mathbb N}$, converging weakly $c^j \rightharpoonup c^\ast$ in $H^1_t H^n_\th$ and strongly $c^j \to c^\ast$ in $C_t H^{n-\ep}_\th$ to some $c^\ast \in H^1_t H^n_\th$, where $\ep$ is chosen such that $n-\ep > \frac 32$ and $0 < \ep < 1$. The property $E(c^\ast) \leq \liminf_{j \to \infty} E(c^j)$ continues to hold and we only need to show that $c^\ast(1) \in c_2 \o \mc D^n(S^1)$ to complete the proof.

To do this, note that $c^j(1) \in c_2 \o \mc D^n(S^1)$ for all $j \in \mathbb N$ and so using Prop.~\ref{prop:shape_space_hausdorff} and the strong convergence $c^j(1) \to c^\ast(1)$ in $H^{n-\ep}_\th$, we obtain $c^\ast(1) \in c_2 \o \mc D^{n-\ep}(S^1)$. Let $c^\ast(1) = c_2 \o \ph$ with $\ph \in \mc D^{n-\ep}(S^1)$. W.l.o.g. we assume that $c_2$ has constant speed and we differentiate,
\[
\left| c^\ast(1)' \right| = |c_2'| \o \ph\cdot \ph' = \frac{\ell_{c_2}}{2\pi} \ph'\,.
\]
This shows $\ph' \in H^{n-1}_{\th}$ and hence $\ph \in \mc D^n(S^1)$. Thus $c^\ast(1) \in c_2 \o \mc D^n(S^1)$ and we are done.
\end{proof}

\subsection{Length Space} The Sobolev shape space $(\mc B^n(S^1,\R^d), \on{dist})$ is also a length space.

\begin{theorem}
\label{thm:sob_shape_complete}
Let $n \geq 2$ and $G$ be a Sobolev metric of order $n$ with constant coefficients. Then $(\mc B^n(S^1,\R^d), \on{dist}_{\mc B})$ with the induced metric is a length space and any two shapes in the same connected component can be joined by a minimizing geodesic.
\end{theorem}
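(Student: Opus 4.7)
The plan is to reduce both assertions to the preceding theorem by projecting the minimising geodesic it produces in $\mc I^n$ down to $\mc B^n$, and checking that the projection still realises the quotient distance. Fix $C_1, C_2 \in \mc B^n(S^1,\R^d)$ in the same connected component. The preceding theorem furnishes representatives $c_1 \in \pi\inv(C_1)$, $c_2 \in \pi\inv(C_2)$ with $\on{dist}_{\mc I}(c_1, c_2) = \on{dist}_{\mc B}(C_1, C_2)$, together with an energy minimiser $c : I \to \mc I^n(S^1,\R^d)$ joining them. Since $G$ is a smooth strong Riemannian metric on $\mc I^n$, this energy minimiser is a geodesic parametrised at constant speed, and
\[
\on{dist}_{\mc I}(c(s), c(t)) = (t-s)\, L_{\mc I}(c) = (t-s)\, \on{dist}_{\mc B}(C_1, C_2)\,,\qquad 0 \leq s \leq t \leq 1\,.
\]

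Next I would set $\ga = \pi \o c$ and invoke Lem.~\ref{lem:project_complete_metric} to obtain the pointwise estimate
\[
\on{dist}_{\mc B}(\ga(s), \ga(t)) \leq \on{dist}_{\mc I}(c(s), c(t)) = (t-s)\, \on{dist}_{\mc B}(C_1, C_2)\,.
\]
Summing this over an arbitrary partition $0 = t_0 < \dots < t_N = 1$ and comparing with the triangle inequality $\sum_i \on{dist}_{\mc B}(\ga(t_{i-1}), \ga(t_i)) \geq \on{dist}_{\mc B}(C_1, C_2)$ forces equality in every summand, so $\on{dist}_{\mc B}(\ga(s), \ga(t)) = (t-s)\, \on{dist}_{\mc B}(C_1, C_2)$. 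Thus $\ga$ is a minimising geodesic in the metric-space sense between $C_1$ and $C_2$, giving the second assertion.

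For the length-space property, the same pointwise estimate bounds the metric length
\[
L_{\mc B}(\ga) = \sup_P \sum_i \on{dist}_{\mc B}(\ga(t_{i-1}), \ga(t_i)) \leq L_{\mc I}(c) = \on{dist}_{\mc B}(C_1,C_2)\,,
\]
while the triangle inequality provides the reverse bound, so $L_{\mc B}(\ga) = \on{dist}_{\mc B}(C_1, C_2)$; taking the infimum over continuous paths from $C_1$ to $C_2$ in $\mc B^n$ then yields $\inf_\ga L_{\mc B}(\ga) = \on{dist}_{\mc B}(C_1, C_2)$, so $(\mc B^n, \on{dist}_{\mc B})$ is a length space. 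The only delicate step in the entire argument is the attainment of the orbit infimum in $\mc D^n(S^1)$, which is precisely what the preceding theorem established; the remainder is a purely metric projection argument and no new analytic obstacle is expected.
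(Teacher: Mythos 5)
Your argument is correct, and it reaches the conclusion by a somewhat different route than the paper. Both proofs share the same core mechanism: lift $C_1,C_2$ to representatives realizing the quotient distance, take the minimizing geodesic $c$ in $\mc I^n(S^1,\R^d)$ from Thm.~\ref{thm:ex_of_minimizers}, project it, and use the elementary bound $\on{dist}_{\mc B}(\pi(x),\pi(y)) \leq \on{dist}_{\mc I}(x,y)$ together with the triangle inequality to force equality. The difference lies in how the conclusion is extracted. The paper only verifies that $\pi\bigl(c\bigl(\tfrac12\bigr)\bigr)$ is a midpoint and then invokes the completeness of $(\mc B^n,\on{dist}_{\mc B})$ plus the midpoint criterion of Burago--Burago--Ivanov to obtain both the length-space structure and the existence of minimizing geodesics. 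You instead run the equality-forcing argument over an arbitrary partition, which shows directly that $\ga = \pi\o c$ satisfies $\on{dist}_{\mc B}(\ga(s),\ga(t)) = (t-s)\on{dist}_{\mc B}(C_1,C_2)$ for all $s\leq t$, i.e.\ that $\ga$ itself is the minimizing geodesic; the length-space property then follows for free since $L_{\mc B}(\ga)$ equals the distance. Your version is more self-contained (no appeal to the abstract midpoint theorem, and no use of completeness of $\mc B^n$ at this stage) and has the added merit of exhibiting the $\mc B^n$-geodesic explicitly as the projection of the $\mc I^n$-geodesic; the paper's version is shorter on the page given the cited machinery, since it only needs to inspect the single parameter value $t=\tfrac12$. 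The one point to keep straight is that your application of Thm.~\ref{thm:ex_of_minimizers} with $A=\{c_2\}$ is legitimate because singletons are weakly closed and the representatives produced by the preceding theorem lie in the same connected component; with that noted, there is no gap.
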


Here a minimizing geodesic is to be understood in the sense of metric spaces, i.e., a curve $\ga : I \to X$ into a metric space $(X,d)$ is a minimizing geodesic, if
\[
d(\ga(t), \ga(s)) = \la |t -s|
\]
holds for some $\la > 0$ and all $t, s \in I$; see \cite{Burago2001}.

\begin{proof}
Since $(\mc B^n(S^1,\R^d), \on{dist}_{\mc B})$ is a complete metric space, using \cite[Thm.~2.4.16]{Burago2001}, it is enough to show that for every $C_0, C_1 \in \mc B^n(S^1,\R^d)$ in the same connected component there exists a midpoint, that is a point $D$ with
\[
\on{dist}_{\mc B}(C_0,D) = \on{dist}_{\mc B}(D,C_1) = \frac 12 \on{dist}_{\mc B}(C_0,C_1)\,.
\]
Using Thm.~\ref{thm:sob_shape_complete} we can lift $C_0,C_1$ to $c_1,c_2 \in \mc I^n(S^1,\R^d)$ lying in the same connected component, such that $C_i = \pi(c_i)$ and
\[
\on{dist}_{\mc B}(C_0,C_1) = \on{dist}_{\mc I}(c_0, c_1)\,.
\]
Furthermore by Thm.~\ref{thm:ex_of_minimizers} there exists a minimizing geodesic $c(t)$ connecting $c_0, c_1$.

We claim that $\pi\left(c\left(\tfrac 12\right)\right)$ is a midpoint between $C_0, C_1$. Set $C(t) = \pi(c(t))$. If
\[
\on{dist}_{\mc B}\left(C_0, C\left(\tfrac 12\right)\right) = 
\on{dist}_{\mc I}\left(c_0, c\left(\tfrac 12\right)\right)
\text{ and }
\on{dist}_{\mc B}\left(C\left(\tfrac 12\right), C_1\right) = 
\on{dist}_{\mc I}\left(c\left(\tfrac 12\right), c_1\right)\,,
\]
then we are done. So assume that at least one of
\[
\on{dist}_{\mc B}\left(C_0, C\left(\tfrac 12\right)\right) < 
\on{dist}_{\mc I}\left(c_0, c\left(\tfrac 12\right)\right)
\text{ or }
\on{dist}_{\mc B}\left(C\left(\tfrac 12\right), C_1\right) < 
\on{dist}_{\mc I}\left(c\left(\tfrac 12\right), c_1\right)\,,
\]
holds with a strict inequality. Then using the triangle inequality we calculate
\begin{align*}
\on{dist}_{\mc B}(C_0,C_1) &\leq
\on{dist}_{\mc B}\left(C_0, C\left(\tfrac 12\right)\right) +
\on{dist}_{\mc B}\left(C\left(\tfrac 12\right), C_1\right) \\
&< \on{dist}_{\mc I}\left(c_0, c\left(\tfrac 12\right)\right) +
\on{dist}_{\mc I}\left(c\left(\tfrac 12\right), c_1\right) 
= \on{dist}_{\mc I}(c_0,c_1)\,,
\end{align*}
and thus arrive at a contradiction. Hence $C\left(\tfrac 12\right)$ is a midpoint between $C_0$ and $C_1$ and $(\mc B^n(S^1,\R^d), \on{dist}_{\mc B})$ is a complete length space.
\end{proof}

\subsection{Smooth Shape Spaces}
The dense inclusions $\on{Imm}_f \subset \on{Imm} \subset \mc I^n$ imply that the inclusions
\begin{equation}
\label{eq:shape_dense}
B_{i,f}(S^1,\R^d) \subset B_i(S^1,\R^d) \subset \mc I^n(S^1,\R^d)/\on{Diff}(S^1)
\end{equation}
are also dense. As $\on{Diff}(S^1) \subset \mc D^n(S^1)$, there is a natural continuous projection
\begin{equation}
\label{eq:sobolev_shape_proj}
\mc I^n(S^1,\R^d)/\on{Diff}(S^1) \to \mc I^n(S^1,\R^d)/\mc D^n(S^1) = \mc B^n(S^1,\R^d)\,.
\end{equation}
While this map is not injective, we claim that the composition
\[
B_i(S^1,\R^d) = \on{Imm}(S^1,\R^d)/ \on{Diff}(S^1) \to \mc I^n(S^1,\R^d) / \mc D^n(S^1) = \mc B^n(S^1,\R^d)
\]
is injective. Indeed, let $c_1, c_2 \in \on{Imm}(S^1,\R^d)$ and $c_2 = c_1 \o \ph$ with $\ph \in \mc D^n(S^1)$. By reparametrizing we can assume that $|c_1'|= \tfrac{\ell_{c_1}}{2\pi}$ is constant. Then we obtain by differentiating, $|c_2'| = \tfrac{\ell_{c_1}}{2\pi} \ph'$, and thus $\ph \in \on{Diff}(S^1)$ showing that $\pi(c_1) = \pi(c_2)$ already in $B_i(S^1,\R^d)$. Since 
the inclusions in \eqref{eq:shape_dense} are dense and the projection \eqref{eq:sobolev_shape_proj} is surjective, it follows that $B_i(S^1,\R^d)$ as well as $B_{i,f}(S^1,\R^d)$ are dense in $\mc B^n(S^1,\R^d)$.

Let $G$ be a Sobolev metric of order $n\geq 2$ with constant coefficients. We have seen in Thm.~\ref{thm:immersion_weak_completion} that the induced geodesic distance on $\on{Imm}(S^1,\R^d)$ coincides with the restriction of the induced geodesic distance on $\mc I^n(S^1,\R^d)$. We claim that the quotient metric on $B_i(S^1,\R^d)$ also coincides with the restriction of the quotient metric on $\mc B^n(S^1,\R^d)$. To see that, let $C_1, C_2 \in B_i$ and $C_1 = \pi(c_1)$, $C_2 = \pi(c_2)$. Then
\begin{align*}
\on{dist}_{B_i}(C_1,C_2) 
&= \inf_{\ph \in \on{Diff}(S^1)} \on{dist}_{\on{Imm}}(c_1, c_2 \o \ph) \\
&= \inf_{\ph \in \on{Diff}(S^1)} \on{dist}_{\mc I}(c_1, c_2 \o \ph) \\
&= \inf_{\ph \in \mc D^n(S^1)} \on{dist}_{\mc I}(c_1, c_2 \o \ph) = \on{dist}_{\mc B}(C_1, C_2)\,.
\end{align*}
We are allowed to pass from $\on{Diff}(S^1)$ to $\mc D^n(S^1)$ in the infimum, because $\mc D^n(S^1)$ acts continuously on $\mc I^n(S^1,\R^d)$ and $\on{dist}_{\mc I}$ induces the manifold topology.

The Riemannian metric $G$ on $\on{Imm}_f(S^1,\R^d)$ induces a smooth Riemannian metric on $B_{i,f}(S^1,\R^d)$---and if one wants to consider Riemannian metrics on orbifolds, it also induces a Riemannian metric on $B_{i}(S^1,\R^d)$. The geodesic distance of this metric coincides with the quotient distance from Thm.~\ref{lem:project_complete_metric}; see \cite{Michor2007}. This leads to the following result.

\begin{theorem}
Let $n \geq 2$ and $G$ be a Sobolev metric of order $n$ with constant coefficients. Then the metric completion of $B_i(S^1,\R^d)$  with the induced geodesic distance is $\mc B^n(S^1,\R^d)$. The same holds for $B_{i,f}(S^1,\R^d)$.
\end{theorem}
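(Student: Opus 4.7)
The plan is to exhibit $\mc B^n(S^1,\R^d)$ as the metric completion of $B_i(S^1,\R^d)$ (and of $B_{i,f}(S^1,\R^d)$) by verifying the three characterizing properties of a metric completion: the ambient space is complete, the embedding is isometric, and the image is dense. Each ingredient has already been prepared in the paragraphs preceding the statement, so the proof largely amounts to assembling them correctly.

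Completeness of $(\mc B^n, \on{dist}_{\mc B})$ is the content of the preceding completeness theorem in this section (proved via Lem.~\ref{lem:project_complete_metric} from Thm.~\ref{thm:metric_completeness}), which I would invoke directly. For the isometric embedding, the discussion before the present theorem has already shown that the natural composition $B_i \to \mc B^n$ is injective---via the constant-speed reparametrization argument showing that any $\ph \in \mc D^n(S^1)$ for which both $c_1$ and $c_1 \o \ph$ lie in $\on{Imm}$ must satisfy $\ph \in \on{Diff}(S^1)$---and that
\[
\on{dist}_{B_i}(C_1,C_2) = \on{dist}_{\mc B}(C_1, C_2)
\]
holds for all $C_1, C_2 \in B_i$. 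This identity uses Thm.~\ref{thm:immersion_weak_completion} to replace $\on{dist}_{\on{Imm}}$ by $\on{dist}_{\mc I}|_{\on{Imm}}$, followed by a continuity argument exploiting the continuous $\mc D^n(S^1)$-action on $\mc I^n$ to enlarge the infimum from $\on{Diff}(S^1)$ to $\mc D^n(S^1)$. For density, the preceding paragraph has established that $B_{i,f} \subset B_i \subset \mc B^n$ are dense in the quotient topology; by Lem.~\ref{lem:project_complete_metric} this topology coincides with the one induced by $\on{dist}_{\mc B}$, so the inclusions are metrically dense as well.

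Once these three ingredients are in place, the standard characterization of the metric completion as the unique (up to isometry) complete metric space containing the given space isometrically as a dense subset identifies $\mc B^n$ as the completion of $B_i$. The argument for $B_{i,f}$ is identical, or alternatively follows by transitivity along the dense isometric chain $B_{i,f} \subset B_i \subset \mc B^n$. I do not expect a serious obstacle: the substantive work has already been carried out in Lem.~\ref{lem:project_complete_metric}, Thm.~\ref{thm:immersion_weak_completion}, and the preceding completeness theorem, so the present statement is essentially their natural payoff, assembled via one application of the completion characterization.
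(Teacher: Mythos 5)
Your proposal is correct and follows essentially the same route as the paper: it assembles the completeness of $(\mc B^n,\on{dist}_{\mc B})$, the isometric (and injective) inclusion $B_i \subset \mc B^n$ established in the preceding discussion, and the density of $B_{i,f} \subset B_i \subset \mc B^n$, then invokes the standard characterization of the metric completion. No gaps.
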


\begin{proof}
It was shown in Thm.~\ref{thm:sob_shape_complete} that $(\mc B^n, \on{dist}_{\mc B})$ is a complete metric space and we argued above that the inclusion $B_{i,f} \subset \mc B^n$ is isometric and dense. Hence $\mc B^n$ is the metric completion of $B_{i,f}$ and also of $B_i$.
\end{proof}

\appendix

\section{Geodesic distance on weak submanifolds}

We often encounter the following situation: let $G$ be a Sobolev metric of order $n\geq 2$ with constant coefficients and $m > n$. We can consider the Riemannian manifold $(\mc I^m(S^1,\R^d), G)$ and denote by $\on{dist}_{\mc I^m}$ the induced geodesic distance or we can look at the larger manifold $(\mc I^n(S^1,\R^d), G)$ with the geodesic distance $\on{dist}_{\mc I^n}$ and then restrict it to $\mc I^m(S^1,\R^d)$. Denote this restricted distance by $\on{dist}_{\mc I^n}\!|_{\mc I^m}$. What is the relationship between $\on{dist}_{\mc I^m}$ and $\on{dist}_{\mc I^n}\!|_{\mc I^m}$? It turns out that because $\mc I^m(S^1,\R^d)$ is dense in $\mc I^n(S^1,\R^d)$, they are the same. This allows us to talk of \emph{the} geodesic distance of a given Sobolev metric, without having to constantly reference the underlying space.

This is a more general phenomenon, that is best phrased using the notion of a weak submanifold, introduced in \cite{Eliasson1971}.

\begin{definition}
Let $M$, $M_0$ be manifolds modelled on convenient vector spaces. We call $M$ a weak submanifold of $M_0$, if around any point $x_0$ in the closure of $M$ in $M_0$, there exists a neighborhood $U_0$ in $M_0$ together with a chart $\ph_0 : U_0 \to \ph_0(U_0) \subseteq E_0$ for $M_0$ and a convenient vector space $E$, with a continuous inclusion $E \subseteq E_0$, such that the restriction of $\ph_0$ to $U = M \cap E_0$ is a chart $\ph : U \to \ph(U) = \ph_0(U_0) \cap E$ for $M$.
\end{definition}

We can show that for dense, weak submanifolds the restriction of the ambient geodesic distance coincides with the intrinsic one.

\begin{proposition}
\label{prop:geod_distance_submanifold}
Let $M_0$ be a separable Hilbert manifold, containing $M$ as a weak Hilbert submanifold, and $M$ be dense in $M_0$. Let $g_0$ be a weakly continuous Riemannian metric on $M_0$, i.e., the map $g_0 : TM_0 \x_{M_0} TM_0 \to \R$ is continuous, denote by $g$ its restriction to $M$ and by $d_0$ and $d$ the induced geodesic distances on $M_0$ and $M$ respectively. Then
\[
d_0|_M = d\,,
\]
i.e., the geodesic distance on $M_0$ restricted to $M$ coincides with the geodesic distance on $M$.
\end{proposition}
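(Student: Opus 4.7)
Plan. I prove the two inequalities $d_0|_M \leq d$ and $d \leq d_0|_M$ on $M$ separately. The direction $d_0|_M \leq d$ is immediate: any piecewise smooth path $\al:[0,1]\to M$ joining $x,y\in M$ is, via the continuous inclusion $M\hookrightarrow M_0$, also a piecewise smooth path in $M_0$, and $L_g(\al) = L_{g_0}(\al)$ since $g = g_0|_{TM}$; taking infima yields $d_0(x,y) \leq d(x,y)$.

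For the reverse inequality, fix $x,y\in M$, $\ep>0$, and a piecewise smooth $\ga:[0,1]\to M_0$ with endpoints $x,y$ and $L_{g_0}(\ga) < d_0(x,y) + \ep$. The plan is to build a piecewise smooth path $\eta$ in $M$ from $x$ to $y$ with $L_g(\eta) \leq L_{g_0}(\ga) + \ep$. By compactness of $\ga([0,1])$, cover it by finitely many weak-submanifold charts $\ph_0^{(j)}:U_0^{(j)}\to V_0^{(j)}\subseteq E_0^{(j)}$, in which $M$ corresponds to $V_0^{(j)}\cap E^{(j)}$, and refine the time partition $0 = t_0 < \cdots < t_N = 1$ so that each $\ga([t_{i-1},t_i])$ lies in a single chart, inside a small ball. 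Density of $M$ in $M_0$ translates, in each chart, to density of $V_0^{(j)}\cap E^{(j)}$ in $V_0^{(j)}$, so I pick $z_i \in M$ close (in the chart topology) to $\ga(t_i)$, keeping $z_0 = x$ and $z_N = y$. Within each chart I connect $z_{i-1}$ to $z_i$ by the straight segment $s\mapsto(1-s)z_{i-1}+sz_i$, which stays in $E^{(j)}$ by linearity and inside $V_0^{(j)}$ for fine enough mesh, hence corresponds to a smooth path in $M$; concatenating yields $\eta$.

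The crux is the length estimate $L_g(\eta) \leq L_{g_0}(\ga) + \ep$. By weak continuity of $g_0$, on a small ball around $\ga(t_{i-1})$ the number $g_0(v)(h_i,h_i)$ with $h_i = z_i - z_{i-1}$ is arbitrarily close to $g_0(\ga(t_{i-1}))(h_i,h_i)$, so the $g_0$-length of the $i$-th segment is well approximated by the fixed-form norm $\sqrt{g_0(\ga(t_{i-1}))(h_i,h_i)}$; by density of $E^{(j)}$, the vector $h_i$ is moreover arbitrarily close to $\ga(t_i)-\ga(t_{i-1})$ in $E_0^{(j)}$. The inscribed-polygon inequality in a fixed inner product together with a standard Riemann-sum argument bounds the sum of these fixed-form norms by $L_{g_0}(\ga)$ up to an error that vanishes as the partition is refined. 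Combining the bounds yields $L_g(\eta) \leq L_{g_0}(\ga) + \ep$, so $d(x,y) \leq d_0(x,y) + 2\ep$; letting $\ep\to 0$ concludes.

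The main obstacle I expect is handling three small errors simultaneously---partition refinement, density approximation of the $z_i$, and variation of $g_0$---uniformly across the finitely many charts, so that their cumulative contribution is at most $\ep$. An alternative route, closer in spirit to the construction hinted at in Rem.~\ref{rem:construct_P_by_hand}, is to smooth $\ga$ chart-by-chart via linear operators $P_k:E_0^{(j)}\to E^{(j)}$ converging pointwise to the identity on $E_0^{(j)}$, adjust the endpoints using $P_k(x)\to x$ in the topology of $M$, and invoke weak continuity of $g_0$ to obtain $L_g(P_k\circ\ga)\to L_{g_0}(\ga)$ directly; this bypasses the piecewise-linear bookkeeping at the price of requiring the additional structure of chart-wise smoothing operators.
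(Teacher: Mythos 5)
Your proposal is correct, but your primary route is genuinely different from the paper's. The paper proves $d\leq d_0$ by the argument you relegate to your closing ``alternative route'': after splitting $\ga$ into finitely many chart segments it applies globally defined linear smoothing operators $P_n:E_0\to E$ with $P_n v\to v$ uniformly on compact sets (whose existence, Lem.~\ref{lem:approx_dense_hilbert}, is obtained from the spectral theorem for the self-adjoint operator representing the $E$-inner product on $E_0$), deduces $L_g(P_n\ga)\to L_{g_0}(\ga)$ from weak continuity of $g_0$, and repairs the endpoints by linear interpolation in a weak chart. Your main argument instead inscribes a polygon: pick $z_i\in M$ near $\ga(t_i)$ by density, join them by chart-wise straight segments (which lie in $M$ by linearity and convexity of small balls), and control the length via weak continuity of $g_0$ and the estimate $\|\ga(t_i)-\ga(t_{i-1})\|_{g_0(\ga(t_{i-1}))}\leq\int_{t_{i-1}}^{t_i}\|\dot\ga(\ta)\|_{g_0(\ga(t_{i-1}))}\ud\ta$ together with uniform continuity of $(\ta,\si)\mapsto g_0(\ga(\si))(\dot\ga(\ta),\dot\ga(\ta))$ on each smooth piece. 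The bookkeeping you flag is real but resolvable in the order: fix the partition first (this fixes $N$), then extract the continuity moduli $\de_i$ at the finitely many points $(\ga(t_{i-1}),\De_i)$, then choose the $z_i$; one must also note that closeness of $z_i$ to $\ga(t_i)$ transfers between two overlapping charts via the continuous transition maps. What your route buys is elementarity --- it needs only density of $M$ in $M_0$ and joint continuity of $g_0$, with no appeal to the spectral theorem or to any linear structure of the approximation --- and it is therefore closer in spirit to what Rem.~\ref{rem:construct_P_by_hand} asks for when $M$ is not Hilbert; what the paper's route buys is a shorter, cleaner limit argument once the operators $P_n$ are in hand, since the whole curve is approximated at once and only the two endpoints need separate treatment.
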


\begin{proof}
Let $x, y \in M$. Since $d(x,y)$ is defined by taking the infimum over paths in $M$ and $d_0$ by taking the infimum over paths in $M_0$, we have the inequality
\[
d_0(x,y) \leq d(x,y)\,.
\]
To show the other inequality, denote by $E$ and $E_0$ the separable Hilbert spaces upon which $M$ and $M_0$ are modelled. Then $E \subseteq E_0$ is dense and by Lem.~\ref{lem:approx_dense_hilbert} we can choose a family $P_n$ of linear operators $P_n : E_0 \to E$ with the property that
\begin{equation}
\label{eq:approx_hypothesis}
\lim_{n \to \infty} \| P_n v - v \|_{E_0} = 0
\end{equation}
and the convergence is uniform on compact subsets.

Let $\ga$ be a piecewise smooth curve in $M_0$ connecting $x$ and $y$. Assume w.l.o.g. that $x$ and $y$ can both be covered by a weak chart for $M$; otherwise we split the curve into a finite number of segments and apply the argument to each one. 

So we can assume that $M_0 \subseteq E_0$ is an open subset and $M = M_0 \cap E$. Then
\[
L_g(P_n \ga) = L_{g_0}(P_n \ga) = \int_0^1 \left| P_n \dot \ga\right|_{P_n \ga} \ud t\,.
\]
Since $g_0$ is weakly continuous, the integrand converges uniformly to $|\dot \ga|_{\ga}$ and thus
\[
L_g(P_n \ga) \to L_{g_0}(\ga) \text{ for } n \to \infty\,.
\]

We want to note the following: if $x_n \to x$ in $M_0$ and both $x, x_n \in M$, then also $d(x_n, x) \to 0$. This is related to the fact that the topology induced by a Riemannian metric is weaker than the manifold topology and thus convergence in the manifold topology implies convergence in the metric; however in this case we start with convergence in the topology of the ambient manifold $M_0$, but want to obtain a statement about the metric $d$ on $M$. To show this we use linear interpolation in a weak chart around $x$,
\[
d(x_n, x) \leq \int_0^1 |x_n - x|_{tx_n + (1-t)x} \ud t \leq \int_0^1 |x_n - x|_x \ud t + \ep\,,
\]
and $\ep$ comes from the continuity of $g_0$ on $M_0$.

Starting from
\[
d(x,y) \leq d(x, P_n \ga(0)) + L_{g}(P_n \ga) + d(P_n \ga(1), \ga)\,,
\]
the convergence $P_n \ga(0) \to x$ in $M_0$ implies $d(x, P_n \ga(0)) \to 0$ and hence in the limit
\[
d(x,y) \leq L_{g_0}(\ga)\,.
\]
As $\ga$ was arbitrary, this implies $d(x,y) \leq d_0(x,y)$ as required.
\end{proof}

\begin{remark} 
\label{rem:construct_P_by_hand}
If in Prop.~\ref{prop:geod_distance_submanifold} the manifold $M$ is not a Hilbert manifold, but modelled only on a convenient vector space, then the statement still holds, provided we have a family of linear operators $P_n: E_0 \to E$ with the property \eqref{eq:approx_hypothesis}, i.e.,
\begin{equation*}
\lim_{n \to \infty} \| P_n v - v \|_{E_0} = 0\,,
\end{equation*}
and uniform convergence on compact subsets. For Hilbert manifolds such an approximating family always exists, as shown below in Lem.~\ref{lem:approx_dense_hilbert}. If we want to relax the assumptions, the family has to be constructed by hand.
\end{remark}

\begin{lemma}
\label{lem:approx_dense_hilbert}
Let $E, E_0$ be two separable Hilbert spaces, $E$ continuously and densly embedded in $E_0$. Then there exists a sequence of bounded operators $P_n : E_0 \to E$, such that
\[
\forall x \in E_0:\, P_n x \to x \text{ in } E_0\,;
\]
in other words, $P_n \in L(E_0, E_0)$ converges in the strong operator topology to $\on{Id}_{E_0}$ and $\on{im} P_n \subseteq E$. 

Furthermore the convergence is uniform on compact subsets of $E_0$.
\end{lemma}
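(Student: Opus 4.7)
The plan is to construct an orthonormal basis of $E_0$ (with respect to the $E_0$-inner product) whose elements all lie in $E$, and then let $P_n$ be the orthogonal projection of $E_0$ onto the span of the first $n$ basis vectors. The image of $P_n$ is then a finite-dimensional subspace of $E$, so $P_n$ automatically lands in $E$; and because $P_n$ is an orthogonal projection in $E_0$, we have $\| P_n \|_{E_0 \to E_0} \leq 1$, which will give the uniform estimates needed for uniform convergence on compacta.

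First I would produce a countable set $\{x_k\}_{k \in \mb N} \subseteq E$ whose span is dense in $E_0$: since $E_0$ is separable it has a countable dense subset $\{y_k\}$, and since $E$ is dense in $E_0$ we can, for each $k$, pick $x_k \in E$ with $\| x_k - y_k \|_{E_0} < 1/k$; then $\{x_k\} \subset E$ is still dense in $E_0$. Next, apply the Gram--Schmidt procedure in $E_0$ to $\{x_k\}$, discarding any vector that is linearly dependent on the previous ones. The resulting sequence $\{g_k\}_{k \in \mb N}$ consists of finite linear combinations of the $x_k$, hence lies in $E$, and by construction it is an orthonormal basis of $E_0$.

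Define
\[
P_n : E_0 \to E_0\,, \qquad P_n x \;=\; \sum_{k=1}^n \langle x, g_k\rangle_{E_0}\, g_k\,.
\]
The image of $P_n$ is the finite-dimensional subspace $F_n = \on{span}(g_1,\dots,g_n) \subseteq E$, and on the finite-dimensional space $F_n$ the norms induced by $E$ and by $E_0$ are equivalent, so $P_n$ is bounded as a map $E_0 \to E$. Since $\{g_k\}$ is an orthonormal basis of $E_0$, Parseval gives $P_n x \to x$ in $E_0$ for every $x \in E_0$, which is the pointwise convergence in \eqref{eq:approx_hypothesis}.

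It remains to upgrade pointwise convergence to uniform convergence on compact subsets of $E_0$. This is a routine $\ep$-net argument: given a compact $K \subseteq E_0$ and $\ep > 0$, cover $K$ by finitely many balls $B(x_i, \ep)$ with centres $x_i \in K$; for $x \in B(x_i,\ep)$ we have
\[
\| P_n x - x \|_{E_0} \leq \| P_n (x-x_i)\|_{E_0} + \| P_n x_i - x_i \|_{E_0} + \| x_i - x \|_{E_0} \leq 2\ep + \| P_n x_i - x_i\|_{E_0}\,,
\]
using $\| P_n \|_{E_0\to E_0} \leq 1$. Choosing $n$ large enough that $\| P_n x_i - x_i\|_{E_0} < \ep$ for each of the finitely many centres yields $\sup_{x \in K}\| P_n x - x\|_{E_0} \leq 3\ep$ for all such $n$, completing the proof. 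The only mildly delicate point is the initial construction of the basis $\{g_k\} \subset E$, and this is handled by the combined use of separability of $E_0$ and density of $E$ in $E_0$.
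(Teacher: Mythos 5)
Your proof is correct, and it takes a genuinely different route from the paper. The paper's argument invokes a representation theorem of Huet to produce an unbounded self-adjoint operator $A$ on $E_0$ with domain $D(A)=E$ satisfying $\langle v,w\rangle_E=\langle Av,Aw\rangle_{E_0}$, and then sets $P_n=P_{[-n,n]}$, the spectral projections of $A$; pointwise convergence, the inclusion $\on{im}P_n\subseteq E$, and the uniform bound $\|P_n\|_{L(E_0,E_0)}\leq 1$ all come from the spectral calculus. You instead use separability of $E_0$ together with density of $E$ to manufacture an orthonormal basis $\{g_k\}$ of $E_0$ lying inside $E$ via Gram--Schmidt, and take $P_n$ to be the finite-rank orthogonal projections onto $\on{span}(g_1,\dots,g_n)$; boundedness $E_0\to E$ is then automatic from equivalence of norms on a finite-dimensional space, Parseval gives strong convergence, and your $\ep$-net argument for uniformity on compacta (resting on $\|P_n\|_{L(E_0,E_0)}\leq 1$) is essentially the same final step the paper relies on. Your construction is more elementary --- no spectral theorem, no Huet --- and it delivers every property the lemma asserts and everything Prop.~\ref{prop:geod_distance_submanifold} and Rem.~\ref{rem:construct_P_by_hand} actually use; since $E$ is itself assumed separable, you could even shortcut the first step by orthonormalizing a countable $E$-dense subset of $E$, which is automatically $E_0$-dense. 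What the paper's route buys is a basis-free, canonical family of projections attached to the embedding $E\hookrightarrow E_0$ (they commute with $A$ and are compatible with the $E$-inner product as well), but none of that extra structure is needed here.
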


\begin{proof}
Given the two separable Hilbert spaces $E$ and $E_0$, the former continuously and densly embedded into the latter, \cite[Thm. 2.9]{Huet1976} shows the existance of an unbounded, self-adjoint operator $A : D(A) \to E_0$ with $D(A) = E$, representing the inner product,
\[
\langle v, w \rangle_{E} = \langle Av, Aw \rangle_{E_0} \text{ for } v, w \in E\,.
\]
Let $\{ P_\Om \}$ be the projection-valued measure associated to $A$. Then we have
\[
\lim_{n\to \infty} \|P_{[-n,n]} v - v\|_{E_0} = 0\,,
\]
and $P_{[-n,n]} v \in D(A) = E$. See, e.g., \cite[Sect. VIII.3]{Reed1980} for details.

To see that the convergence is uniform on compact subsets one uses that the operator norm of $P_{[-n,n]}$ is uniformly bounded.
\end{proof}

\providecommand{\href}[2]{#2}
\providecommand{\arxiv}[1]{\href{http://arxiv.org/abs/#1}{arXiv:#1}}
\providecommand{\url}[1]{\texttt{#1}}
\providecommand{\urlprefix}{URL }

\medskip
Received xxxx 20xx; revised xxxx 20xx.
\medskip


\begin{thebibliography}{10}

\bibitem{Adams2003}
\newblock R.~A. Adams,
\newblock \emph{Sobolev Spaces},
\newblock 2nd edition,
\newblock Academic Press, 2003.

\bibitem{Amann2000}
\newblock H.~Amann,
\newblock Compact embeddings of vector-valued {S}obolev and {B}esov spaces,
\newblock \emph{Glas. Mat. Ser. III}, \textbf{35(55)} (2000), 161--177,
\newblock Dedicated to the memory of Branko Najman.

\bibitem{Bauer2012d}
\newblock M.~Bauer, P.~Harms and P.~W. Michor,
\newblock Sobolev metrics on shape space, {II}: Weighted {S}obolev metrics and
  almost local metrics,
\newblock \emph{J. Geom. Mech.}, \textbf{4} (2012), 365 -- 383.

\bibitem{Bauer2012c}
\newblock M.~Bauer, M.~Bruveris, P.~Harms and P.~W. Michor,
\newblock Vanishing geodesic distance for the {R}iemannian metric with geodesic
  equation the {K}d{V}-equation,
\newblock \emph{Ann. Global Anal. Geom.}, \textbf{41} (2012), 461--472,
\newblock \urlprefix\url{http://dx.doi.org/10.1007/s10455-011-9294-9}.

\bibitem{Bauer2014c}
\newblock M.~Bauer, M.~Bruveris, S.~Marsland and P.~W. Michor,
\newblock Constructing reparameterization invariant metrics on spaces of plane
  curves,
\newblock \emph{Differential Geom. Appl.}, \textbf{34} (2014), 139--165,
\newblock \urlprefix\url{http://dx.doi.org/10.1016/j.difgeo.2014.04.008}.

\bibitem{Bauer2014b}
\newblock M.~Bauer, M.~Bruveris and P.~Michor,
\newblock {$R$}-transforms for {S}obolev {$H^2$}-metrics on spaces of plane
  curves,
\newblock \emph{Geom. Imaging Comput.}, \textbf{1}.

\bibitem{Bauer2014}
\newblock M.~Bauer, M.~Bruveris and P.~Michor,
\newblock Overview of the geometries of shape spaces and diffeomorphism groups,
\newblock \emph{Journal of Mathematical Imaging and Vision}, 1--38,
\newblock \urlprefix\url{http://dx.doi.org/10.1007/s10851-013-0490-z}.

\bibitem{Bauer2014_preprint}
\newblock M.~Bauer and P.~Harms,
\newblock Metrics on spaces of immersions where horizontality equals normality,
  2014.

\bibitem{Bauer2011b}
\newblock M.~Bauer, P.~Harms and P.~W. Michor,
\newblock Sobolev metrics on shape space of surfaces,
\newblock \emph{J. Geom. Mech.}, \textbf{3} (2011), 389--438.

\bibitem{Bauer2013c}
\newblock M.~Bauer, P.~Harms and P.~W. Michor,
\newblock Sobolev metrics on the manifold of all {R}iemannian metrics,
\newblock \emph{J. Differential Geom.}, \textbf{94} (2013), 187--208,
\newblock \urlprefix\url{http://projecteuclid.org/euclid.jdg/1367438647}.

\bibitem{Bruveris2014}
\newblock M.~Bruveris, P.~W. Michor and D.~Mumford,
\newblock Geodesic completeness for {S}obolev metrics on the space of immersed
  plane curves,
\newblock \emph{Forum Math. Sigma}, \textbf{2} (2014), e19 (38 pages),
\newblock \urlprefix\url{http://dx.doi.org/10.1017/fms.2014.19}.

\bibitem{Bruveris2014_preprint}
\newblock M.~Bruveris and F.-X. Vialard,
\newblock On completeness of groups of diffeomorphisms, 2014.

\bibitem{Burago2001}
\newblock D.~Burago, Y.~Burago and S.~Ivanov,
\newblock \emph{A course in metric geometry}, vol.~33 of Graduate Studies in
  Mathematics,
\newblock American Mathematical Society, Providence, RI, 2001.

\bibitem{Burtscher2013_preprint}
\newblock A.~Burtscher,
\newblock Length structures on manifolds with continuous {R}iemannian metrics,
  2013.

\bibitem{Michor1991}
\newblock V.~Cervera, F.~Mascar{\'o} and P.~W. Michor,
\newblock The action of the diffeomorphism group on the space of immersions,
\newblock \emph{Differential Geom. Appl.}, \textbf{1} (1991), 391--401.

\bibitem{Charpiat2007}
\newblock G.~Charpiat, P.~Maurel, J.-P. Pons, R.~Keriven and O.~Faugeras,
\newblock Generalized gradients: priors on minimization flows,
\newblock \emph{Int. J. Comput. Vision}, \textbf{73} (2007), 325--344,
\newblock \urlprefix\url{http://dx.doi.org/10.1007/s11263-006-9966-2}.

\bibitem{Ebin1970b}
\newblock D.~G. Ebin,
\newblock The manifold of {R}iemannian metrics,
\newblock in \emph{Global {A}nalysis ({P}roc. {S}ympos. {P}ure {M}ath., {V}ol.
  {XV}, {B}erkeley, {C}alif., 1968)},
\newblock Amer. Math. Soc., Providence, R.I., 1970,
\newblock 11--40.

\bibitem{Ebin1970}
\newblock D.~G. Ebin and J.~Marsden,
\newblock Groups of diffeomorphisms and the motion of an incompressible fluid.,
\newblock \emph{Ann. of Math. (2)}, \textbf{92} (1970), 102--163.

\bibitem{Eells1966}
\newblock J.~Eells~Jr.,
\newblock A setting for global analysis,
\newblock \emph{Bull. Amer. Math. Soc.}, \textbf{72} (1966), 751--807.

\bibitem{Eliasson1971}
\newblock H.~I. El{\'{\i}}asson,
\newblock Condition {(C)} and geodesics on {S}obolev manifolds,
\newblock \emph{Bull. Amer. Math. Soc.}, \textbf{77} (1971), 1002--1005.

\bibitem{GayBalmaz2009}
\newblock F.~Gay-Balmaz,
\newblock Well-posedness of higher dimensional {C}amassa-{H}olm equations,
\newblock \emph{Bull. Transilv. Univ. Bra\c sov Ser. III}, \textbf{2(51)}
  (2009), 55--58.

\bibitem{Hamilton1982}
\newblock R.~S. Hamilton,
\newblock The inverse function theorem of {N}ash and {M}oser,
\newblock \emph{Bull. Amer. Math. Soc. (N.S.)}, \textbf{7} (1982), 65--222,
\newblock \urlprefix\url{http://dx.doi.org/10.1090/S0273-0979-1982-15004-2}.

\bibitem{Huet1976}
\newblock D.~Huet,
\newblock \emph{D\'ecomposition spectrale et op\'erateurs},
\newblock Presses Universitaires de France, Paris, 1976,
\newblock Le Math{\'e}maticien, No. 16.

\bibitem{Inci2013}
\newblock H.~Inci, T.~Kappeler and P.~Topalov,
\newblock On the regularity of the composition of diffeomorphisms,
\newblock \emph{Mem. Amer. Math. Soc.}, \textbf{226} (2013), vi+60,
\newblock \urlprefix\url{http://dx.doi.org/10.1090/S0065-9266-2013-00676-4}.

\bibitem{Jones1964}
\newblock G.~S. Jones,
\newblock Fundamental inequalities for discrete and discontinuous functional
  equations,
\newblock \emph{J. Soc. Indust. Appl. Math.}, \textbf{12} (1964), 43--57.

\bibitem{Kappeler2008}
\newblock T.~Kappeler, E.~Loubet and P.~Topalov,
\newblock Riemannian exponential maps of the diffeomorphism groups of {$\Bbb
  T^2$},
\newblock \emph{Asian J. Math.}, \textbf{12} (2008), 391--420,
\newblock \urlprefix\url{http://dx.doi.org/10.4310/AJM.2008.v12.n3.a7}.

\bibitem{Klingenberg1995}
\newblock W.~P.~A. Klingenberg,
\newblock \emph{Riemannian geometry}, vol.~1 of de Gruy\-ter Studies in
  Mathematics,
\newblock 2nd edition,
\newblock Walter de Gruyter {\&} Co., Berlin, 1995,
\newblock \urlprefix\url{http://dx.doi.org/10.1515/9783110905120}.

\bibitem{Kodama2006}
\newblock H.~Kodama and P.~W. Michor,
\newblock The homotopy type of the space of degree 0-immersed plane curves,
\newblock \emph{Rev. Mat. Complut.}, \textbf{19} (2006), 227--234,
\newblock \urlprefix\url{http://dx.doi.org/10.5209/rev_REMA.2006.v19.n1.16660}.

\bibitem{Kouranbaeva1999}
\newblock S.~Kouranbaeva,
\newblock The {C}amassa-{H}olm equation as a geodesic flow on the
  diffeomorphism group,
\newblock \emph{J. Math. Phys.}, \textbf{40} (1999), 857--868,
\newblock \urlprefix\url{http://dx.doi.org/10.1063/1.532690}.

\bibitem{Michor1997}
\newblock A.~Kriegl and P.~W. Michor,
\newblock \emph{The convenient setting of global analysis}, vol.~53 of
  Mathematical Surveys and Monographs,
\newblock American Mathematical Society, Providence, RI, 1997.

\bibitem{Lang1999}
\newblock S.~Lang,
\newblock \emph{Fundamentals of differential geometry}, vol. 191 of Graduate
  Texts in Mathematics,
\newblock Springer-Verlag, New York, 1999,
\newblock \urlprefix\url{http://dx.doi.org/10.1007/978-1-4612-0541-8}.

\bibitem{Mennucci2008}
\newblock A.~Mennucci, A.~Yezzi and G.~Sundaramoorthi,
\newblock Properties of {S}obolev-type metrics in the space of curves,
\newblock \emph{Interfaces Free Bound.}, \textbf{10} (2008), 423--445.

\bibitem{Michor2005}
\newblock P.~W. Michor and D.~Mumford,
\newblock Vanishing geodesic distance on spaces of submanifolds and
  diffeomorphisms,
\newblock \emph{Doc. Math.}, \textbf{10} (2005), 217--245.

\bibitem{Michor2006c}
\newblock P.~W. Michor and D.~Mumford,
\newblock Riemannian geometries on spaces of plane curves,
\newblock \emph{J. Eur. Math. Soc. (JEMS)}, \textbf{8} (2006), 1--48,
\newblock \urlprefix\url{http://dx.doi.org/10.4171/JEMS/37}.

\bibitem{Michor2007}
\newblock P.~W. Michor and D.~Mumford,
\newblock An overview of the {R}iemannian metrics on spaces of curves using the
  {H}amiltonian approach,
\newblock \emph{Appl. Comput. Harmon. Anal.}, \textbf{23} (2007), 74--113,
\newblock \urlprefix\url{http://dx.doi.org/10.1016/j.acha.2006.07.004}.

\bibitem{Vialard2014_preprint}
\newblock G.~Nardi, G.~Peyr{\'e} and F.-X. Vialard,
\newblock Geodesics on shape spaces with bounded variation and {S}obolev
  metrics, 2014.

\bibitem{NIST2010}
\newblock Frank W.~J. Olver, Daniel~W. Lozier, Ronald~F. Boisvert and
  Charles~W. Clark (eds.),
\newblock \emph{N{IST} Handbook of Mathematical Functions},
\newblock U.S. Department of Commerce National Institute of Standards and
  Technology, Washington, DC, 2010.

\bibitem{Pachpatte1998}
\newblock B.~G. Pachpatte,
\newblock \emph{Inequalities for differential and integral equations}, vol. 197
  of Mathematics in Science and Engineering,
\newblock Academic Press Inc., San Diego, CA, 1998.

\bibitem{Palais1968}
\newblock R.~S. Palais,
\newblock \emph{Foundations of global non-linear analysis},
\newblock W. A. Benjamin, Inc., New York-Amsterdam, 1968.

\bibitem{Reed1980}
\newblock M.~Reed and B.~Simon,
\newblock \emph{Methods of modern mathematical physics. {I}},
\newblock 2nd edition,
\newblock Academic Press, Inc., New York, 1980,
\newblock Functional analysis.

\bibitem{Rumpf2012_preprint}
\newblock M.~Rumpf and B.~Wirth,
\newblock Variational time discretization of geodesic calculus, 2012.

\bibitem{Shah2008}
\newblock J.~Shah,
\newblock {$H\sp 0$}-type {R}iemannian metrics on the space of planar curves,
\newblock \emph{Quart. Appl. Math.}, \textbf{66} (2008), 123--137.

\bibitem{Shah2013}
\newblock J.~Shah,
\newblock An {$H^2$} {R}iemannian metric on the space of planar curves modulo
  similitudes,
\newblock \emph{Adv. in Appl. Math.}, \textbf{51} (2013), 483--506,
\newblock \urlprefix\url{http://dx.doi.org/10.1016/j.aam.2013.06.003}.

\bibitem{Smolentsev2006}
\newblock N.~K. Smolentsev,
\newblock Diffeomorphism groups of compact manifolds,
\newblock \emph{Sovrem. Mat. Prilozh.}, 3--100,
\newblock \urlprefix\url{http://dx.doi.org/10.1007/s10958-007-0471-0}.

\bibitem{Srivastava2011}
\newblock A.~Srivastava, E.~Klassen, S.~H. Joshi and I.~H. Jermyn,
\newblock Shape analysis of elastic curves in {E}uclidean spaces,
\newblock \emph{IEEE T. Pattern Anal.}, \textbf{33} (2011), 1415--1428.

\bibitem{Mennucci2007}
\newblock G.~Sundaramoorthi, A.~Yezzi and A.~C. Mennucci,
\newblock Sobolev active contours,
\newblock \emph{Int. J. Comput. Vision}, \textbf{73} (2007), 345--366.

\bibitem{Yosida1980}
\newblock K.~Yosida,
\newblock \emph{Functional analysis}, vol. 123 of Grundlehren der
  Mathematischen Wissenschaften [Fundamental Principles of Mathematical
  Sciences],
\newblock Sixth edition,
\newblock Springer-Verlag, Berlin, 1980.

\bibitem{Michor2008a}
\newblock L.~Younes, P.~W. Michor, J.~Shah and D.~Mumford,
\newblock A metric on shape space with explicit geodesics,
\newblock \emph{Atti Accad. Naz. Lincei Cl. Sci. Fis. Mat. Natur. Rend. Lincei
  (9) Mat. Appl.}, \textbf{19} (2008), 25--57.

\end{thebibliography}
\end{document}